\providecommand{\keywords}[1]{\textbf{Keywords:} #1}
\providecommand{\subjclass}[1]{\textbf{2010 
     Mathematics Subject Classification:} #1}
\providecommand{\emails}[1]{\textbf{E-mails:} #1}
\newcommand{\newatop}[2]{\genfrac{}{}{0pt}{}{#1}{#2}}
\numberwithin{equation}{section}
\newcommand{\be}{\begin{equation}}
\newcommand{\ee}{\end{equation}}
\newtheorem{theorem}{Theorem}[section]
\newtheorem{corollary}[theorem]{Corollary}
\newtheorem{lemma}[theorem]{Lemma}
\newtheorem{proposition}[theorem]{Proposition}
\newtheorem{remark}[theorem]{Remark}
\newtheorem{definition}[theorem]{Definition} }
\newcommand{\csect}[1]{Section~\ref{#1}}
\newcommand{\capp}[1]{Appendix~\ref{#1}}
\newcommand{\cthm}[1]{Theorem~\ref{#1}}
\newcommand{\cprop}[1]{Proposition~\ref{#1}}
\newcommand{\clem}[1]{Lemma~\ref{#1}}
\newcommand{\ccor}[1]{Corollary~\ref{#1}}
\newcommand{\cdef}[1]{Definition~\ref{#1}}
\newcommand{\crem}[1]{Remark~\ref{#1}}
\newenvironment{proof}{\noindent\textbf{Proof:} }
   {\hfill {\quad$\blacksquare$}\par\medskip}
\newenvironment{proofof}[1]{\medskip\noindent
   \textbf{Proof of #1:} }{\hfill {\quad$\blacksquare$}\par\medskip}
\def\ds{\displaystyle}
    \def\NN{\mathbb{N}}
    \def\RR{\mathbb{R}}
    \def\bbn{\mathbb{N}}
    \def\bbr{\mathbb{R}}
    \def\bbz{\mathbb{Z}}
\def\supp{\mathop{\rm supp}}
\def\A{{\cal A}}
\def\N{{\mathcal N}}
\def\H{{\mathcal H}}
\def\P{{\mathcal P}}
\def\Q{{\mathcal Q}}
\def\W{{\mathcal W}}
\def\R{{\mathcal R}}
\def\X{{\mathcal X}}
\def\a{\alpha}
\def\J{{\cal J}}
\def\Ah{\widehat A}\def\Bh{\widehat B}\def\Ch{\widehat C}
\def\mh{\widehat m}\def\Ph{\widehat P}\def\Qh{\widehat Q}
\def\mt{\tilde m}
\def\tt{\tilde t}\def\Tt{\tilde T}\def\Pt{\tilde P}
\def\Mt{\tilde M}\def\Qt{\tilde Q}\def\gt{\tilde g}
\def\Ft{\tilde F}\def\Gt{\tilde G}\def\Phit{\tilde\Phi}
\def\mt{\widetilde m}
\def\tt{\widetilde t}\def\Tt{\widetilde T}\def\Pt{\widetilde P}
\def\Mt{\widetilde M}\def\Qt{\widetilde Q}\def\gt{\widetilde g}
\def\Ft{\widetilde F}\def\Gt{\widetilde G}\def\Phit{\widetilde\Phi}
\def\myitem#1{\noindent\hbox to 30pt{\hfill#1}}
\def\v{{\bf v}}
\begin{document}

\title{ The Truncated Moment Problem on $\mathbb{N}_0$}

\author{M. Infusino\footnote{Corresponding author.\newline
\emails{infusino.maria@gmail.com (M. Infusino),
 t.kuna@reading.ac.uk (T. Kuna), lebowitz@math.rutgers.edu (J. L. Lebowitz),
 speer@math.rutgers.edu (E. R. Speer).}} 
 \footnote{Department of Mathematics and Statistics,
University of Konstanz, Universit\"atsstrasse 10,
78457, Konstanz, Germany.}, 
 T. Kuna\footnote{Department of Mathematics and Statistics,
      University of Reading, Whiteknights, PO Box 220, Reading RG6 6AX, UK.},
J. L. Lebowitz\footnote{Department of Mathematics,
Rutgers University, New Brunswick, NJ 08903.},
\footnote{Also Department of Physics, Rutgers.}\ \
and E. R. Speer\footnotemark[4]}
\maketitle

\begin{abstract} We find necessary and sufficient conditions for the
existence of a probability measure on  $\mathbb{N}_0$, the nonnegative integers,
whose first $n$ moments are a given $n$-tuple of nonnegative real
numbers.  The results, based on finding an optimal polynomial of degree
$n$ which is nonnegative on  $\mathbb{N}_0$ (and which depends on the moments),
and requiring that its expectation be nonnegative, generalize previous
results known for $n=1$, $n=2$ (the Percus-Yamada condition), and
partially for $n=3$.  The conditions for realizability are given
explicitly for $n\leq5$ and in a finitely computable form for $n\geq6$.  We
also find, for all $n$, explicit bounds, in terms of the moments, whose
satisfaction is enough to guarantee realizability. Analogous results are
given for the truncated moment problem on an infinite discrete
semi-bounded subset of $\mathbb{R}$.\end{abstract}
\small{
\noindent\keywords{truncated moment problem; discrete moment problem; realizability}

\noindent\subjclass{44A60, 11C08, 46N55, 47A57}
}
\normalsize

\section{Introduction\label{intro}}

  In this paper we address the following question: given a positive
integer $n$ and an $n$-tuple $m=(m_1, m_2,\ldots,m_n)$ of real numbers,
does there exist a probability measure $\mu$ on $\bbn_0$, the set of
nonnegative integers, with the $m_k$'s as moments: $E_\mu[X^k]=m_k$, where
$X$ is the identity random variable $X(i)=i$ on $\NN_0$?  Specifically,
we wish to give necessary and sufficient conditions on $m$ for this
realizability, in as simple a form as possible.  For $j\le n$ we write
$m^{(j)}:=(m_1,\ldots,m_j)$; thus $m$ may be written as $m^{(n)}$ and we
will use this notation when we want to emphasize the number of moments to
be realized.  We write $m_0=1$, so that $E_\mu[X^k]=m_k$ for $k=0$ as well
as $k=1,\ldots n$.

This problem is a special case of the truncated (power) moment problem,
in which one asks whether or not $k$ given numbers (or vectors, or
functions) can be realized as the first $k$ moments of some random
variable (or random vector, or random process) $X$ whose support lies in
a specified set or space $\X$ (see e.g. \cite{Lau08}, \cite[Chap.
III]{LasBook} for more details and references).  The main challenge in
this area is to identify relevant and practically checkable conditions
for realizability.  Our results answer this question for the case
$\X=\bbn_0$. This problem is very natural in many situations; for
example, $X$ could count the number of atoms in a container or the number
of snakes in a pit.

We note here that in some cases one might, instead of specifying the
numerical values of the $m_j$'s, specify some relations
$m_j\geq f_j(m_1,\ldots,m_{j-1})$, and ask whether such relations are
compatible with the realizability of $m^{(j)}$ on $\bbn_0$. This occurs,
for example, in a more general form in the classical theory of fluids.
There the pair correlation function is given by some approximation
schemes (Percus-Yevick, hyper-netted chain, etc.) as a function of the
density, or the three-body correlation function is given as a function of
the one and the two particle correlations (e.g., in the superposition
approximation).  Motivated by this, in the previous works \cite{CKLS06,
KLS07, KLS11} the truncated moment problem for $X$ a point
process on a subset $\Lambda$ of $\bbz^d$ or $\bbr^d$ was considered. If, instead of the
full point process $X$, one takes the random variable given by the number
of points in a fixed volume of $\bbz^d$ or $\bbr^d$ then the problem
reduces to the truncated moment problem on $\bbn_0$ considered in the
current work.  The sufficiency bounds given in Section~\ref{sufficient}
may be useful for realizability for point processes.

The case in which the support $\X$ is a discrete subset of $\bbr$ and $X$
is the identity random variable is often called the \emph{discrete moment
problem}.  For finite $\X$, e.g.  $\X=\{0,1,\ldots, N\}\subseteq\bbn_0$,
the problem has been extensively studied in connection with the problem of
computing bounds for the probability that a certain number of events occurs
in systems where only a few moments are known (see e.g.  \cite{Kwerel1,
Kwerel2, Prek-Bor89, Prek90, Prek-Gao02}).  When $\X$ is an infinite
discrete set the problem has been considered by Karlin and Studden
\cite[Chapter VII]{Karl-Studd}. In particular, they characterize
the cone of all $n$-tuples realizable on $\X=\bbn_0\cup\{+\infty\}$ for the
generalized moment problem (Tchebycheff systems), using techniques from
convex geometry which since have become standard in moment theory (see also
\cite{Karl-Shapl, KN77}).  The specific choice $\X=\bbn_0$ is also
considered in \cite[Sect. 8, Chap.  VII]{Karl-Studd}, but the technique
used by the authors characterizes the cone of all $n$-tuples realizable on
$\bbn_0$ only up to an unknown parameter and hence does not provide a
collection of necessary and sufficient conditions for realizability. To our
knowledge the present paper contains the first computable necessary and
sufficient realizability conditions for the truncated power moment problem,
with arbitrary degree $n$, on $\bbn_0$. These results are here extended
also to any infinite discrete semi-bounded subset of $\bbr$.

For the related {\it truncated Stieltjes moment problem}, in which
$\X=\bbr_+$, explicit necessary and sufficient conditions for
realizability are known \cite{CF91}.  Earlier works (see e.g. \cite{Akh},
\cite{Karl-Shapl}, \cite[Chapter V]{Karl-Studd}, \cite{KN77}, \cite[p.\!\! 28
ff.]{Sh-Tam43}) did not provide such explicit conditions, due to the same
technical restrictions present, e.g., in the work of Karlin and Studden for
the truncated moment problem on $\bbn_0$.  Let us reinterpret now the
results of \cite{CF91} in an inductive form (obtained in \capp{CFreal};
see in particular \ccor{needed}) which is parallel to our treatment of
the discrete case: for each $j=1,\ldots,n$ we state conditions which are
necessary and sufficient for the realizability of
$m^{(j)}:=(m_1,\ldots,m_j)$, {\it given} the realizability of $m^{(j-1)}$
(which is in itself clearly a necessary condition).  At each stage we
distinguish two types of realizability: {\it I-realizability}, in which
$m^{(j)}$ lies in the interior of the set of realizable moment vectors,
and {\it B-realizability}, in which $m^{(j)}$ lies on the boundary of
this set.  If $m^{(j-1)}$ is B-realizable, then $m^{(j)}$ is realizable
if and only if $m_j$ takes a certain unique value, computable from
$m^{(j-1)}$, and then must be B-realizable; $m^{(j)}$ cannot be
I-realizable.  If $m^{(j-1)}$ is I-realizable, then realizability of
$m^{(j)}$ is determined by the {\it Hankel matrix} $C_j$, where
$C_j=A(k)$ if $j=2k$ and $C_j=B(k)$ if $j=2k+1$; here for $k\ge0$ the
$(k+1)\times(k+1)$ Hankel matrices are
 \be\label{hankel}
 A(k):=(m_{i+j})_{i,j=0}^{k} , \qquad  B(k):=(m_{i+j+1})_{i,j=0}^{k}.
 \ee
 Specifically, $m^{(j)}$ is I-realizable if $C_j$ is positive definite
($C>0)$ and is B-realizable if $C_j$ is positive semidefinite ($C\ge0$)
but not positive definite.

We note that it is easy to see the relevance of the Hankel matrix for the
Stieltjes problem, and in fact to see that realizability of $m^{(n)}$
requires that $C_j\ge0$ for $j=1,\ldots,n$.  For if $2k\le n$,
$X_k=(1,X,\ldots,X^k)$, and $\nu$ realizes $m^{(n)}$ on $\bbr_+$, then for
any $Q_k=(q_0, q_1,\ldots,q_{k})\in\bbr^{k+1}$, 
 \be
E_\nu[(Q_k\cdot X_k)^2]=E_\nu\left[\left(\sum_{i=0}^{k}q_i X^i\right)^2\right]=Q_k^TA(k)Q_k=Q_k^TC_{2k}Q_k\label{pdef}
 \ee
  and, if $2k+1\le n$,
 \be
 E_\nu[X(Q_k\cdot X_k)^2]=Q_k^TB(k)Q_k=Q_k^TC_{2k+1}Q_k, \label{pdef2}
 \ee
  must both be nonnegative.  Obtaining sufficient conditions \cite{CF91} is
considerably more complicated.

The approach in \cite{CF91} can be extended to give necessary and
sufficient conditions for the truncated moment problem on $\X\subset\RR$
where $\X$ is defined by a finite number of polynomial inequalities, but
the technique becomes more complex as the number of polynomials defining
$\X$ increases. Since defining $\bbn_0$ in this way requires an infinite
number of polynomial constraints, it is not clear how to apply the method
to this case; a non-trivial modification seems to be necessary.  In the
present paper we introduce a new technique to get realizability
conditions for the case $\X=\bbn_0$, based on an infinite family of
polynomials which are different from the squares of polynomials used in
\cite{CF91} for the case $\X=\bbr_+$ (see~\eqref{pdef}
and~\eqref{pdef2}).

On the other hand, in structure our approach to the $\X=\bbn_0$ problem
is strictly parallel to our reinterpretation of the results about the
truncated Stieltjes moment problem given above.  We use the same
inductive procedure, and introduce the same notion of I- and
B-realizability on $\bbn_0$.  Again, if $m^{(j-1)}$ is B-realizable then
$m^{(j)}$ is B-realizable if and only $m_j$ takes a specific value, and
B-realizability is the only possibility.  The new element enters when
$m^{(j-1)}$ is I-realizable.  In this case we prove the existence of a
polynomial $P^{(m)}_j(x)=\sum_{i=0}^jp_ix^i$, which we take to be monic
($p_j=1$), such that for any $\mu$ that realizes $m^{(n)}$ on $\bbn_0$,
$E_\mu[P_j(X)]$ is minimized over all monic polynomials $P_j(x)$ of
degree $j$, nonnegative on $\bbn_0$, by $P_j(x)=P_j^{(m)}(x)$.  We then
show that $m^{(j)}$ is realizable if and only if
 \be\label{fund} E_\mu[P^{(m)}_j(X)]=\sum_{i=0}^jp_im_i\ge0,
 \ee
 with strict inequality (respectively equality) in \eqref{fund}
corresponding to I-realizability (respectively B-realizability).
Condition \eqref{fund} thus plays somewhat the role of positive
semidefiniteness of the Hankel matrices in the Stieltjes theory.

The remaining problem is to find explicitly the polynomials $P_n^{(m)}$.
Here the $n=1$ case is trivial and the solution for $n=2$ goes back to the
work of Percus and Yamada \cite{PercusYevick,Percus,Yamada} in
statistical mechanics:
 \begin{eqnarray}
P_1^{(m)}(x)&:=&x,\nonumber\\
P_2^{(m)}(x)&:=&(x-k)(x-(k+1)),\quad\text{with}\quad
   k=\lfloor m_1\rfloor. \nonumber
 \end{eqnarray}
 The condition for realizability when $n=1$ is thus $m_1\ge0$, and when
$n=2$ is
 \be
m_2-m_1^2\ge\theta(1-\theta), \qquad
  \theta=m_1-\lfloor m_1\rfloor, 
 \ee
 which is known as the (Percus)-Yamada condition in the statistical
mechanics literature. The similar condition (\eqref{Y3} below) in the
$n=3$ case can be derived from \cite{Kwerel2,Prek-Bor89}. The above conditions are necessary. In the current work we additionally show that they are also sufficient for realizability on~$\bbn_0$.

The construction of $P_n^{(m)}$ is considerably more complicated for
$n\ge4$. In the current work we give explicit constructions in the cases
$n=4$ and $n=5$, and a reasonably efficient recursive procedure for
larger values of $n$.
 
The remainder of the paper is organized as follows.  In \csect{OnN} we
establish necessary and sufficient conditions on $m^{(n)}$ for
realizability on the set $\bbn_N=\{0,1,2,\ldots,N\}$; we are interested
only in large $N$ and always assume that $N\ge n$.  The conditions will
consist of the nonnegativity of a certain set of $O(N^n)$ affine functions
of $m$.  In \csect{OnN0} we give necessary and sufficient conditions for
realizability on $\bbn_0$; these are nonnegativity conditions as for
$\bbn_N$, but there are now an infinite number.  In \csect{finite} we
describe the classification of realizable moment vectors as I- or
B-realizable and introduce the key polynomials $P_n^{(m)}$.
Sections~\ref{simple} and \ref{CaseFourge} are devoted to obtaining these
polynomials: for $n=1$, $2$, and $3$ in \csect{simple} and for $n\ge4$ in
\csect{CaseFourge}, with a recursive procedure for general $n$ described
in \csect{subsecind} and the explicit formulas for $n=4$ and $n=5$ in
\csect{subsecn4}.  In \csect{sufficient} we discuss a sufficient
condition for realizability on $\bbn_0$. In \csect{general} we consider
the problem of realizing given moments on an arbitrary infinite discrete
subset of $\bbr_+$.  Certain technical discussions are relegated to two
appendices.
 
\section{Realizability on $\bbn_N$}\label{OnN}

In this section we establish necessary and sufficient conditions for
realizability of a moment vector $m^{(n)}$ by a probability measure on
$\bbn_N=\{0,1,\ldots,N\}$, where $N\ge n$, using similar techniques to
the ones in \cite{Karl-Studd}. Note that such techniques were used also
in \cite{Percus} for the realizability problem for point processes in the
case $n=2$. We begin with a geometrical lemma.

\begin{lemma}\label{planes} Let $S$ be a finite subset of $\bbr^n$ which is
not contained in any $n-1$ dimensional hyperplane.  Then the convex hull of
$S$ has the form $\bigcap_{H\in\H}H$, where $\H$ is the family of all
closed half spaces $H$ containing $S$ whose bounding hyperplane
$\partial H$ contains (at least) $n$ points of $S$ which do not belong to
any $n-2$ dimensional affine subset of $\bbr^n$.  Moreover, this
representation is minimal: no half space may be omitted from the
intersection.\end{lemma}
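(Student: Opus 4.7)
The plan is to identify $\bigcap_{H\in\H}H$ with $\mathop{\rm conv}(S)$ via the classical facet description of a polytope. Let $P:=\mathop{\rm conv}(S)$. Since $S$ is finite, $P$ is a convex polytope whose vertex set is a subset of $S$; the hypothesis that $S$ is not contained in any $(n-1)$-dimensional hyperplane guarantees that $P$ is $n$-dimensional. Every $H\in\H$ contains $S$ and hence contains $P$, so $P\subseteq\bigcap_{H\in\H}H$.

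The key step is to show that $\H$ coincides with the family $\{H_F:F\text{ a facet of }P\}$, where a facet is an $(n-1)$-dimensional face of $P$ and $H_F$ is the unique closed half-space bounded by the affine hull of $F$ and containing $P$. One direction is straightforward: if $F$ is a facet, its vertices are vertices of $P$ and hence lie in $S$, and since $\dim F=n-1$ they include $n$ affinely independent points, all lying on $\partial H_F$, so $H_F\in\H$. For the converse, let $H\in\H$, and let $v_1,\ldots,v_n\in S\cap\partial H$ be $n$ affinely independent points. Since $P\subseteq H$, $\partial H$ is a supporting hyperplane of $P$, so $F:=P\cap\partial H$ is a face of $P$; it contains $v_1,\ldots,v_n$, forcing $\dim F\ge n-1$, while $F\subseteq\partial H$ forces $\dim F\le n-1$. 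Hence $F$ is a facet and $\partial H=\mathop{\rm aff}(F)$, so $H=H_F$.

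Given this bijection between $\H$ and the facets of $P$, the equality $\bigcap_{H\in\H}H=P$ is the classical facet description of a full-dimensional convex polytope, which follows, e.g., by separating any point outside $P$ from $P$ by a hyperplane and then translating that hyperplane inward until it meets a face of dimension $n-1$. For minimality, fix $H_F\in\H$ and choose $x_0$ in the relative interior of $F$. Then $x_0$ is not contained in any other facet $F'$, so $x_0\notin\partial H_{F'}$; since $x_0\in P\subseteq H_{F'}$, it lies in the topological interior of $H_{F'}$. Perturbing $x_0$ a small distance along the outward normal to $H_F$ therefore produces a point lying in $\bigl(\bigcap_{H'\in\H,\,H'\ne H_F}H'\bigr)\setminus H_F$, so $H_F$ cannot be omitted.

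I expect the main technical point to be the careful verification that the abstract description of $\H$ (the existence of $n$ affinely independent points of $S$ on $\partial H$) matches the geometric notion of a facet-defining half-space of $P$; in particular, the converse direction uses both that $P\subseteq H$ makes $\partial H\cap P$ a face, and that this face has exactly the maximal possible dimension $n-1$. Once this identification is made, both the equality $\bigcap_{H\in\H}H=P$ and its minimality reduce to standard polytope theory.
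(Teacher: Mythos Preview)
Your proof is correct and follows essentially the same route the paper invokes: the paper's proof consists solely of the sentence ``This is a consequence of Theorem~3.1.1 of \cite{G},'' i.e., the facet description of a full-dimensional polytope from Gr\"unbaum's \emph{Convex Polytopes}, which is precisely what you have unpacked. Your explicit identification of $\H$ with the set of facet-defining half-spaces, together with the standard irredundancy argument for facets, is exactly the content of that cited theorem, so there is no substantive difference in approach---only in level of detail.
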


\begin{proof} This is a consequence of Theorem 3.1.1 of \cite{G}.
\end{proof}

Now let $\P_n$ denote the set of monic polynomials of degree $n$ in a
single variable which have $n$ distinct roots in and are nonnegative on
$\bbn_0$, $\P_{n,N}$ denote the set of monic polynomials of degree $n$
which have $n$ distinct roots in and are nonnegative on $\bbn_N$, and
$\Q_{n,N}$ denote the set of polynomials of degree $n$, with leading term
$-x^n$, which have $n$ distinct roots in and are nonnegative on $\bbn_N$.
To describe these sets of polynomials more precisely we let $\A_n$ denote
the set of $n$-tuples $\alpha=(\a_1,\ldots,\a_n)$ of nonnegative integers
for which $\a_1<\a_2\cdots<\a_n$ and in addition:

\begin{itemize} 

\item If $n$ is even then $\a_{2k}=\a_{2k-1}+1$ for $k=1,\ldots,n/2$;

\item If $n$ is odd then $\a_1=0$ and $\a_{2k+1}=\a_{2k}+1$ for
$k=1,\ldots,(n-1)/2$;

\end{itemize}
 For  $\a\in\A_n$ we  define 
 \be
P_\a(x)=(x-\a_1)(x-\a_2)\cdots(x-\a_n).
 \ee
 Finally, let $R_N(x)=N-x$.  It follows immediately from these definitions
that the set $\P_n$ consists of all polynomials $P_\a$ with $\a\in\A_n$, that
(using $N\ge n$) $\P_{n,N}$ consists of all polynomials $P_\a$ with
$\a\in\A_n$ and $\a_n\le N$, and that $\Q_{n,N}$ consists of all
polynomials $R_NP_\a$ with $\a\in\A_{n-1}$ and $\a_{n-1}\le N-1$.

Now to any polynomial $P(x)=\sum_{k=0}^np_kx^k$ of degree at most $n$ we
associate the affine function $L_P$ on $\bbr^n$ defined by
 \be\label{Ldef}
 L_P(m)=\sum_{k=0}^n p_km_k
 \ee
 ($L_P$ is an affine rather than linear because $m_0$ takes the
fixed value $1$).  Clearly \eqref{Ldef} sets up a bijective correspondence
between polynomials of degree $n$ and affine functions on $\bbr^n$.

\begin{theorem} Suppose that $N\ge n$.  Then the moment vector $m=m^{(n)}$
is realizable on $\bbn_N$ if and only if
 \be\label{cond}
 L_P(m)\ge0\quad\hbox{and}\quad L_Q(m)\ge0
   \quad\hbox{for every $P\in\P_{n,N}$ and $Q\in\Q_{n,N}$.}
 \ee
 Moreover, none of the conditions in \eqref{cond} may be omitted. 
\end{theorem}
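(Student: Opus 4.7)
The plan is to identify the set of moment vectors $m^{(n)}$ realizable on $\bbn_N$ with the convex hull in $\bbr^n$ of the moment-curve points $v_k := (k, k^2, \ldots, k^n)$, $k = 0, 1, \ldots, N$, and then apply \clem{planes} together with the bijection $P \leftrightarrow L_P$ of \eqref{Ldef} to translate the half-space description of this convex hull into a polynomial description. Necessity of \eqref{cond} is immediate, because for any $\mu$ realizing $m$ on $\bbn_N$ and any $P \in \P_{n,N} \cup \Q_{n,N}$ one has $L_P(m) = E_\mu[P(X)] \ge 0$ by nonnegativity of $P$ on $\bbn_N$; the substance is in sufficiency and in the minimality claim.

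First I would note that a probability measure $\mu$ on $\bbn_N$ produces the moment vector $m = \sum_{k=0}^N \mu(k)\,v_k$, and every convex combination of the $v_k$ arises this way, so realizability is exactly membership in $\mathrm{conv}\{v_0,\ldots,v_N\}$. Since $N \ge n$, a Vandermonde computation shows that any $n$ of the $v_k$ with distinct indices are affinely independent, so the set $S = \{v_0, \ldots, v_N\}$ lies in no $(n-1)$-dimensional hyperplane and \clem{planes} expresses $\mathrm{conv}(S)$ minimally as $\bigcap_{H \in \H} H$. Any $H \in \H$ corresponds to an affine function $L_P \ge 0$, hence to a polynomial $P$ of degree at most $n$ with $P(k) \ge 0$ on $\bbn_N$; the requirement that $\partial H$ contain $n$ affinely independent points of $S$ translates to $P$ having $n$ distinct roots in $\bbn_N$, which in turn forces $\deg P = n$ (a polynomial of degree less than $n$ cannot vanish at $n$ distinct points). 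Conversely, any such $P$ produces a valid $H \in \H$, since the $n$ root-points are automatically affinely independent by the same Vandermonde argument, and $\partial H$ can contain no further point of $S$ because $\deg P = n$.

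The main work is then a combinatorial sign analysis that enumerates the monic (respectively leading $-x^n$) polynomials of degree $n$ with $n$ distinct roots $\a_1 < \cdots < \a_n$ in $\bbn_N$ that are nonnegative on $\bbn_N$. Such $P$ alternates sign between consecutive roots, and to be nonnegative on every integer of $\bbn_N$ each ``negative interval'' of $P$ must contain no element of $\bbn_N$. If the leading coefficient is positive, this forces precisely the pairing conditions in the definition of $\A_n$ together with $\a_n \le N$, and after normalization gives an element of $\P_{n,N}$. If the leading coefficient is negative, the negative intervals now include a tail to the right of $\a_n$ and (when $n$ is even) one to the left of $\a_1$, which force $\a_n = N$ and, when $n$ is even, $\a_1 = 0$; pulling out the factor $R_N = N-x$ exhibits $P$ as an element of $\Q_{n,N}$. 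This establishes the correspondence $\H \leftrightarrow \P_{n,N} \cup \Q_{n,N}$, after which \clem{planes} directly gives \eqref{cond} and the impossibility of omitting any condition. The sign/parity enumeration is the main obstacle; everything else is formal once this bijection is in place.
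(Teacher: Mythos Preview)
Your proposal is correct and follows essentially the same route as the paper: identify realizable moment vectors with the convex hull of the moment-curve points, invoke the Vandermonde argument for (affine) independence, apply \clem{planes}, and translate facets into degree-$n$ polynomials nonnegative on $\bbn_N$ with $n$ distinct roots there. The only difference is that you spell out the sign/parity enumeration identifying these polynomials with $\P_{n,N}\cup\Q_{n,N}$, whereas the paper records this identification just before the theorem as an immediate consequence of the definition of $\A_n$.
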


\begin{proof} Since each $P\in\P_{n,N}$ and $Q\in\Q_{n,N}$ is nonnegative
on $\bbn_N$, \eqref{cond} is certainly necessary for realizability.
Conversely, the set of probability measures on $\bbn_N$ is the set of all
convex combinations of the point masses $\delta_k$, $k=0,1,\ldots,N$, so
the set $S_N$ of all moment vectors realizable on $\bbn_N$ is the convex
hull of the set $S$ of all the corresponding vectors $v^{(k)}$, where
$v^{(k)}_j=k^j$ for $j=1,\ldots,n$ and $k=0,1,\ldots,N$.  Note that for any
$n$ distinct positive indices $k_1,\ldots,k_n$ the set of moment vectors
$v^{(k_i)}$ is linearly independent, since the matrix
$\bigl(v^{(k_i)}_j\bigr)_{i,j=1,\ldots,n}$ is, up to a (nonzero) factor
$k_i$ in row $i$, a Vandermonde matrix with nonzero determinant.  In
particular, since $N\ge n$, $S$ is not contained in any hyperplane of
dimension $n-1$ and $S_N$ may thus be characterized by \clem{planes}.  With
the correspondence noted above between affine functions and polynomials,
this characterization becomes
 \be
 S_N=\bigcap_{H\in\H}H=\bigcap_{P\in\R}\{m\mid L_P(m)\ge0\},
 \ee
 with $\R$ the set of 
polynomials of degree $n$, normalized to have leading coefficient
$\pm1$, which are nonnegative on $\bbn_N$ and have $n$ distinct zeros
$k_1,\ldots,k_n$ in $\bbn_N$ 
(that the corresponding points
$v^{(k_i)}$ do not belong to any $n-2$ dimensional affine subset follows
from the linear independence pointed out  above).  This yields \eqref{cond}.
\end{proof}

\section{Realizability on $\bbn_0$\label{OnN0}}

We now turn to necessary and sufficient conditions for realizability of
$m=m^{(n)}$ on $\bbn_0$.  Since any $P\in\P_n$ is nonnegative on $\bbn_0$
the condition
 \be\label{cond2}
 L_P(m)\ge0\quad\hbox{for every $P\in\P_n$}
 \ee
 is certainly necessary.  By the results of \cite{BT}, a moment vector $m$
is realizable on $\bbn_0$ if and only if it is realizable on $\bbn_N$ for
some $N$, so that $m$ will be realizable if and only if \eqref{cond2} holds
and in addition there exists an $N$ such that $L_Q(m)\ge0$ for every
$Q\in\Q_{n,N}$.  We want to replace the latter condition by one which does
not refer explicitly to $N$.

Consider then a polynomial $P=P_\alpha\in\P_{n-1}$ and an integer $N$ with
$N>\alpha_{n-1}$, and let $\Ph$ denote the polynomial $\Ph(x)=xP(x)$.
 Both $P$ and $\Ph$ are nonnegative on $\bbn_0$ and thus
realizability on $\bbn_0$ requires that
 \be\label{cond4}
 L_{\Ph}(m)\ge0\quad\hbox{and}\quad
  L_P(m)\ge0\quad \hbox{for every $P\in\P_{n-1}$}.
 \ee
 (Note that the first condition here does not follow from \eqref{cond2},
since $\Ph=\Ph_\alpha$ belongs to $\P_n$ if and only if $\alpha_1>0$,
which is possible only if $n$ is odd.)  Let $Q_N=R_NP\in\Q_{n,N}$; since
realizability on $\bbn_N$ for some $N$ implies such realizability for all
sufficiently large $N$, a necessary condition for realizability on some
$\bbn_N$ is that for all $P\in\P_{n-1}$ and all sufficiently large $N$,
$L_{Q_N}(m)=L_{R_NP}(m)\ge0$, i.e.,
 \be\label{cond3}
 NL_P(m)\ge L_{\Ph}(m).
 \ee
 But \eqref{cond4}, with \eqref{cond3}, requires in turn that
 \be\label{cond5}
 L_{P}(m)\ge0 \quad\hbox{and if} \quad 
  L_{P}(m)=0\quad \hbox{then}\quad L_{\Ph}(m)=0,
 \quad P\in\P_{n-1}.
 \ee
 We can now state the main result of this section.

 \begin{theorem}\label{necsuff} The conditions \eqref{cond2} and
\eqref{cond5} are necessary and (collectively) sufficient for
realizability of $m$ on $\bbn_0$.  \end{theorem}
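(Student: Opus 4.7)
The necessity of \eqref{cond2} and \eqref{cond5} has already been established in the discussion preceding the theorem, so the plan is to focus on sufficiency. Assume both conditions hold. Invoking the result of \cite{BT} cited just before \eqref{cond2}, we may reduce the goal of realizability on $\bbn_0$ to that of realizability on some $\bbn_N$ with $N\ge n$. Once such an $N$ is identified, realizability on $\bbn_N$ will be verified by applying the criterion of the theorem in \csect{OnN}.

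That criterion requires $L_P(m)\ge 0$ for every $P\in\P_{n,N}\cup\Q_{n,N}$. Since $\P_{n,N}\subseteq\P_n$, the first family of inequalities is immediate from \eqref{cond2}. Writing $Q=R_NP_\alpha$ with $P_\alpha\in\P_{n-1}$ and $\alpha_{n-1}\le N-1$, the second family unwinds to $N\,L_{P_\alpha}(m)\ge L_{\Ph_\alpha}(m)$. When $L_{P_\alpha}(m)=0$, condition \eqref{cond5} gives $L_{\Ph_\alpha}(m)=0$ and the inequality holds for every $N$; when $L_{P_\alpha}(m)>0$, it reduces to the quantitative condition $N\ge\rho(\alpha):=L_{\Ph_\alpha}(m)/L_{P_\alpha}(m)$.

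The remaining step is to show that $\Sigma:=\sup\{\rho(\alpha)\mid\alpha\in\A_{n-1},\ L_{P_\alpha}(m)>0\}$ is finite; any integer $N\ge\max(n,\Sigma)$ will then complete the proof. Both $L_{P_\alpha}(m)$ and $L_{\Ph_\alpha}(m)$ are explicit polynomial expressions in the entries of $\alpha$, and the pairing rule defining $\A_{n-1}$ forces $\alpha_{n-1}=\alpha_{n-2}+1$. Along any sequence in $\A_{n-1}$ with $\alpha_{n-1}\to\infty$, asymptotic analysis shows that the top-order monomials in $\alpha_{n-1}$ of numerator and denominator match in degree, and the ratio of their coefficients is a finite ratio of entries of $m$ (one computes the limit $m_1$ for $n=3$ and $m_2/m_1$ for $n=4$). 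Together with finiteness of the set of $\alpha\in\A_{n-1}$ below any fixed threshold, this yields the desired uniform bound.

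The main obstacle is precisely this uniform finiteness of $\Sigma$. For $n\ge5$ the index $\alpha$ carries several free parameters that can diverge at different rates along different combinatorial strata of $\A_{n-1}$, so the plan is to stratify by which $\alpha_i$ remain bounded and which diverge, compute the leading behavior of $L_{P_\alpha}(m)$ in each stratum, and use \eqref{cond2} (applied to suitably chosen $P_\beta\in\P_n$ extending $P_\alpha$) to rule out cancellations that would drive the denominator's leading coefficient to zero faster than the numerator's. Carrying out this stratified asymptotic analysis is the technical heart of the proof.
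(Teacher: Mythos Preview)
Your approach is essentially the paper's, just organized differently. The paper isolates the ``technical heart'' you identify into a separate result, \cthm{biglem}(b): under the hypothesis $L_P(m)\ge0$ for all $P\in\P_{n-1}$ (which is part of \eqref{cond5}), there is a single polynomial $\Pt_{n-1}^{(m)}\in\P_{n-1}$ that minimizes $L_{R_NP}(m)$ over $P\in\P_{n-1}$, uniformly in $N$. Given this, the proof of \cthm{necsuff} is two lines: apply \eqref{cond5} to the one polynomial $\Pt_{n-1}^{(m)}$ to obtain an $N^{(m)}$ with $L_{R_{N^{(m)}}\Pt_{n-1}^{(m)}}(m)\ge0$, and then the minimizing property handles every $Q\in\Q_{n,N^{(m)}}$ at once. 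Your stratification by which $\alpha_i$ stay bounded versus diverge, and your use of nonnegativity on $\P_n$ to control cancellations in the leading terms, is exactly what the paper carries out in Lemmas~\ref{induce} and \ref{lemLGsign} to prove \cthm{biglem}(b).

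One organizational point worth noting: the paper works directly with $L_{R_NP_\alpha}(m)=NL_{P_\alpha}(m)-L_{\Ph_\alpha}(m)$ rather than with the ratio $\rho(\alpha)$. This sidesteps the issue of sequences along which the denominator $L_{P_\alpha}(m)$ is positive but tends to zero; in the ratio formulation you would need to argue separately that such near-degeneracies do not blow up $\rho$, whereas the minimizer formulation absorbs this automatically. Your plan would go through, but the minimizer packaging is cleaner.
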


The result will follow easily from part (b) of \cthm{biglem}, given
immediately below.  Part (a) of that theorem will be used in
\csect{finite}.  The proof of  \cthm{biglem} is rather lengthy and we
defer it to \csect{biglemproof}.

\begin{theorem}\label{biglem} (a) If $L_P(m) >0 $ for all $P \in \P_k$,
$1 \leq k \leq n-2$, then there exists a polynomial $P_n^{(m)} \in \P_n$
such that for all $P \in \P_n$,
\[
L_{P_n^{(m)}}(m) \leq L_P(m).
\]

 \smallskip\noindent
 (b) If $L_{P}(m)\ge0$ for all $P\in\P_n$ then there exists a polynomial
$\Pt_n^{(m)} \in \P_n$ such that for all $P=P_\alpha\in \P_n$ and all
$N>\alpha_n$, 
\[
L_{R_N\Pt_n^{(m)}}(m) \leq L_{R_NP}(m).
\]
 \end{theorem}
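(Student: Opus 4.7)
The plan is to prove both parts by induction on $n$, using a common decomposition: splitting off the top pair. Since $\A_n$ forces the two largest entries of any $\alpha$ to be consecutive integers $(\beta,\beta+1)$, each $\alpha\in\A_n$ can be written uniquely as $\alpha=(\alpha',\beta,\beta+1)$ with $\alpha'\in\A_{n-2}$ and $\beta\geq\alpha'_{n-2}+1$, and then $P_\alpha(x)=P_{\alpha'}(x)(x-\beta)(x-\beta-1)$. Writing $a(\alpha')=L_{P_{\alpha'}}(m)$, $b(\alpha')=L_{xP_{\alpha'}}(m)$, $c(\alpha')=L_{x^2P_{\alpha'}}(m)$, one obtains
\[
L_{P_\alpha}(m) = a(\alpha')\beta^2 + (a(\alpha')-2b(\alpha'))\beta + (c(\alpha')-b(\alpha')),
\]
a quadratic in $\beta$ with leading coefficient $a(\alpha')$.

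For part (a) I would induct on $n$, with trivial bases $n\leq 2$ ($\P_1$ is a singleton, and on $\P_2$ the displayed quadratic has leading coefficient $1$). At the inductive step, the hypothesis $L_P(m)>0$ on $\P_k$ for $k\leq n-2$ entails the corresponding hypothesis at $n-2$, so by the inductive hypothesis a minimizer $P_{n-2}^{(m)}$ of $L_Q(m)$ on $\P_{n-2}$ exists with value $\epsilon>0$; hence $a(\alpha')\geq\epsilon$ uniformly in $\alpha'\in\A_{n-2}$. Existence of $P_n^{(m)}$ then follows once every sublevel set $\{\alpha\in\A_n:L_{P_\alpha}(m)\leq C\}$ is shown to be finite. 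There are two subcases: if $\alpha'$ ranges over a finite subset of $\A_{n-2}$, the coefficients $b(\alpha'),c(\alpha')$ are bounded and the positive-leading quadratic restricts $\beta$ to a bounded range; if $\alpha'$ leaves every finite subset, the inductive coercivity gives $a(\alpha')\to\infty$ while $\beta\geq\alpha'_{n-2}+1\to\infty$, and a direct comparison of the leading asymptotics of $a,b,c$ in $\alpha'$ (each of order $\prod_i\alpha'_i$, or $\prod_{i\geq 2}\alpha'_i\cdot m_1$ in the odd case where $\alpha'_1=0$, using the $n=1$ hypothesis $m_1>0$ to keep the lead positive) shows $L_{P_\alpha}(m)\to\infty$.

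Part (b) reduces to a variant of (a). Writing $L_{R_NP_\alpha}(m)=NL_{P_\alpha}(m)-L_{xP_\alpha}(m)$, the requested inequality $L_{R_N\Pt_n^{(m)}}(m)\leq L_{R_NP}(m)$ for all $P$ and all $N>\alpha_n$, in particular for all large $N$, forces $\Pt_n^{(m)}$ to minimize $L_P(m)$ on $\P_n$ and, among all such minimizers, to maximize $L_{xP}(m)$. Under the hypothesis $L_P(m)\geq 0$ on $\P_n$ one first observes (by the same top-pair decomposition, now letting $\beta\to\infty$ and comparing leading coefficients) that $L_Q(m)\geq 0$ on $\P_{n-2}$, which is just enough to rerun the coercivity argument of (a) and produce a minimizer of $L_P(m)$ with a finite minimizing set; maximizing $L_{xP}(m)$ over this finite set yields $\Pt_n^{(m)}$. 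A final linear-in-$N$ check, anchored by the nonnegativity hypothesis, confirms the inequality actually holds down to $N=\alpha_n+1$ and not merely for $N$ asymptotically large.

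The main obstacle, as signalled by the authors' decision to defer the proof to a separate section, is the coercivity estimate at the heart of the inductive step of (a) (and its reprise in (b)). The uniform bound $a(\alpha')\geq\epsilon$ alone is too crude, since the cross-term $-2b(\alpha')\beta$ could in principle cancel $a(\alpha')\beta^2$ along carefully balanced divergent sequences. Ruling this out requires exploiting the ordering constraint $\beta\geq\alpha'_{n-2}+1$ together with quantitative control of the leading asymptotics of $a,b,c$ as polynomials in $\alpha'$, and handling the various parities and edge cases — in particular the one where $\alpha'_1=0$ makes the naive leading term of $a$ vanish, so a subleading term involving $m_1$ takes over. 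This accumulation of technicalities is presumably what makes the proof both lengthy and the natural candidate for a separate section.
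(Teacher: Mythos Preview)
Your approach is genuinely different from the paper's. The paper does not induct on $n$ by peeling off the top pair; instead it constructs, for a suitable increasing tuple $\J=(J_1,\ldots,J_q)$, a finite partition of $\P_n$ into scale classes $\P_{n,l,\J}$ and shows, by a ``decrease a large pair by one'' monotonicity computation (the identity $(x-\alpha)(x-\alpha-1)-(x-\alpha+1)(x-\alpha)=-2(x-\alpha)$), that within each class the value of $L_P(m)$ or of $L_{R_NP}(m)$ is dominated by its value on an explicit finite subset. No induction on $n$ is used, and no asymptotic comparison of $a,b,c$ as polynomials in $\alpha'$ is needed; the paper's mechanism is a pointwise monotonicity once the large roots exceed a scale threshold.

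For part~(a) your induction-by-two outline is plausible, and the difficulty you isolate --- coercivity along diverging $\alpha'$, where the cross term $-2b(\alpha')\beta$ competes with $a(\alpha')\beta^2$ under the constraint $\beta\ge\alpha'_{n-2}+1$ --- is exactly the one that makes the proof lengthy. Your proposed fix via ``leading asymptotics of $a,b,c$ as $\prod_i\alpha'_i$'' is not yet an argument: these quantities are linear combinations of the moments with coefficients given by elementary symmetric functions of $\alpha'$, and the coefficient attached to $m_0$, $m_1$, $m_2$ respectively in $a,b,c$ are all of the same leading order, so no single term obviously dominates without further work. The paper sidesteps this by the scale-by-scale monotonicity above, which is a cleaner substitute for the coercivity estimate you are reaching for.

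Part~(b) has a genuine gap. Your plan is to take $\Pt_n^{(m)}$ as a minimizer of $L_P(m)$ over $\P_n$, breaking ties by maximizing $L_{xP}(m)$, and then to check the inequality $L_{R_N\Pt}\le L_{R_NP}$ linearly in $N$. Two problems: first, under the hypothesis of (b) you only get $L_Q(m)\ge0$ on $\P_{n-2}$, not $>0$; the uniform lower bound $a(\alpha')\ge\epsilon>0$ disappears, so you cannot simply ``rerun the coercivity argument of (a)'' --- for those $\alpha'$ with $a(\alpha')=0$ the quadratic in $\beta$ degenerates, and the set of minimizers can be infinite. Second, and more seriously, the ``final linear-in-$N$ check, anchored by the nonnegativity hypothesis'' is not justified and does not follow from $L_P(m)\ge0$ alone: the inequality $N\bigl(L_P(m)-L_{\Pt}(m)\bigr)\ge L_{xP}(m)-L_{x\Pt}(m)$ for all $N>\alpha_n$ would require a quantitative bound relating the gap $L_P-L_{\Pt}$ to the difference $L_{xP}-L_{x\Pt}$ and to $\alpha_n$, and you have not supplied one. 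The paper avoids this entirely: it works with $L_{R_NP}(m)$ directly and proves the pair-lowering monotonicity for it, using a separate sign-pattern lemma (if $L_{x^pQ}(m)=0$ for $p<p_0$ then $(-1)^{p_0}L_{x^{p_0}Q}(m)\ge0$, deduced from $L_P(m)\ge0$ on $\P_n$) to control the dominant term when the leading coefficients vanish. That lemma is the missing ingredient in your scheme for (b).
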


\begin{proofof}{\cthm{necsuff}} Necessity of \eqref{cond2} and
\eqref{cond5} is established above, so we must prove that these conditions
imply the existence of some $N^{(m)}$ such that $L_Q(m)\ge0$ for every
$Q\in\Q_{n,N^{(m)}}$.  Now by \eqref{cond5} and \cthm{biglem}(b),
$\Pt_{n-1}^{(m)}$ is defined, and then \eqref{cond5} further implies
that there exists an integer $N^{(m)}$ such that
$L_{R_{N}\Pt_{n-1}^{(m)}}(m)\ge0$ for $N\ge N^{(m)}$.  But this
suffices, since if $Q\in\Q_{n,N^{(m)}}$ then $Q=R_{N^{(m)}}P$ for some
$P=P_\alpha\in\P_{n-1}$ with $\alpha_n<N^{(m)}$, and then
$L_Q(m)\ge L_{R_{N^{(m)}}\Pt_{n-1}^{(m)}}(m)\ge0$ by \cthm{biglem}(b).
\end{proofof}

We finally show  that none the conditions \eqref{cond2} and
\eqref{cond5} can be omitted.

\begin{lemma}\label{all} Fix $n\ge2$.  Then:

 \smallskip\noindent
 (a) For any $P_\alpha\in\P_n$ there exists a moment vector $m^{(n)}$
which is not realizable but which satisfies all conditions \eqref{cond2}
and \eqref{cond5}, except that $L_{P_\alpha}(m)<0$;

 \smallskip\noindent
 (b) For any $P_\alpha\in\P_{n-1}$ there exists a moment vector $m^{(n)}$
which is not realizable but which satisfies all conditions \eqref{cond2}
and \eqref{cond5}, except that $L_{P_\alpha}(m)<0$;

 \smallskip\noindent
 (c) For any $P_\alpha\in\P_{n-1}$ there exists a moment vector $m^{(n)}$
which is not realizable but which satisfies all conditions \eqref{cond2}
and \eqref{cond5}, except that $L_{P_\alpha}(m)=0$ and
$L_{\Ph_\alpha}(m)>0$.  \end{lemma}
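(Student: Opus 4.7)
The plan is to construct, in each case, an explicit non-realizable counterexample as a small perturbation of the moment vector of a carefully chosen finitely supported probability measure. The governing observation will be an integer lower bound: if $\mu_0=\sum_i c_i\delta_{\beta_i}$ is a probability measure on $\{\beta_1,\ldots,\beta_k\}\subseteq\bbn_0$ with positive weights and $c_{\min}:=\min_i c_i$, and if $P$ is a monic polynomial nonnegative on $\bbn_0$ whose root set does not contain all of the $\beta_i$, then writing $m^*$ for the moment vector of $\mu_0$,
\[
L_P(m^*)=\sum_i c_i\,P(\beta_i)\ge c_{\min},
\]
because each $P(\beta_i)$ is a nonnegative integer and at least one of them is $\ge 1$. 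This uniform lower bound lets me use a single $\epsilon>0$ to control the infinitely many conditions at once.

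For part (a), I would take $\mu_0$ supported on $\{\alpha_1,\ldots,\alpha_n\}$ with positive weights and set $m=m^*-\epsilon\,e_n$ for $0<\epsilon<c_{\min}$, where $e_n$ is the unit vector in the $n$th coordinate. Then $L_{P_\alpha}(m)=-\epsilon<0$; for every other $P_{\alpha'}\in\P_n$ the above bound gives $L_{P_{\alpha'}}(m)\ge c_{\min}-\epsilon>0$; and for $P'\in\P_{n-1}$ the $n$th coordinate is invisible, so $L_{P'}(m)=L_{P'}(m^*)>0$ and the conditional in \eqref{cond5} is vacuous. For part (b), I would take $\mu_0$ on $\{\alpha_1,\ldots,\alpha_{n-1}\}$ and set $m=m^*-\epsilon\,e_{n-1}$ for $0<\epsilon<c_{\min}$. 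Then $L_{P_\alpha}(m)=-\epsilon<0$; for $P'\in\P_{n-1}\setminus\{P_\alpha\}$ the monic leading term yields $L_{P'}(m)\ge c_{\min}-\epsilon>0$; and for $P_{\alpha'}\in\P_n$ the coefficient of $x^{n-1}$ equals $-\sum_j\alpha'_j$ with $\sum_j\alpha'_j\ge 1$ (since $\alpha'_n\ge 1$ for every $\alpha'\in\A_n$ when $n\ge 2$), so $L_{P_{\alpha'}}(m)=L_{P_{\alpha'}}(m^*)+\epsilon\sum_j\alpha'_j\ge 0$. For part (c), I would use the same $\mu_0$ as in (b) but perturb upward, $m=m^*+\epsilon\,e_n$. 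Since $\deg P_\alpha=n-1$, $L_{P_\alpha}(m)=L_{P_\alpha}(m^*)=0$; since $\widehat{P}_\alpha$ is monic of degree $n$, $L_{\widehat{P}_\alpha}(m)=\epsilon>0$, which violates exactly the conditional in \eqref{cond5} for $P_\alpha$; the remaining $L_{P'}(m)$ are either unchanged and positive, or increased by $\epsilon$ and thus positive.

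Non-realizability will be immediate in each case: in (a) and (b) from $L_{P_\alpha}(m)<0$ combined with $P_\alpha\ge 0$ on $\bbn_0$, and in (c) because $L_{P_\alpha}(m)=0$ would force any realizing measure to be supported on the roots of $P_\alpha$, whence $L_{\widehat{P}_\alpha}(m)$ would also have to vanish. The main bookkeeping issue will be that $\P_n$ and $\P_{n-1}$ are infinite, so a single $\epsilon>0$ must work for all of them simultaneously; the integer-valued lower bound $c_{\min}$ together with the positivity of $\sum_j\alpha'_j$ for $\alpha'\in\A_n$ (used only in (b)) furnish the required uniform estimates.
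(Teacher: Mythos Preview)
Your proposal is correct and follows essentially the same approach as the paper: construct the counterexample by perturbing the moment vector of a probability measure supported on the roots of $P_\alpha$, and control the infinitely many remaining conditions via the uniform lower bound coming from integer values of the polynomials. The paper simply fixes uniform weights $c_i=1/n$ (resp.\ $1/(n-1)$) and the specific $\epsilon=1/(2n)$ (resp.\ $1/(2(n-1))$), but the argument is otherwise identical, including the use of $\sum_j\alpha'_j>0$ in part~(b).
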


\begin{proof}In the proof we will use the notation that if $P_\beta\in\P_k$
for some $k$ then $\mu_\beta$ is the probability measure
$\mu_\beta=k^{-1}\sum_{j=1}^k\delta_{\beta_j}$ and $v_\beta=v_\beta^{(n)}$
is the corresponding moment vector: $v_{\beta,i}=E_{\mu_\beta}[X^i]$,
$i=0,\ldots,n$.  Note that for any polynomial $P_\gamma\in\P_l$,
$E_{\mu_\beta}[P_\gamma(X)]=0$ if
$\{\gamma_1,\ldots,\gamma_l\}\supseteq\{\beta_1,\ldots,\beta_k\}$ and
otherwise $E_{\mu_\beta}[P_\gamma(X)]\ge1/k$, since $P_\gamma$ takes
nonnegative integer values on $\bbn_0$. Then:

 \smallskip\noindent
 (a) For $P_\alpha\in \P_n$ define $m^{(n)}$ by $m^{(n-1)}=v_\alpha^{(n-1)}$
and $m_n=v_{\alpha,n}-1/(2n)$.  Then for $P_\gamma\in\P_{n-1}$,
$L_{P_\gamma}(m)=E_{\mu_\alpha}[P_\gamma(X)]>0$, and for
$P_\gamma\in\P_n$ with $\gamma\ne\alpha$,
$L_{P_\gamma}(m)=E_{\mu_\alpha}[P_\gamma(X)]-1/(2n)\ge1/(2n)$. On the other
hand, $L_{P_\alpha}(m)=-1/(2n)$.  Thus $m^{(n)}$ satisfies condition (a).

 \smallskip\noindent
 (b) For $P_\alpha\in\P_{n-1}$ define $m^{(n)}$ by $m^{(n-2)}=v_\alpha^{(n-2)}$,
$m_{n-1}=v_{\alpha,n-1}-1/(2(n-1))$, and
$m_n=v_{\alpha,n}$.  Then for $P_\gamma\in\P_{n-1}$,
$L_{P_\gamma}(m)=E_{\mu_\alpha}[P_\gamma(X)]-1/(2(n-1))\ge1/(2(n-1))$ for
$\gamma\ne\alpha$ but $L_{P_\alpha}(m)=-1/(2(n-1))$.  On the other hand, for
$P_\gamma\in\P_n$,
 \be
L_{P_\gamma}(m)=E_{\mu_\alpha}[P_\gamma(X)]
    +\frac1{2(n-1)}\sum_{i=1}^n\gamma_i
  \ge E_{\mu_\alpha}[P_\gamma(X)]\ge0.
 \ee
Thus $m^{(n)}$ satisfies condition (b).

 \smallskip\noindent
 (c) Finally, for $P_\alpha\in\P_{n-1}$ define $m^{(n)}$ by
$m^{(n-1)}=v_\alpha^{(n-1)}$ and $m_n=v_{\alpha,n}+c$ for some $c>0$. Then
for $P_\gamma\in\P_{n-1}$,
$L_{P_\gamma}(m)=E_{\mu_\alpha}[P_\gamma(X)]\ge0$, and in particular
$L_{P_\alpha}(m)=0$, while for $P_\gamma\in\P_n$,
$L_{P_\gamma}(m)=E_{\mu_\alpha}[P_\gamma(X)]+c>0$.  Thus $m$ satisfies
condition (c).  But since $L_{P_\alpha}(m)=0$, if there is a measure $\nu$
realizing $m$ then it be supported on $\{\alpha_1,\ldots,\alpha_{n-1}\}$
and so, by the invertibility of the Vandermonde matrix, must in fact be
$\mu_\alpha$.  But then $E_\nu[X^n]=v_{\alpha,n}<m_n$, a contradiction.
\end{proof}

\subsection{Proof of \cthm{biglem}\label{biglemproof}}
 
 We begin by introducing some notation.  Fix $n$, let
$q=\lfloor n/2\rfloor$, and write $i_0=n-2q$; if $n$ is even then $i_0=0$
while if $n$ is odd then $i_0=1$, so that $\alpha_{i_0+1}$ is the smallest
$\alpha_i$ which is part of a pair $(\alpha_i,\alpha_{i+1})=(j,j+1)$.  If
$\J=(J_1,J_2,\ldots,J_q)$ is a strictly increasing $q$-tuple of positive
integers and $l$ is an integer satisfying $0\le l\le q$ then we write
$\P_{n,l,\J}$ for the set of polynomials $P_\alpha\in\P_n$ such that
 \be\label{order}
   \alpha_{i_0+2l}\le J_l,\quad\hbox{if $l>0$,}\quad\hbox{and}\quad 
  J_{l+1}<\alpha_{i_0+2l+2},\quad\hbox{if $l<q$}.
 \ee
We will speak of $\alpha_1,\ldots,\alpha_{i_0+2l}$ as the {\it small} roots
and $\alpha_{i_0+2l+1},\ldots,\alpha_n$ as the {\it large} roots at scale
$l$, where the scale will not be mentioned if it is clear from context.  If
$l=0$ then there are no small roots if $n$ is even and one small root
($\alpha_1=0$) if $n$ is odd; if $l=q$ there are no large roots.  Finally,
for $0\le l<q$ and $\gamma\in\A_{i_0+2l}$ with $\gamma_{i_0+2l}\le J_l$ we
define $\beta=\beta(\gamma,l,\J)\in\A_n$ by
 \be\label{beta}
\beta_i=\gamma_i, \quad i\le i_0+2l;\qquad 
\beta_i=J_{l+1}+i-(i_0+2l)-1,\quad i_0+2l<i\le n.
 \ee
  Note that $P_\beta\in\P_{n,l,\J}$ and that, given that $\beta_i=\gamma_i$
for $i=1,\ldots,i_0+2l$, the values of $\beta_i$ for $i_0+2l< i\le n$ are
the smallest possible values consistent with this fact.

\begin{proofof}{\cthm{biglem}} The theorem is an immediate consequence of
Lemmas~\ref{cover} and \ref{induce} below; we summarize the argument here.
The theorem asserts that, under certain conditions on the moment vector
$m$, the infima over $P\in\P_n$ of $L_P(m)$ and $L_{R_NP}(m)$ are in fact
realized.  We first show, in \clem{cover}, that the sets $\P_{n,l,\J}$ for
$0\le l\le q$, which are clearly disjoint by \eqref{order}, in fact
partition $\P_n$.  It then follows from Lemma~\ref{induce} that for an
appropriate choice of $\J$ these infima, taken over each $\P_{n,l,\J}$
separately, are achieved on the finite subset
 \be\label{Pstar}
\P^*_{n,l,\J}= \{P_{\beta(\gamma,l,\J)}\mid
    \gamma\in\A_{i_0+2l},\gamma_{i_0+2l}\le J_l\}\subset \P_{n,l,\J},
 \ee
 and the theorem follows at once.   We note during the proof that $\J$, and
 so the sets \eqref{Pstar}, can be chosen uniformly for $m$ in bounded
 subsets of $\bbr^n$. \end{proofof}

\begin{lemma}\label{cover} Every $P_\alpha\in\P_n$ belongs to
$\P_{n,l,\J}$ for some $l$, $0\le l\le q$.  \end{lemma}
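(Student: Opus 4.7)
The plan is to pick $l$ canonically from $\alpha$ and then verify the two (conditional) inequalities of \eqref{order} directly by inspection. The key observation is that the values $\alpha_{i_0+2l}$ for $l=1,\ldots,q$ form a strictly increasing sequence $\alpha_{i_0+2}<\alpha_{i_0+4}<\cdots<\alpha_n$, and the thresholds $J_1<J_2<\cdots<J_q$ are likewise strictly increasing with the same length $q$; one expects a ``crossing index'' at which $\alpha_{i_0+2l}$ first exceeds $J_l$, and that crossing should be the desired $l$.

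Concretely, I would define $l=l(\alpha)$ to be the largest integer in $\{0,1,\ldots,q\}$ for which $\alpha_{i_0+2l}\le J_l$, interpreting the inequality as vacuously true at $l=0$ so that the candidate set is nonempty and its maximum in the finite set $\{0,\ldots,q\}$ exists. There are then two cases. If $l=q$ then $\alpha_{i_0+2q}=\alpha_n\le J_q$ by the defining choice of $l$, while the second clause of \eqref{order} is vacuous because $l<q$ fails; hence $P_\alpha\in\P_{n,q,\J}$. If instead $l<q$ then by maximality of $l$ the inequality $\alpha_{i_0+2(l+1)}\le J_{l+1}$ must fail, giving $J_{l+1}<\alpha_{i_0+2l+2}$, which is exactly the second clause of \eqref{order}; the first clause $\alpha_{i_0+2l}\le J_l$ either holds by the choice of $l$ (when $l>0$) or is vacuous (when $l=0$). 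In either case $P_\alpha\in\P_{n,l,\J}$, as required.

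There is no substantive obstacle: this is essentially a monotone-sequence/pigeonhole argument. The only care needed is the careful handling of the boundary conventions at $l=0$, where the first clause of \eqref{order} is vacuous, and at $l=q$, where the second is vacuous; these conventions must be built into the very definition of $l$ to avoid spurious edge cases at the extremes.
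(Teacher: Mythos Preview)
Your proof is correct and is essentially the same argument as the paper's, just packaged constructively rather than contrapositively: the paper assumes $P_\alpha\notin\P_{n,l,\J}$ for all $l<q$ and shows inductively that every first-clause inequality $\alpha_{i_0+2l}\le J_l$ then holds (forcing $P_\alpha\in\P_{n,q,\J}$), whereas you directly pick the maximal $l$ satisfying that first-clause inequality and read off the second clause from maximality. Both rest on the same observation that the first clause at level $l$ together with failure of membership in $\P_{n,l,\J}$ forces the first clause at level $l+1$.
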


\begin{proof} Suppose that for some $P_\alpha$ we have
$P_\alpha\notin\P_{n,l,\J}$ for $l=0,\ldots,q-1$.  Then we claim that
$\alpha_{i_0+2l}\le J_l$ for $l=1,\ldots,q$, which implies that
$P_\alpha\in\P_{n,q,\J}$.  The claim is proved by induction on $l$.  For
from $P_\alpha\notin\P_{n,0,\J}$ it follows that $\alpha_{i_0+2}\le J_1$.
Similarly, from $\alpha_{i_0+2l}\le J_l$ and $P_\alpha\notin\P_{n,l+1,\J}$
it follows that $\alpha_{i_0+2(l+1)}\le J_{l+1}$.  \end{proof}

\begin{lemma} \label{induce} Let $B$ be a bounded subset of $\bbr^n$. Then
there exist $J_1,J_2,\ldots,J_q$ as above such that for $k=0,\ldots,q-1$:

\smallskip\noindent
 (a) If $m \in B$ and $L_{P}(m) > 0$ for all 
$P\in \bigcup_{j=0}^{n-2}\P_{j}$  then for  $P_\alpha\in\P_{n,k,\J}$,
 \be\label{concludea} L_{P_\alpha}(m)\ge L_{P_\beta}(m),
 \ee
 with $\beta=\beta(\alpha_1,\ldots,\alpha_{i_0+2k},k,\J)$ as given in
\eqref{beta}.

 \smallskip\noindent
 (b) If $L_{P}(m)\ge0$ for all $P_\alpha\in\P_{n}$ then for
$P_\alpha\in\P_{n,k,\J}$ and  $N>\alpha_n$,
 \be\label{conclude}
L_{R_NP_\alpha}(m)\ge L_{R_NP_\beta}(m),
 \ee
 with $\beta=\beta(\alpha_1,\ldots,\alpha_{i_0+2k},k,\J)$ given by
\eqref{beta}.
\end{lemma}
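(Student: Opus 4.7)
The plan is to treat (a) and (b) by a one-pair-at-a-time monotonicity argument. Fix $k$ with $0\le k<q$ and let $P_\alpha\in\P_{n,k,\J}$; index its large-root pairs by $(j_s,j_s+1)$ for $s=1,\ldots,q-k$. For each $s$, set
\[
Q_s(x):=P_\alpha(x)/[(x-j_s)(x-j_s-1)].
\]
The residual roots (the small roots of $\alpha$, the other $q-k-1$ large pairs, and the root $0$ when $n$ is odd) retain the correct pair structure, so $Q_s\in\P_{n-2}$; this is what lets us invoke the positivity hypothesis at degree $n-2$.

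The basic local move replaces $(j_s,j_s+1)$ by $(j_s-1,j_s)$, producing a new admissible $\alpha'\in\P_{n,k,\J}$ whenever $j_s\geq J_{k+1}+2(s-1)+1$. Direct expansion gives
\[
L_{P_\alpha}(m)-L_{P_{\alpha'}}(m) = -2L_{Q_s(x)(x-j_s)}(m) = 2\bigl(j_sL_{Q_s}(m)-L_{xQ_s}(m)\bigr).
\]
Under the hypothesis of~(a), $L$ is strictly positive on $\P_{n-2}$, so $L_{Q_s}(m)>0$. The right-hand side is therefore affine in $j_s$ with strictly positive slope, and is nonnegative precisely when $j_s\geq j_s^*(m):=L_{xQ_s}(m)/L_{Q_s}(m)$. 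Thus if we choose $\J$ large enough that $J_{k+1}+2(s-1)\geq j_s^*(m)$ for all $s$, uniformly in $m\in B$ and in the configuration, each downward shift leaves $L_{P_\alpha}(m)$ nonincreasing.

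Such a $\J$ exists: $L_{xQ_s}(m)$ and $L_{Q_s}(m)$ are polynomials in the (bounded) moments and in the positions of the roots of $Q_s$, and as the other large roots $j_{s'}$ grow, the ratio $L_{xQ_s}(m)/L_{Q_s}(m)$ stabilizes to $L_{xS}(m)/L_S(m)$ where $S$ is the small-root polynomial, with $L_S(m)>0$ by hypothesis. We choose $J_q,J_{q-1},\ldots,J_1$ successively (starting with the scale $k=q-1$, where there is only a single large pair) so that each $J_{k+1}$ dominates the supremum of $j_s^*$ over all admissible configurations. A telescoping sequence of shifts in the order $s=1,2,\ldots,q-k$ (which keeps the configuration admissible at every stage) then carries any $\alpha\in\P_{n,k,\J}$ to $\beta$, proving~\eqref{concludea}.

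Part (b) follows the same pattern, with
\[
L_{R_NP_\alpha}(m)-L_{R_NP_{\alpha'}}(m) = -2L_{(N-x)Q_s(x)(x-j_s)}(m),
\]
an affine function of $N$. For nonnegativity for all $N>\alpha_n$ we need (i) the coefficient of $N$, namely $-2L_{Q_s(x)(x-j_s)}(m)$, to be nonnegative, which is the same threshold $j_s\geq j_s^*(m)$ handled by the same choice of $\J$; and (ii) the value at $N=\alpha_n+1$ to be nonnegative, handled by a parallel threshold argument applied to the auxiliary polynomial $(\alpha_n+1-x)Q_s(x)$. The main obstacle throughout is the uniform threshold bound in paragraph three, in particular the uniform positive lower bound on $L_{Q_s}(m)$, which is what pins down the hypotheses on $B$ and on the positivity of $L$.
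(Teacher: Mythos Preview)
Your argument for (a) follows the same one-pair-at-a-time local-move strategy as the paper, but packages the key difference $L_{P_\alpha}(m)-L_{P_{\alpha'}}(m)$ as $2\bigl(j_sL_{Q_s}(m)-L_{xQ_s}(m)\bigr)$ rather than expanding it as an alternating sum $\sum_p(-1)^pB_pL_{x^pQ_\gamma}(m)$ against the \emph{small}-root polynomial $Q_\gamma$.  Two points need repair.  First, the construction order must be $J_1,J_2,\ldots,J_q$, not the reverse: at scale $k$ the small roots are constrained by $J_k$, so $J_{k+1}$ can only be chosen after $J_k$ is fixed (otherwise the set of admissible small-root configurations $\gamma$ is not yet finite).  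Second, your ``stabilization'' claim is only a limit statement; to get a \emph{uniform} bound on $j_s^*$ over all positions of the other large roots you need precisely the ratio estimate on elementary symmetric functions that the paper uses, so that the $p=0$ term dominates.  Once you do that, you have reproduced the paper's argument.

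The genuine gap is in (b).  There the hypothesis is only $L_P(m)\ge0$ for $P\in\P_n$, which gives $L_{Q_s}(m)\ge0$ but not strict positivity; when $L_{Q_s}(m)=0$ your threshold $j_s^*=L_{xQ_s}(m)/L_{Q_s}(m)$ is undefined and the affine-in-$j_s$ argument collapses.  The paper avoids this by keeping the expansion against the fixed polynomial $Q_\gamma$ and invoking \clem{lemLGsign}: if $L_{x^pQ_\gamma}(m)=0$ for all $p<p_0$ then $(-1)^{p_0}L_{x^{p_0}Q_\gamma}(m)\ge0$, so the first surviving term in the alternating sum still has the correct sign.  Your item~(ii) also does not work as written: the auxiliary polynomial $(\alpha_n+1-x)Q_s(x)$ has negative leading coefficient and is not nonnegative on all of $\bbn_0$, so no hypothesis controls its sign.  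The paper instead absorbs $R_N$ into the large-root factor, writes $R_N(T_\alpha-T_{\alpha'})=\sum_p(-1)^pC_px^p$ with $C_p$ the symmetric functions of the large roots with $\alpha_{i_0+2j}$ replaced by $N$, and applies the same dominant-term argument together with \clem{lemLGsign}.
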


 \medskip
  \begin{proof} Note that $P_{n,k,\J}$ and the index $\beta$ of
\eqref{concludea}--\eqref{conclude} are well defined once
$J_1,\ldots,J_{k+1}$ have been specified. Thus we may proceed by induction,
assuming that $J_1,\ldots,J_l$ have been constructed so that (a) and (b)
are satisfied for $k<l $ and proving the existence of $J_{l+1}$ so that
they are satisfied for $k=l$.  The case $l=0$ is similar to other cases and
we treat all values of $l$ together. 

Suppose now that we fix $\gamma\in\A_{i_0+2l}$ with
$\gamma_{i_0+2l}\le J_l$.  (More precisely, this holds if $l\ge1$; if $l=0$
then $\gamma$ must be $(0)$ if $i_0=1$ and an empty 0-tuple of indices if
$i_0=0$). We will show that there then exists a number $J^{(\gamma)}>J_l$
such that if $J_{l+1}\ge J^{(\gamma)}$ then whenever
$P_\alpha\in\P_{n,l,\J}$ has small roots (on scale $l$) given by $\gamma$,
i.e., $\alpha_i=\gamma_i$ for $1\le i\le i_0+2l$, and large roots such that
for some $j$ with $l<j\le q$,
 \be\label{room}
  \alpha_{i_0+2j-1}>\max\{\alpha_{i_0+2j-2}+1,\beta_{i_0+2j-1}(\gamma,l,\J)\}, 
 \ee
 then  under the hypotheses of (a), 
 \be\label{conclude1a}
L_{P_\alpha}(m)> L_{P_{\alpha'}}(m), 
 \ee
and under the hypotheses of (b), 
 \be\label{conclude1}
L_{R_NP_\alpha}(m)> L_{R_NP_{\alpha'}}(m) \text{ if }N>\alpha_n.  
 \ee
 Here $P_{\alpha'}$ is obtained from $P_\alpha$ by decreasing by 1 the
values of a pair of zeros; specifically, $\alpha_i'=\alpha_i-1$ if
$i\in\{i_0+2j-1,i_0+2j\}$ and $\alpha_i'=\alpha_i$ otherwise, so that by
\eqref{room}, $\alpha'\in\A_n$ and $P_{\alpha'}\in\P_{n,l,\J}$.

 Once the existence of $J^{(\gamma)}$ is established, the induction step
follows easily.  For since there are only a finite number of
$\gamma\in\A_{i_0+2l}$ with $\gamma_{i_0+2l}\le J_l$, we may define
$J_{l+1}=\sup_{\gamma}J^{(\gamma)}$, so that \eqref{conclude1a} and
\eqref{conclude1} hold for all $\gamma$ and all $P_\alpha\in\P_{n,l,\J}$,
and from this obtain \eqref{concludea} and \eqref{conclude} by decreasing
the large values of $\alpha$, one pair at a time, generating a sequence
$\alpha\to\alpha'\to\alpha''\to\cdots\to\alpha^{(M)}$ with
$\alpha^{(M)}=\beta(\alpha_1,\ldots,\alpha_{i_0+2l},l,\J)$.

We will separately find tentative values of $J^{(\gamma)}$ which lead to
\eqref{conclude1a} under the hypotheses of (a) and to \eqref{conclude1}
under the hypotheses of (b); the larger of these two values is then the
actual $J^{(\gamma)}$.  In each case we write $P_\alpha\in\P_{n,l,\J}$ as
$P_\alpha(x)=T_\alpha(x)Q_\gamma(x)$, where $Q_\gamma$ contains the
factors $x-\gamma_i=x-\alpha_i$ for small $\alpha_i$ (if $l=i_0=0$ then
$Q_\alpha(x)=1$) and $T_\alpha$ the corresponding factors for large
$\alpha_i$, and use
\be\label{diff}
(x-\alpha)(x-\alpha-1)-(x-\alpha+1)(x-\alpha)
 =-2(x-\alpha),
 \ee
to simplify the difference of the two sides of \eqref{conclude1a} or
\eqref{conclude1}.

 In case (a), we have from \eqref{diff} that
 \be\label{ralphaa}
 T_\alpha(x)-T_{\alpha'(x)}
 =-2\prod_{\textstyle\newatop{i>i_0+2l}{i\ne i_0+2j}}(x-\alpha_i)
 =\sum_{p=0}^{2(q-l)-1}(-1)^{p}B_px^p,
 \ee
 where $B_p$ is twice the $(2(q-l)-p-1)^{\rm th}$ symmetric function
of the large roots, omitting $\alpha_{i_0+2j}$.
 An easy computation shows that when $0\le p\le2(q-l)-2$ and all large roots
satisfy $\alpha_i\ge J^{(\gamma)}$,
 \be\label{ratio}
  B_p\ge J^{(\gamma)}\frac{p+1}{2(q-l)-p-1}B_{p+1}
    \ge\frac{J^{(\gamma)}}{2(q-l)-1}B_{p+1}.
 \ee
  Thus by choosing $J^{(\gamma)}$ appropriately we may ensure that, for
$J_{l+1}\ge J^{(\gamma)}$ and $P\in\P_{n,l,\J}$, all the ratios
$B_p/B_{p+1}$ are as large as we wish.  As
$Q_\alpha \in \bigcup_{i=0}^{n-2}\P_i$ we have by the hypothesis of (a)
that $L_{Q_\alpha}(m) >0$ and so the sum
 \be\label{needposa}
 L_{(T_\alpha-T_{\alpha'})Q_\alpha}(m)
    =\sum_{p=0}^{2(q-l)-1}(-1)^pB_pL_{x^pQ_\alpha}(m)
 \ee
 will be dominated, for sufficiently large $J^{(\gamma)}$, by the summand
for $p=0$. Hence, for such $J^{(\gamma)}$, (\ref{needposa}) is positive,
that is, \eqref{conclude1a} holds.  How large one must choose
$J^{(\gamma)}$ depends only on the $L_{x^pQ_\alpha}(m)$ and hence
$J^{(\gamma)}$ can be chosen uniformly for $m \in B$.
  
 To construct $J^{(\gamma)}$ in case (b) we proceed similarly.  In parallel
to \eqref{ralphaa} we now have
 \be\label{ralpha}
 R_N(x)\bigl[T_\alpha(x)-T_{\alpha'(x)}\bigr]=\sum_{p=0}^{2(q-l)}(-1)^pC_px^p,
 \ee
 where, again using \eqref{diff}, we see that $C_p$ is twice the
$(2(q-l)-p)^{\rm th}$ symmetric function of the large roots, but with
$\alpha_{i_0+2j}$ replaced by $N$. Hence by choosing $J^{(\gamma)}$
appropriately and requiring that all large roots satisfy
$\alpha_i\ge J^{(\gamma)}$ and that $N\geq J^{(\gamma)}$ we may make all
the ratios $C_p/C_{p+1}$ arbitrarily large.  The analogue of
\eqref{needposa} is 
 \be\label{needpos}
 L_{R_N(T_\alpha-T_{\alpha'})Q_\alpha}(m)
    =\sum_{p=0}^{2(q-l)}(-1)^pC_pL_{x^pQ\alpha}(m).
 \ee
 In this case we do not know that $L_{Q_\alpha}(m) >0$, but certainly for
sufficiently large $J^{(\gamma)}$ this sum is dominated by the term
corresponding to $p=p_0$, where $p_0$ is the smallest index $p$ such that
$L_{x^pQ\alpha}(m) \neq 0$. But by
Lemma~\ref{lemLGsign} below,  $(-1)^{p_0} L_{x^{p_0}Q\alpha}(m) > 0 $, so that
(\ref{needpos}) is positive.  As in case (a) the choice of $J^{(\gamma)}$
is uniform for $m\in B$.  \end{proof}

 \begin{lemma}\label{lemLGsign} Suppose that $L_{P}(m)\geq 0$ for all
$P\in\P_n$, and that $l$ and $p_0$ satisfy $0\le l<q$ and
$0\leq p_0\leq 2(q-l)$.  If $Q \in \P_{i_0+2l}$ satisfies $L_{x^pQ}(m)=0$ for
$0 \leq p < p_0$ then $(-1)^{p_0}L_{x^{p_0}Q}(m) \geq 0$.  \end{lemma}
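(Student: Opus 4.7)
The plan is to construct, for large integer parameters $M_1<\cdots<M_{q-l}$, a polynomial $P\in\P_n$ of the form $P=QR$ with
\[
R(x)=\prod_{j=1}^{q-l}(x-M_j)(x-M_j-1),
\]
and then deduce $(-1)^{p_0}L_{x^{p_0}Q}(m)\ge 0$ by an asymptotic argument as the $M_j\to\infty$. The point is that $R$ is monic, nonnegative on $\bbn_0$, with $2(q-l)$ distinct paired roots, and multiplying by $Q$ brings the degree up to exactly $n=i_0+2q$ while preserving the pairing required by $\A_n$.

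First I would verify $P\in\P_n$. Write $Q=P_\gamma$ for some $\gamma\in\A_{i_0+2l}$ (the case $l=0$ with $i_0=0$ just amounts to $Q\equiv 1$). Taking $M_j=M+2j$ with an integer $M>\gamma_{i_0+2l}+1$ makes the full list of roots
\[
\gamma_1<\cdots<\gamma_{i_0+2l}<M_1<M_1+1<M_2<M_2+1<\cdots<M_{q-l}<M_{q-l}+1
\]
strictly increasing in $\bbn_0$ with the pairs-of-consecutive-integers structure that $\A_n$ demands (and with the forced root $\alpha_1=0$ already present in $\gamma$ when $i_0=1$). Hence $P\in\P_n$ and the standing hypothesis of the lemma gives $L_P(m)\ge0$.

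Next I expand $R(x)=\sum_{k=0}^{2(q-l)}r_kx^k$. Since all $2(q-l)$ roots of $R$ are positive, $r_k=(-1)^{2(q-l)-k}e_{2(q-l)-k}=(-1)^ke_{2(q-l)-k}$, where $e_j$ denotes the $j$-th elementary symmetric function of these roots. Linearity of $L$ and the vanishing hypothesis $L_{x^pQ}(m)=0$ for $p<p_0$ then reduce $L_P(m)\ge0$ to
\[
\sum_{k=p_0}^{2(q-l)}(-1)^ke_{2(q-l)-k}L_{x^kQ}(m)\ge0.
\]

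Finally, I divide by the positive quantity $e_{2(q-l)-p_0}$ to obtain
\[
(-1)^{p_0}L_{x^{p_0}Q}(m)+\sum_{k=p_0+1}^{2(q-l)}(-1)^k\frac{e_{2(q-l)-k}}{e_{2(q-l)-p_0}}L_{x^kQ}(m)\ge0,
\]
and pass to the limit $M\to\infty$. With $M_j=M+2j$ every root of $R$ is of order $M$, so $e_j=\Theta(M^j)$ and the ratio $e_{2(q-l)-k}/e_{2(q-l)-p_0}$ is of order $M^{p_0-k}$, which tends to $0$ for every $k>p_0$. This kills the correction sum and leaves $(-1)^{p_0}L_{x^{p_0}Q}(m)\ge0$, as claimed. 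The only subtle point is the bookkeeping that ensures $P\in\P_n$ (in particular the pairing constraint of $\A_n$ and, when $n$ is odd, the forced root at $0$); the asymptotic step is then routine because each $L_{x^kQ}(m)$ is a fixed real number independent of $M$.
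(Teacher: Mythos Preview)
Your proof is correct and follows essentially the same route as the paper: both multiply $Q$ by a polynomial in $\P_{2(q-l)}$ with large roots to obtain $P\in\P_n$, expand in elementary symmetric functions, use $L_P(m)\ge0$, and argue that the $p_0$-th term dominates as the roots tend to infinity. The only differences are notational (the paper calls your $R$ by $T$ and phrases the limiting step as ``the sum is dominated by the $p_0^{\rm th}$ summand'' rather than dividing and taking $M\to\infty$).
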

 
 \begin{proof} Let $k=2(q-l)$. Given $Q\in\P_{i_0+2l}$ we choose
$T\in\P_{k}$ so that its zeros are all greater than the largest zero of $Q$
and hence $P=QT\in\P_n$.  Then
 \be
 T(x)=\sum_{p=0}^k(-1)^pA_px^p,
 \ee
 with $A_p$ the $(n-p)^{\rm th}$ symmetric function of the roots of $T$, and
as in \eqref{ratio} we may make all the ratios $A_p/A_{p+1}$ arbitrarily
large by choosing the roots of $T$ large.  Now we are given that
 \be\label{sum}
 L_{TQ}(m)=\sum_{p=0}^{k}(-1)^pA_pL_{x^pQ}(m)\ge0;
 \ee
 if $L_{x^pQ}(m)=0$ for all $p<p_0$ then the $p_0^{\rm th}$ summand in
\eqref{sum} must be nonnegative, that is $(-1)^{p_0}L_{x^{p_0}Q}(m) \geq 0$.
\end{proof}

\section{A finite set of realizability conditions\label{finite}}

Theorem~\ref{necsuff} gives a characterization of realizability in terms
of the values of infinitely many affine forms $L_P(m)$.  The aim of this
section is to give a procedure for determining realizability which
involves evaluating only a small number of these forms.  We begin with a
definition which partitions the set of realizable moment vectors into two
disjoint subsets, termed I-realizable and B-realizable.  The terminology
reflects the fact that I-realizable moment vectors lie in the interior of
the set of realizable moment vectors and B-realizable ones on the
boundary of that set (see \crem{misc}(b)).

\begin{definition}\label{IB} A moment vector $m=m^{(n)}$ which is
realizable on $\bbn_0$ is {\it I-realizable} if strict positivity holds in
\eqref{cond2} and \eqref{cond5}, that is, if $L_P(m)>0$ for all
$P\in\P_n\cup\P_{n-1}$; otherwise, that is if $L_P(m)=0$ for some
$P\in\P_n\cup\P_{n-1}$, it is {\it B-realizable}.  \end{definition}

\begin{lemma}\label{BR} Suppose that $m=m^{(n)}$ is such that
$m^{(n-1)}$ is B-realizable, and let $P\in\P_{n-i}$, with $i=1$ or $i=2$,
be such that $L_P(m)=0$.  Then

 \smallskip\noindent
 (a)~$m^{(n-1)}$ is realized by a unique measure whose support is
contained in the zeros of $P$, and

 \smallskip\noindent
 (b)~$m^{(n)}$ is realizable if and only if $L_{x^iP}(m)=0$, and the
latter condition uniquely determines $m_n$.\end{lemma}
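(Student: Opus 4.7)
My plan hinges on two simple ingredients: the observation that $E_\nu[P(X)] = 0$ together with $P \ge 0$ on $\bbn_0$ forces $\supp \nu$ into the zero set of $P$, and the invertibility of a square Vandermonde matrix indexed by these zeros.

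For part (a), I would let $\nu$ be any probability measure on $\bbn_0$ realizing $m^{(n-1)}$ (one exists by hypothesis). Since $P=P_\alpha \in \P_{n-i}$ is nonnegative on $\bbn_0$ and $E_\nu[P(X)] = L_P(m) = 0$, the support of $\nu$ must lie in the zero set $\{\alpha_1, \ldots, \alpha_{n-i}\}$ of $P$. Writing $\nu = \sum_{j=1}^{n-i} w_j \delta_{\alpha_j}$, the moment equations $\sum_j w_j \alpha_j^k = m_k$ for $k = 0, 1, \ldots, n-i-1$ form a square linear system in the weights whose coefficient matrix is Vandermonde and hence nonsingular. This pins down the $w_j$, and therefore $\nu$, uniquely.

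For part (b), I would first establish necessity of $L_{x^i P}(m) = 0$: any $\nu$ realizing $m^{(n)}$ also realizes $m^{(n-1)}$, so by (a) $\supp\nu \subseteq \{\alpha_1,\ldots,\alpha_{n-i}\}$; then $X^i P(X) \equiv 0$ $\nu$-a.s., so $L_{x^i P}(m) = E_\nu[X^i P(X)] = 0$. For sufficiency, the unique $\nu$ from (a) is the only candidate for a realizing measure, so the question reduces to whether $E_\nu[X^n] = m_n$. Since $P$ is monic of degree $n-i$, the polynomial $x^i P(x)$ is monic of degree $n$; splitting off the $m_n$ summand from the affine form $L_{x^i P}(m)$ and using that $\nu$ matches $m_k$ for every $k \le n-1$ yields the bookkeeping identity $L_{x^i P}(m) = E_\nu[X^i P(X)] + m_n - E_\nu[X^n]$, whose first term vanishes because $\supp\nu$ lies in the zero set of $P$. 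Thus $L_{x^i P}(m) = m_n - E_\nu[X^n]$, which simultaneously yields sufficiency and shows that the condition $L_{x^i P}(m)=0$ is an affine equation in $m_n$ with leading coefficient $1$, uniquely determining $m_n$.

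No real obstacle is expected; the one point needing mild care is the identity $L_{x^i P}(m) = m_n - E_\nu[X^n]$, which is a direct consequence of $P$ being monic of degree exactly $n-i$ and of $\nu$ realizing the lower moments.
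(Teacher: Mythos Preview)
Your argument is correct and follows essentially the same line as the paper's proof: both use that $P\ge0$ on $\bbn_0$ with $E_\nu[P(X)]=0$ forces $\supp\nu$ into the zero set of $P$, then invoke Vandermonde invertibility to pin down the weights, and finally observe that $x^iP(x)$ is monic of degree $n$ to handle part~(b). Your bookkeeping identity $L_{x^iP}(m)=m_n-E_\nu[X^n]$ makes the sufficiency and uniqueness in (b) slightly more explicit than the paper's one-line version, but the substance is the same.
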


\begin{proof} Let $\mu$ be a measure on $\bbn_0$ realizing $m^{(n-1)}$, so
that $0=L_P(m)=E_\mu[P(X)]$, where $X:\bbn_0\to \bbn_0$ is the identity.
Since $P$ is nonnegative on $\bbn_0$, the support of $\mu$ must be a subset
of the $n-i$ distinct zeros $\alpha_1, \ldots ,\alpha_{n-i}$ of $P$, so
that $\mu=\sum_{j=1}^{n-i}c_j\delta_{\alpha_j}$ for some
$c_1,\ldots,c_{n-i}$, and $m_k=E_\mu[X^k]=\sum_jc_j\alpha_j^k$ for
$0\le k\le n-i$.  As the vectors
$(\alpha_j,\alpha_j^2,\ldots,\alpha_j^{n-i})$, $1\leq j \leq n-i$, are
linearly independent, $c_1,\ldots,c_{n-i}$ and hence $\mu$ are uniquely
determined by $m_1, \ldots , m_{n-i}$, which proves (a).  If $m^{(n)}$ is
realizable then the realizing measure must be $\mu$, so that
$L_{x^iP}(m) = E_\mu[X^iP(X)]=0$, and conversely if $L_{x^iP}(m)=0$ then
$\mu$ realizes $m^{(n)}$; this proves the first statement of (b).  The
second statement follows from the fact that
$x^iP(x)=x^n+\text{lower order terms}$.  \end{proof}

\begin{lemma}\label{nonzero} If $m^{(n-1)}$ is I-realizable then
$L_Q(m) >0$ for all $Q \in \bigcup_{k=1}^{n-1} \P_k$.  \end{lemma}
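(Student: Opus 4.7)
The plan is by contradiction, exploiting the fact that if $L_Q(m)=0$ for a nonnegative $Q$ then any realizing measure must be supported on the zeros of $Q$. The direct cases are immediate: by \cdef{IB}, $m^{(n-1)}$ being I-realizable means $L_P(m) > 0$ for all $P \in \P_{n-1} \cup \P_{n-2}$, which settles $Q \in \P_{n-1} \cup \P_{n-2}$. So suppose $Q = P_\gamma \in \P_k$ with $1 \leq k \leq n-3$, and assume for contradiction that $L_Q(m) = 0$ (we have $L_Q(m) \geq 0$ automatically from realizability of $m^{(n-1)}$ together with $Q \geq 0$ on $\bbn_0$).

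Fix a probability measure $\mu$ on $\bbn_0$ realizing $m^{(n-1)}$. Since $\deg Q = k \leq n-1$, one has $L_Q(m) = E_\mu[Q(X)]$, and the vanishing of this expectation combined with $Q \geq 0$ forces $\supp \mu \subseteq \{\gamma_1,\ldots,\gamma_k\}$.

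I now extend $\gamma$ to some $\beta \in \A_{n'}$ with $n' \in \{n-1, n-2\}$ such that $\{\gamma_1,\ldots,\gamma_k\} \subseteq \{\beta_1,\ldots,\beta_{n'}\}$. Given such a $\beta$, the polynomial $P_\beta$ vanishes on $\supp\mu$, so $L_{P_\beta}(m) = E_\mu[P_\beta(X)] = 0$; since $P_\beta \in \P_{n'} \subseteq \P_{n-1} \cup \P_{n-2}$, this contradicts I-realizability of $m^{(n-1)}$. To produce $\beta$, pick $n^*$ to be whichever of $n-1$, $n-2$ has the same parity as $k$ (exactly one does, since they differ by $1$); then $n^* - k$ is a positive even integer, at least $2$ because $k \leq n-3$. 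Choose any integer $M > \gamma_k$ and set
\[
\beta = (\gamma_1,\ldots,\gamma_k,\, M,\, M+1,\, M+2,\, M+3,\,\ldots,\, M+n^*-k-1),
\]
appending $(n^*-k)/2$ pairs of consecutive integers. Because $n^*$ and $k$ have the same parity, the convention "$\beta_1 = 0$ iff the length is odd" is compatible: it holds for $\beta$ iff it held for $\gamma$. Moreover the pairing on the first $k$ entries of $\beta$ is inherited from $\gamma$, and the appended consecutive-integer pairs fit the trailing pair structure of $\A_{n^*}$. Hence $\beta \in \A_{n^*}$ and $P_\beta \in \P_{n^*}$ is the required polynomial.

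The main obstacle is the combinatorial verification that the constructed $\beta$ really lies in $\A_{n^*}$; this is routine once one has chosen $n^*$ so that $n^* - k$ is even, allowing the tail of $\beta$ to be a concatenation of adjacent pairs disjoint from $\gamma$. No deeper argument is needed beyond this contradiction-and-extension strategy.
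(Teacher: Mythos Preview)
Your proof is correct and follows essentially the same approach as the paper: both argue by contradiction, use a realizing measure $\mu$ to conclude $\supp\mu$ lies in the zeros of $Q$, and then multiply $Q$ by a suitable polynomial to land in $\P_{n-1}\cup\P_{n-2}$, contradicting I-realizability. The only difference is cosmetic: the paper phrases the extension as choosing $T\in\P_{n-1-k}$ or $T\in\P_{n-2-k}$ (whichever has even degree) with zeros distinct from those of $Q$, whereas you explicitly append consecutive-integer pairs above $\gamma_k$ to build $\beta\in\A_{n^*}$; these are the same construction.
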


\begin{proof} By definition, $L_Q(m) >0$ if $Q \in \P_{n-2}\cup\P_{n-1}$.
Assume then that for some $k\in\{1,\ldots,n-3\}$ and $Q\in\P_k$,
$L_Q(m)=0$. As $m^{(n-1)}$ is realizable, there exists a realizing measure
$\mu$ on $\bbn_0$ and, as in the proof of \clem{BR}, the support of $\mu$
must be contained in the zero set of $Q$.  If $(n-1)-k$ is even,
respectively odd, choose a polynomial $T$ from $\P_{n-1-k}$, respectively
$\P_{n-2-k}$, all the zeros of which are distinct from those of $Q$, so
that $TQ$ belongs to $\P_{n-1}$, respectively $\P_{n-2}$.  Because the
support of $\mu$ is contained in the zero set of $TQ$, $L_{TQ}(m)=0$, a
contradiction.  \end{proof}

The next theorem gives  an inductive procedure to determine whether a given
$n$-tuple $m$ is realizable and, in addition, whether I-realizable or
B-realizable.  

\begin{theorem}\label{IterProc} Suppose $n\in\NN$ and
  $m=m^{(n)}=(m_1,\ldots,m_n)\in\RR^n$. Then:

 \smallskip\noindent
 (a) If  $n=1$ then $m=(m_1)$ is I-realizable if
$m_1>0$, $B$-realizable if $m_1=0$, and not realizable if $m_1<0$.

 \smallskip\noindent
 (b) If $n\ge2$ and $m^{(n-1)}$ is not realizable then $m^{(n)}$ is
not realizable.

 \smallskip\noindent
 (c) If $n\ge2$ and $m^{(n-1)}$ is I-realizable then a minimizing
polynomial $P^{(m)}_n$, as in \cthm{biglem}(a), exists, and $m^{(n)}$
is realizable if and only if
 \be\label{ineq}
L_{P^{(m)}_n}(m)\ge0.
 \ee
   In this case, $m^{(n)}$ is I-realizable if the inequality \eqref{ineq}
is strict and B-realizable if equality holds.

 \smallskip\noindent
 (d) If $n\ge2$ and $m^{(n-1)}$ is B-realizable, so that
$L_P(m)=0$ for some $P\in\P_{n-i}$ with $i\in\{1,2\}$,
then $m^{(n)}$ is realizable, and in particular B-realizable, if and only
if $L_{x^iP}(m)=0$.  \end{theorem}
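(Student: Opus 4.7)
The overall plan is to treat the four parts of the theorem in order, observing that almost all of the technical work has already been done in the preceding lemmas; this statement is primarily a synthesis.

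Part (a) is immediate: for $n=1$ the only element of $\P_1$ is $x$, so I-realizability reduces to $m_1>0$ and B-realizability to $m_1=0$, both of which are achieved by the obvious measures on $\bbn_0$, while $m_1<0$ cannot be the first moment of any probability measure on $\bbn_0$. Part (b) is also immediate, since any measure realizing $m^{(n)}$ automatically realizes $m^{(n-1)}$.

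For part (c), I would first use \clem{nonzero} to deduce that I-realizability of $m^{(n-1)}$ implies $L_Q(m)>0$ for every $Q\in\bigcup_{k=1}^{n-1}\P_k$; in particular the hypothesis of \cthm{biglem}(a) is met, so the minimizing polynomial $P_n^{(m)}\in\P_n$ exists. Necessity of \eqref{ineq} is then trivial: if $\mu$ realizes $m^{(n)}$ on $\bbn_0$, then $L_{P_n^{(m)}}(m)=E_\mu[P_n^{(m)}(X)]\ge0$ since $P_n^{(m)}\ge0$ on $\bbn_0$. For sufficiency I would invoke \cthm{necsuff}: condition \eqref{cond2} follows at once from the minimizing property, because $L_P(m)\ge L_{P_n^{(m)}}(m)\ge0$ for all $P\in\P_n$, while condition \eqref{cond5} holds vacuously since \clem{nonzero} guarantees $L_P(m)>0$ for every $P\in\P_{n-1}$, so the ``if'' clause in \eqref{cond5} is never triggered. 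The I/B classification then follows from \cdef{IB}: strict inequality in \eqref{ineq}, combined with the strict positivities above, keeps every $L_P$ strictly positive on $\P_n\cup\P_{n-1}$ (I-realizable), whereas equality in \eqref{ineq} exhibits a zero of $L_P$ for some $P\in\P_n$ (B-realizable).

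Part (d) is a direct application of \clem{BR}(b): the hypothesis provides a polynomial $P\in\P_{n-i}$ with $L_P(m)=0$, and \clem{BR}(b) asserts precisely that $m^{(n)}$ is realizable if and only if $L_{x^iP}(m)=0$, with the realizing measure (when it exists) given by the unique one from \clem{BR}(a). No single step presents a genuine obstacle, since all the heavy lifting is in \cthm{biglem}, \cthm{necsuff}, and Lemmas \ref{nonzero} and \ref{BR}; the one point deserving care is the verification that condition \eqref{cond5} is vacuous under the I-realizability hypothesis in part (c), which is exactly where \clem{nonzero} is essential.
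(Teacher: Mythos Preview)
Your argument is essentially the paper's, and the synthesis via \cthm{necsuff}, \cthm{biglem}(a), \clem{nonzero}, and \clem{BR} is carried out correctly.  There is one small omission in part~(d): the theorem asserts not only that $m^{(n)}$ is realizable iff $L_{x^iP}(m)=0$, but that when realizable it is necessarily \emph{B}-realizable, and you do not address this.  The paper's argument is that if $m^{(n)}$ were I-realizable, then \clem{nonzero} (applied with $n$ replaced by $n+1$) would force $L_Q(m)>0$ for every $Q\in\bigcup_{k=1}^n\P_k$, in particular for the given $P\in\P_{n-i}$, contradicting $L_P(m)=0$.  Once you add this sentence the proof is complete and matches the paper's.
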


\begin{proof} (a) and (b) are trivial. To verify (c) we note that because
$m^{(n-1)}$ is I-realizable, $P^{(m)}_n\in\P_n$ exists by
\cthm{biglem}(a) and \clem{nonzero}, and for any $P\in\P_n$,
$L_P(m)\ge L_{P^{(m)}_n}(m)$.  The conclusion then follows immediately
from \cdef{IB} and \cthm{necsuff}.  Finally, for (d), if $m^{(n-1)}$ is
B-realizable then \clem{BR} immediately gives the stated criterion for
realizability, and this must be B-realizability, since by definition
$L_P(m)=0$ for some $P\in\P_{n-1}\cup\P_{n-2}$ and this, with
\clem{nonzero}, would contradict I-realizability of $m^{(n)}$.
\end{proof}

\begin{remark}\label{misc} (a) To make the inductive procedure of
\cthm{IterProc} effective we must be able to determine explicitly the
polynomials $P^{(m)}_n$ for $n\ge2$.  In the next sections we do this
explicitly for $n=2,\ldots,5$ and give a recursive construction for $n\ge6$.

 \smallskip\noindent
 (b) As remarked above, I-realizable moment vectors lie in the interior of
the set of all realizable moments, and B-realizable ones on the boundary.
The first statement follows from the fact that, as explained in the proof
of \cthm{biglem}, the infimum of $L_P(m)$ over $P\in\P_n$ is in fact a
minimum over a finite set of polynomials, and this set can be chosen
uniformly in $m$ on compact sets; thus if this minimum is strictly positive
for $m$ it will be positive also for nearby moment vectors.  On the other
hand, if $m$ is B-realizable then $L_P(m)=0$ for some $P\in\P_{n-i}$ with
$i=0$ or 1, and decreasing $m_{n-i}$ by an arbitrarily small amount gives a
moment vector $m'$ with $L_P(m')<0$, so that $m'$ is not realizable.

 \smallskip\noindent
 (c) If $m=m^{(n)}$ is B-realizable then there is a minimum index $k$ such
that $L_Q(m)=0$ for some $Q\in\P_k$, so that $m^{(j)}$ is I-realizable if
$j<k$ and B-realizable if $j\ge k$.  Then following the ideas of the proof
of \clem{BR} one sees immediately that the support of the unique measure
realizing $m$ is contained in the set of zeros of $Q$.  Moreover, this
support cannot be a subset of the zero set of any polynomial
$\Qt\in\P_j$ with $j<k$, by the minimality of $k$, and in particular
must contain at least $k/2$ points if $k$ is even and at least $(k+1)/2$,
including 0, if $k$ is odd.  

 \smallskip\noindent
 (d) There may be several minimizing polynomials for $m^{(n)}$, i.e.,
polynomials which satisfy the conclusion of \cthm{biglem}(a), but the set
of such polynomials does not depend on $m_n$.  For if
$\mt=(m_1,\ldots,m_{n-1},\mt_n)$ then for $Q\in\P_n$,
$L_Q(m)-m_n=L_Q(\mt)-\mt_n$, from which it follows that
$L_P(m)=\inf_{Q\in\P_n}L_Q(m)$ if and only if
$L_P(\mt)=\inf_{Q\in\P_n}L_Q(\mt)$.

 \smallskip\noindent
 (e) We will discuss the nonuniqueness of the minimizing polynomial for
$m^{(n)}$ when $m^{(n-1)}$ is I-realizable, the only case for which
this polynomial is needed in \cthm{IterProc}.  Let $\mt_n\in\bbr$ be defined
by the condition $L_{P^{(m)}_n}(\mt)=0$, where
$\mt^{(n)}=(m_1,\ldots,m_{n-1},\mt_n)$ and $P_n^{(m)}$ is some minimizing
polynomial for $m^{(n)}$; note that by (d) this condition is independent
of the choice of $P^{(m)}_n$.  Then $\mt^{(n)}$ is B-realizable, so that
by \clem{BR} there is a unique realizing measure $\mu$ for $\mt^{(n)}$
with support in the zero set of $P^{(m)}_n$.  When the support of $\mu$
contains fewer than $n$ points there will be several minimizing
polynomials, specifically, those polynomials in $\P_n$ whose zero set
contains this support. 

 \par\smallskip\noindent
 (f) Some of the results which have been obtained above by reference to a
realizing measure for $m$ may also be obtained or strengthened by purely
algebraic means.  See \capp{algebraic}.  \end{remark}

\section{Realizability for $n=2,3$}\label{simple}

\cthm{IterProc} gives simple and explicit realizability conditions when
$n=1$. In this section we obtain the polynomials $P_n^{(m)}$ when $n=2$ and
$3$ and thus give simple conditions for these values of $n$.  We first
define
 \be\label{k1a}
k_1:=\lfloor m_1\rfloor \quad\text{and}\quad  \theta_1:=m_1-k_1,
 \ee
 and, if $m_1>0$, 
 \be\label{k2a}
 k_2:=\left\lfloor\frac{m_2}{m_1}\right\rfloor \quad\text{and}\quad 
 \theta_2:=\frac{m_2}{m_1}-k_2.
 \ee

\begin{theorem}\label{Case23} Suppose that $n$ is 2 or 3 and that we are 
given the moment vector $m=m^{(n)}\in\RR^n$. Then

 \smallskip\noindent
 (a) If $n=2$ and $m^{(1)}$ is I-realizable (that is, $m_1>0$) then
 \be\label{Y2b}
P_2^{(m)}(x)=(x-k_1)(x-k_1-1),
 \ee
and $m^{(2)}$ is realizable if and only if
  \be\label{Y2}
 m_2-m_1^2\geq\theta_1(1-\theta_1).
 \ee
 In particular, $m^{(2)}$ is I-realizable if the inequality in \eqref{Y2}
is strict, and B-realizable if equality holds there.

 \smallskip\noindent
 (b) If $n=3$ and $m^{(2)}$ is I-realizable (that is, by (a), if $m_1>0$
and $m_2-m_1^2>\theta_1(1-\theta_1)$) then
 \be\label{Y3b}
P_3^{(m)}(x)=x(x-k_2)(x-k_2-1),
 \ee
and $m^{(3)}$ is realizable iff
  \be\label{Y3}
 \frac{m_3}{m_1}-\left(\frac{m_2}{m_1}\right)^2\ge\theta_2(1-\theta_2),
 \ee
 In particular, $m^{(3)}$ is I-realizable if the inequality in \eqref{Y3}
is strict, and B-realizable otherwise.  \end{theorem}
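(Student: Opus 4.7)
The plan is to apply \cthm{IterProc}(c): under the stated I-realizability hypothesis on $m^{(n-1)}$, a minimizer $P_n^{(m)}\in\P_n$ of $P\mapsto L_P(m)$ exists by \cthm{biglem}(a), and realizability of $m^{(n)}$ is equivalent to $L_{P_n^{(m)}}(m)\ge0$. It remains to identify this minimizer explicitly and convert the inequality into the stated form. First I unpack $\A_n$: for $n=2$, $\P_2=\{P_j(x)=(x-j)(x-j-1):j\in\bbn_0\}$, while for $n=3$ the constraint $\alpha_1=0$ forces $\P_3=\{P_j(x)=x(x-j)(x-j-1):j\ge1\}$ (the restriction $j\ge1$ ensuring distinct roots). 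Expanding yields
\[
L_{P_j}(m)=\begin{cases}m_2-(2j+1)m_1+j(j+1),&n=2,\\[2pt] m_3-(2j+1)m_2+j(j+1)m_1,&n=3.\end{cases}
\]

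Next I would perform the one-dimensional discrete minimization. The forward differences are
\[
L_{P_{j+1}}(m)-L_{P_j}(m)=\begin{cases}2(j+1-m_1),&n=2,\\[2pt] 2m_1(j+1-m_2/m_1),&n=3.\end{cases}
\]
These are affine and strictly increasing in $j$ (using $m_1>0$ in the $n=3$ case), so $j\mapsto L_{P_j}(m)$ is discretely convex and its minimum over the admissible range is attained at $j=k_1$ for $n=2$ and $j=k_2$ for $n=3$. The index $k_1\ge0$ is automatically admissible since $m_1>0$. For $n=3$ I must check separately that $k_2\ge1$: the I-realizability hypothesis $m_2-m_1^2>\theta_1(1-\theta_1)$ combined with a short case split on whether $m_1<1$ (where $\theta_1=m_1$) or $m_1\ge1$ shows $m_2>m_1$ in both cases, hence $m_2/m_1>1$ and $k_2\ge1$.

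With the minimizers identified as in \eqref{Y2b} and \eqref{Y3b}, I evaluate $L_{P_n^{(m)}}(m)\ge0$. For $n=2$ this reads $m_2-(2k_1+1)m_1+k_1(k_1+1)\ge0$; substituting $m_1=k_1+\theta_1$ and simplifying produces exactly \eqref{Y2}. For $n=3$ the corresponding inequality is $m_3-(2k_2+1)m_2+k_2(k_2+1)m_1\ge0$; dividing by $m_1>0$ and substituting $m_2/m_1=k_2+\theta_2$ reduces it to \eqref{Y3}. In both cases the I- versus B-realizability dichotomy follows from whether the inequality is strict, by \cthm{IterProc}(c).

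The only step requiring more than routine computation is the admissibility check for $n=3$: absent the I-realizability hypothesis, the unconstrained minimizer over all $j\ge0$ could land at $j=0$, which would produce $x^2(x-1)\notin\P_3$ due to the repeated zero at $0$. The assumption that $m^{(2)}$ is I-realizable is precisely what pushes $m_2/m_1$ above $1$ and thus places the minimizer in the admissible range. Everything else reduces to straightforward polynomial expansion and algebraic rearrangement.
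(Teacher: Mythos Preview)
Your proof is correct and follows essentially the same route as the paper: invoke \cthm{IterProc}(c), minimize $L_P(m)$ over the explicitly parametrized families $\P_2=\{(x-j)(x-j-1):j\ge0\}$ and $\P_3=\{x(x-j)(x-j-1):j\ge1\}$, and then simplify the resulting inequality. The only tactical difference is that the paper verifies optimality by computing $L_{T_k}(m)-L_{T_{k_1}}(m)=(k-k_1)^2\bigl(1+(1-2\theta_1)/(k-k_1)\bigr)\ge0$ directly (and similarly for $n=3$), whereas you use forward differences and discrete convexity; both arguments are equally elementary. Your explicit verification that $k_2\ge1$ (so that $S_{k_2}\in\P_3$) is a point the paper passes over in silence, so your treatment is in fact slightly more complete there.
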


\begin{proof} (a) Recall that $\P_2$ is the set of all polynomials of the
form $T_k(x)=(x-k)(x-k-1)$ with $k\in\NN_0$.  But for any $k\in\bbn_0$ with
$k\ne k_1$, a simple computation shows that 
 \be\label{kk1}
  L_{T_k}(m)-L_{T_{k_1}}(m)
    =(k-k_1)^2\left(1+\frac{1-2\theta_1}{k-k_1}\right)\ge0, 
 \ee
 where the inequality follows from $|k-k_1|\ge1\ge|1-2\theta_1|$. Thus,
$P_2^{(m)}=T_{k_1}$.  Moreover,
 \be\label{k1}
L_{T_{k_1}}(m)=m_2-(2k_1+1)m_1+k_1(k_1+1)=m_2-m_1^2-\theta_1(1-\theta_1),
 \ee
 and \eqref{Y2} follows from \cthm{IterProc}.

 \smallskip\noindent
 (b) $\P_3$ is the set of all polynomials $S_k(x)=x(x-k)(x-k-1)$ with
$k\in\NN$.  In parallel to \eqref{kk1} and \eqref{k1}, we have
that for any $k\ge1$ with $k\ne k_2$,
\be\label{kk2}
  L_{S_k}(m)-L_{S_{k_2}}(m)
    =(k-k_2)^2\left(1+\frac{1-2\theta_2}{k-k_2}\right)m_1\ge0; 
 \ee
and
 \begin{eqnarray}\label{k2}
 L_{S_{k_2}}(m)&=&m_3-(2k_2+1)m_2+k_2(k_2+1)m_1\nonumber \\
  &=&m_3-\left(\frac{m_2^2}{m_1}\right)-\theta_2(1-\theta_2)m_1.
 \end{eqnarray}
 Thus $P_3^{(m)}=S_{k_2}$ and \eqref{k2}, with \cthm{IterProc}, yields
\eqref{Y3}.  \end{proof}

We note that \eqref{Y2} was given by Percus and Yamada
\cite{PercusYevick,Percus,Yamada} as a necessary condition for
realizability.

\begin{remark}\label{B-rbl} \cthm{Case23} covers, for $n=2$ and 3, the determination of
realizability when $m^{(n-1)}$ is I-realizable.  The method of making this
determination when $m^{(n-1)}$ is B-realizable is implicit in
\cthm{IterProc}, but for clarity we discuss this briefly here.  

 \smallskip\noindent
 (a) When $n=2$, $m^{(1)}=(m_1)$ is B-realizable iff $m_1=0$, that is, iff
$L_x(m)=0$.  Then \cthm{IterProc}(d) tells us that $m^{(2)}=(m_1,m_2)$ is
realizable (and necessarily B-realizable) iff $L_{x^2}(m)=m_2=0$.

 \smallskip\noindent
 (b) When $n=3$, $m^{(2)}=(m_1,m_2)$ is B-realizable if either
(i)~$m_1=m_2=0$ (here we have used (a)) or (ii)~$m_1>0$ and
$L_{P_2^{(m)}}(m)=0$, i.e., by \cthm{Case23}(a),
$m_2-m_1^2=\theta_1(1-\theta_1)$.  \cthm{IterProc}(d) tells us that in case
(i), $m^{(3)}=(m_1,m_2,m_3)$ is realizable iff $L_{x^3}(m)=m_3=0$, and in
case (ii), $m^{(3)}$ is realizable iff
$L_{xP_2^{(m)}}(m)=L_{x(x-k_1)(x-k_1-1)}(m)=0$, i.e., if
 \be\label{B3}
m_3=(2k_1+1)m_2-k_1(k_1+1)m_1. 
 \ee
  It can in fact be shown that in case (ii), $k_2=k_1$ unless $0<m_1<1$,
when $k_1=0$, $k_2=1$, $\theta_1=m_1$, and $\theta_2=0$; in any case
\eqref{B3} holds iff \eqref{Y3} holds with equality.  \end{remark}

\section{Realizability for $n\ge4$}\label{CaseFourge}

In this section we first obtain more detailed properties of the polynomials
$P_n^{(m)}$ for general $n$, and then derive from these, in
\csect{subsecind}, an iterative procedure which reduces the computation of
$P_n^{(m)}$ to the solution of a moment problem of degree $n-2$.  In
\csect{subsecn4} we specialize to the cases $n=4$ and $n=5$, in which we
can give very explicit conditions for realizability.  Throughout we assume
that we are given a realizable moment vector
$m^{(n-1)}=(m_1,\ldots,m_{n-1})$ (recall from \crem{misc}(d) that
$P_n^{(m)}$ does not depend on $m_n$).  \cthm{biglem} implies that
$P_n^{(m)}$ is defined as long as $m^{(n-2)}$ is I-realizable, but we will
assume throughout this section that in fact $m^{(n-1)}$ is I-realizable;
this assumption, which simplifies the construction of $P_n^{(m)}$, is
justified by the fact that it is only this case which is needed for the
inductive scheme outlined in \cthm{IterProc}.

We first consider the well-understood Stieltjes problem, that is, the
problem of realizing a moment vector on $\bbr_+$, and for this purpose
recall from \csect{intro} and \capp{CFreal} the definitions of
I-realizability and B-realizability for this problem, and of the Hankel
matrices (see \eqref{hankel}).  In particular, for $\mh_n\in\bbr$ we let
$\Ah(k)$, $\Bh(k)$, and $\Ch_n$ be the Hankel matrices formed
from the moment vector $\mh^{(n)}:=(m_1,\ldots,m_{n-1},\mh_n)$.

 \begin{theorem}\label{ThmA} Suppose that $m^{(n-1)}$ is I-realizable for
the Stieltjes problem and that $\mh_n$ is the smallest value such that
$\Ch_n\ge0$.  Then $\mh^{(n)}$ is realizable for the Stieltjes problem by a
unique measure $\nu$; moreover, if $n$ is even with $n=2k$ then
$|\supp\nu|=k$ and $0\notin\supp\nu$, while if $n$ is odd with $n=2k+1$
then $|\supp\nu|=k+1$ and $0\in\supp\nu$.\end{theorem}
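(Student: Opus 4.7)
The plan is to combine the inductive Stieltjes framework recalled in the introduction (and detailed in \capp{CFreal}) with a direct analysis of the kernel of $\Ch_n$.

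First I would establish existence and uniqueness of $\mh_n^*$ via a Schur-complement argument. Since $m^{(n-1)}$ is I-realizable we have $C_{n-2}>0$, and $\Ch_n$ is obtained from $C_{n-2}$ by appending a single row and column whose only $\mh_n$-dependence is in the bottom-right entry; hence $\det\Ch_n$ is affine in $\mh_n$ with slope $\det C_{n-2}>0$, and the standard Schur criterion gives $\Ch_n\ge 0$ iff $\mh_n\ge\mh_n^*$, where at $\mh_n^*$ the matrix $\Ch_n$ is positive semidefinite of corank exactly one. Realizability of $\mh^{(n)}$ then follows at once from \ccor{needed}, providing a B-realizing measure $\nu$; the degenerate case $n=1$ is handled trivially by $\nu=\delta_0$.

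The heart of the argument is then to show that $\nu$ is unique and to identify $\supp\nu$. The key observation is that for $X_k(x)=(1,x,\ldots,x^k)^T$ and any $\w\in\bbr^{k+1}$,
\[
\w^T\,\Ah(k)\,\w \;=\; \int (\w\cdot X_k)^2\,d\nu, \qquad
\w^T\,\Bh(k)\,\w \;=\; \int x\,(\w\cdot X_k)^2\,d\nu,
\]
so a vector $\w^*$ spanning $\ker\Ch_n$ yields, through $P(x)=\w^*\cdot X_k(x)$ (of degree exactly $k$ since $C_{n-2}>0$ forces $w^*_k\ne0$), the constraint $\supp\nu\subseteq\{x\ge0:P(x)=0\}$ when $n=2k$, and $\supp\nu\subseteq\{0\}\cup\{x>0:P(x)=0\}$ when $n=2k+1$. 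This gives the upper bound $|\supp\nu|\le k$ or $k+1$.

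The matching lower bound and the verdict on whether $0\in\supp\nu$ will both come from applying I-realizability of $m^{(n-1)}$: a standard Vandermonde argument shows that if $|\supp\nu|$ were too small, one of the strictly positive Hankel matrices $A(k-1)$, $B(k-1)$, or $A(k)$ would have a nontrivial kernel, a contradiction. In the even case $n=2k$, assuming $0\in\supp\nu$ reduces the nonzero part of $\nu$ to $k-1$ atoms and violates $B(k-1)>0$; in the odd case $n=2k+1$, assuming $0\notin\supp\nu$ leaves at most $k$ atoms and violates $A(k)>0$. Once $\supp\nu$ is fixed, the Vandermonde invertibility of the matrix $(a_j^l)$ determines the weights, giving uniqueness of $\nu$. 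The main obstacle I foresee is not a single conceptual difficulty but the bookkeeping across the two parities of $n$, i.e.\ keeping straight which of $A(k-1),B(k-1),A(k),B(k)$ controls which piece of the support analysis.
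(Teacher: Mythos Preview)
Your proposal is correct and follows essentially the same approach as the paper. The paper's proof simply cites \cprop{rt}(c), whose proof in \capp{CFreal} proceeds exactly as you outline: the null vector of the singular Hankel matrix $\Ch_n$ yields a polynomial whose zero set contains $\supp\nu$, and the positive definiteness of the lower-order Hankel matrices $C_i$, $i<n$, is then used (via the same identities $E_\nu[g(X)^2]=Q^TA(\cdot)Q$ and $E_\nu[Xh(X)^2]=Q^TB(\cdot)Q$ that you invoke) to force the exact size of $\supp\nu$ and to decide whether $0$ belongs to it.
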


\begin{proof} This is a fairly immediate consequence of the results in
\cite{CF91}; for details see \cprop{rt}(c).\end{proof}

\noindent In the remainder of this section we will let $\mh^{(n)}$ and
$\nu$ be as in \cthm{ThmA}.  We will write $\supp\nu=\{y_1,\ldots,y_k\}$
when $n=2k$ and $\supp\nu=\{0,y_1,\ldots,y_k\}$ when $n=2k+1$, where
$y_1<y_2<\cdots<y_k$ and, if $n$ is odd, $0<y_1$.  It is important to note
that $\supp\nu$ can be computed explicitly as the set of roots of a certain
polynomial determined by $m^{(n-1)}$; see \crem{support}.

Our approach to the realization problem on $\bbn_0$ is quite parallel to
the above.  Suppose that $P_n^{(m)}(x)=P_\alpha(x)$, with
$\alpha=(\alpha_1,\ldots,\alpha_n)$, and let $\mt_n\in\bbr$ be the unique
number for which, with $\mt^{(n)}=(m_1,\ldots,m_{n-1},\mt_n)$,
$L_{P^{(m)}_n}(\mt)=0$.  Then $\mt_n$ is the smallest value for which
$\mt^{(n)}$ is realizable; moreover (see \crem{misc}(e)), $\mt^{(n)}$ is
B-realizable and by \clem{BR} there is a unique realizing measure $\mu$ for
$\mt^{(n)}$, with $\supp\mu\subset\{\alpha_1,\ldots,\alpha_n\}$.

\begin{remark}\label{easy} (a) If $\supp\nu\subset\bbn_0$ then
$\mt_n=\mh_n$ and $m^{(n)}$ is realizable on $\bbn_0$ if and only if it is
realizable for the Stieltjes problem.  We will therefore typically assume
below that $\supp\nu\not\subset\bbn_0$.

 \par\smallskip\noindent
 (b) When $\supp\nu\not\subset\bbn_0$ there must be a point
$t\in\bbn_0$ with $t\in\supp\mu$ and $t\notin\supp\nu$, since
$|\supp\mu|\ge|\supp\nu|$ by \crem{misc}(c).

 \smallskip\noindent
 (c) If $n$ is odd then the I-realizability of $m^{(n-1)}$, together with
 \crem{misc}(c), implies that $0\in\supp\mu$.  
\end{remark}

  Now one may easily check when $n=2$, $\nu=\delta_{m_1}$, and when
$n=3$, $\nu=(1-m_1^2/m_2)\delta_0+(m_1/m_2^2)\delta_{m_1}$, and one may
determine the corresponding values of $\alpha$ (where again
$P_n^{(m)}(x)=P_\alpha(x)$) from \cthm{Case23}.  This leads to:
 \begin{equation}\label{lown}
\hskip15pt\begin{matrix}
n=2:&\hfill\supp\nu=\{m_1\},\hfill& 
   \hfill\alpha=(\lfloor m_1\rfloor,\lfloor m_1\rfloor+1);\hfill\\
n=3:&\hfill\supp\nu=\{0,m_2/m_1\},\hfill& 
   \hfill\alpha
   =(0,\lfloor m_2/m_1\rfloor,\lfloor m_2/m_1\rfloor+1).\hfill\end{matrix}
 \end{equation}
 These two examples thus suggest a close connection between $\supp\nu$ and
the values of $\alpha$ (which are the possible points of $\supp\mu$); one
might hope, for example, that when $n=4$,
$\alpha=\{\lfloor
y_1\rfloor,\lfloor y_1\rfloor+1,\lfloor y_2\rfloor,\lfloor
y_2\rfloor+1\}$.
Consideration of explicit examples shows that this is not always the case,
but, as we now show, knowledge of $\supp\nu$ does give some
information about $\supp\mu$.

We first prove an interleaving property.  Here and below we will use the
fact that if $Q(x)$ is a polynomial of degree at most $n-1$ then
$E_\mu[Q(X)]=E_{\nu}[Q(X)]=L_Q(m)$.  From this it follows that if
$Q(x)\ge0$ either on $\supp\nu$ or on $\supp\mu$ then $L_Q(m)\ge0$, and if
also $Q(x_0)>0$ at some point $x_0$ of $\supp\nu$ or of $\supp\mu$ then
$L_Q(m)>0$.  Similarly, if $Q(x)=0$ on $\supp\nu$ or on $\supp\mu$ then
$L_Q(m)=0$.

\begin{proposition}\label{interleave} Suppose that
$\supp\nu\not\subset\bbn_0$. Then if $n=2k$ or $n=2k+1$ there exist points
$\eta_0,,\ldots,\eta_k$ in $\supp\mu$ such that
 \be\label{eta}
\eta_0< y_1<\eta_1< y_2<\cdots< y_k< \eta_k.
 \ee
 and if $n$ is odd, $0<\eta_0$.  In particular, $|\supp\mu|>|\supp\nu|$.
\end{proposition}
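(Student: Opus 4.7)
The plan is to argue by contradiction, exploiting the fact that $\mu$ and $\nu$ share the same first $n-1$ moments, so $E_\mu[Q] = E_\nu[Q] = L_Q(m)$ whenever $\deg Q \le n-1$.

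First I would establish the cardinality bound $|\supp\mu| \ge k+1$. Write $\supp\mu = \{t_1,\ldots,t_m\}$ with positive weights and set $U(x) = \prod_{i=1}^m(x-t_i)$. In the odd case $n = 2k+1$, assuming $m \le k$ gives $\deg(U^2) = 2m \le n-1$, so $0 = E_\mu[U^2] = E_\nu[U^2]$; positivity of $\nu$'s weights then forces $U \equiv 0$ on $\supp\nu$, impossible as $|\supp\nu| = k+1 > m$. The same computation in the even case $n = 2k$ yields $m \ge k$. To rule out $m = k$ in the even case, note that $U$ is then monic of degree $k$ and $E_\mu[x^jU] = 0$ and hence $E_\nu[x^jU] = 0$ for $j = 0,\ldots,k-1$; writing $\nu = \sum_l c_l\delta_{y_l}$ with $c_l > 0$ and distinct $y_l$, the resulting Vandermonde system in the unknowns $c_l U(y_l)$ forces $U(y_l) = 0$ for each $l$, so $U = \prod_l(x-y_l)$ and $\supp\mu = \supp\nu$; matching the remaining weights then gives $\mu = \nu$, contradicting $\supp\mu \subseteq \bbn_0$ and $\supp\nu \not\subseteq \bbn_0$. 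Hence $m \ge k+1$ in either parity.

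Second, suppose for contradiction that no interleaving as in \eqref{eta} exists (with $\eta_0 > 0$ in the odd case). Then some open interval among $(-\infty,y_1),(y_1,y_2),\ldots,(y_k,+\infty)$ (even case) or $(0,y_1),(y_1,y_2),\ldots,(y_k,+\infty)$ (odd case) has empty intersection with $\supp\mu$. For each such missing gap I would construct a polynomial $Q$ of degree at most $n-1$ that vanishes on $\supp\nu$ and satisfies $Q \le 0$ on $\supp\mu$. For an interior gap $(y_j, y_{j+1})$, take
\[
Q(x) = -(x-y_j)(x-y_{j+1})\prod_{i\ne j,j+1}(x-y_i)^2,
\]
with an extra factor of $x$ in the odd case; for the leftmost gap use $-(x-y_1)\prod_{i>1}(x-y_i)^2$, and for the rightmost gap use $(x-y_k)\prod_{i<k}(x-y_i)^2$ (again each multiplied by $x$ in the odd case). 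A direct count confirms $\deg Q \le n-1$, and $Q$ vanishes on $\supp\nu$ by construction; a short sign check, using that $\supp\mu \subseteq \bbn_0$ lies entirely outside the missing gap, gives $Q \le 0$ on $\supp\mu$. Then $E_\mu[Q] = E_\nu[Q] = 0$, and positivity of the weights of $\mu$ forces $Q = 0$ on $\supp\mu$, so $\supp\mu \subseteq \supp\nu$. Consequently $|\supp\mu| \le |\supp\nu \cap \bbn_0| \le |\supp\nu| - 1 \le k$, contradicting the bound $|\supp\mu| \ge k+1$ from the previous step.

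The main obstacle I foresee is the bookkeeping of all the gap cases (interior, leftmost, rightmost, together with the extra factor $x$ in the odd case so that $Q$ also vanishes at $0 \in \supp\nu$ while keeping $\deg Q \le n-1$), combined with the subtler exclusion of the borderline case $m = k$ in the even case, which requires the orthogonality/Vandermonde argument rather than the straightforward $U^2$ computation that suffices in the odd case.
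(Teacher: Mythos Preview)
Your argument is correct, and it differs from the paper's in two respects.  First, the paper does not prove $|\supp\mu|\ge k+1$ from scratch; it invokes the earlier observation (Remark~\ref{misc}(c) and Remark~\ref{easy}(b)) that $|\supp\mu|\ge|\supp\nu|$ and that $\supp\mu\setminus\supp\nu$ is nonempty whenever $\supp\nu\not\subset\bbn_0$.  Your direct derivation via $U^2$, together with the Vandermonde/orthogonality step ruling out $|\supp\mu|=k$ in the even case, is a self-contained replacement for those remarks.  Second, and more interestingly, your test polynomials use squared factors $\prod_{i\ne j,j+1}(x-y_i)^2$ to force a uniform sign on all of $\bbr$ outside the missing gap, whereas the paper introduces auxiliary points $z_l,z_l'\in(y_l,y_{l+1})$ chosen so that $\supp\mu$ avoids $(y_l,z_l]\cup[z_l',y_{l+1})$, and builds polynomials such as
\[
Q_j(x)=\prod_{l=1}^k(x-y_l)\,\prod_{l=1}^{j-1}(x-z_l)\,\prod_{l=j+1}^{k-1}(x-z_l').
\]
The paper's $Q_j$ is nonnegative on $\supp\mu$ and strictly positive at the point $t\in\supp\mu\setminus\supp\nu$, giving the contradiction $0=E_\nu[Q_j]=E_\mu[Q_j]>0$ directly; your $Q$ instead yields $Q\le0$ on $\supp\mu$, from which $E_\mu[Q]=0$ forces $\supp\mu\subset\supp\nu$ and you then appeal to the cardinality bound.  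Both routes have the same degree count; yours avoids the auxiliary points at the price of the extra Vandermonde step, while the paper's is shorter once Remarks~\ref{misc}(c) and~\ref{easy}(b) are available.
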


\begin{proof} For $j=1,\ldots,k-1$ we choose points
$z_j,z_j'\in(y_j,y_{j+1})$ such that no point of $\supp\mu$ lies any of the
intervals $(y_j,z_j]$ and $[z'_j,y_{j+1})$.  Suppose first that $n$ is
even, with $n=2k$.  To show that there must be an $\eta_j$ with
$\eta_j\in(y_j,y_{j+1})$, $j=1,\ldots,k-1$, we suppose not and consider the
polynomial
 \be
Q_j(x)=\prod_{l=1}^k(x-y_l)\prod_{l=1}^{j-1}(x-z_l)
    \prod_{l=j+1}^{k-1}(x-z_l').
 \ee
  Then $Q_j(x)=0$ for $x\in\supp\nu$, so $L_{Q_j}(m)=0$, but $Q_j(x)\ge0$
on $\supp\mu$, with $Q_j(t)>0$ for $t\in\supp\mu\setminus\supp\nu$ (see
\crem{easy}), so $L_{Q_j}(m)>0$, a contradiction.  For existence of
$\eta_0$ and $\eta_k$ we argue similarly from
 \be
Q_0(x)=\prod_{l=1}^k(x-y_l)\prod_{l=1}^{k-1}(x-z_l')\quad\text{and}\quad
 Q_k(x)=-\prod_{l=1}^k(x-y_l)\prod_{l=1}^{k-1}(x-z_l).
 \ee
 For $n=2k+1$ we consider similarly $\Qh_j(x)=xQ_j(x)$, $j=0,\ldots,k$.\end{proof}

The next result shows that knowledge of $y_1,\ldots,y_k$ tells us about at
least one of the pairs $(\alpha_i,\alpha_{i+1})=(\alpha_i,\alpha_i+1)$ in
the fashion suggested by \eqref{lown}.  To state it we write $Y_j:=\lfloor
y_j\rfloor$ for $j=1,\ldots,k$.

\begin{corollary}\label{one} Suppose that
  $\supp\nu\not\subset\bbn_0$. Then:
 
\smallskip\noindent
 (a) If $n=2k$  then for some $j$, $1\le j\le k$:
 \be\label{oneworks1}
\alpha_{2j-1}=Y_j,\ 
   \alpha_{2j}=Y_j+1,\ 
   \text{and}\ \{\alpha_{2j-1},\alpha_{2j}\}\subset\supp\mu;
 \ee
 (b)  If $n=2k+1$  then for some $j$, $1\le j\le k$:
 \be\label{oneworks2}
 \alpha_{2j}=Y_j,\
   \alpha_{2j+1}=Y_j+1,\
    \text{and}\ \{\alpha_{2j},\alpha_{2j+1}\}\subset\supp\mu.
 \ee
 \end{corollary}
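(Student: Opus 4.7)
The plan is to combine Proposition \ref{interleave} with a simple step-counting argument to pin down a pair of roots of $P_n^{(m)}$ that both lies in $\supp\mu$ and equals $(Y_j, Y_j+1)$ for the correct index $j$.

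Setup. Write the roots of $P_n^{(m)}=P_\alpha$ in pair form: in case (a), $\alpha_{2j-1}=p_j$ and $\alpha_{2j}=p_j+1$ for $j=1,\ldots,k$, with $p_{j+1}\ge p_j+2$; in case (b), additionally $\alpha_1=0$, with $p_j:=\alpha_{2j}$. By Proposition \ref{interleave} there exist interleaving points $\eta_0<y_1<\eta_1<\cdots<y_k<\eta_k$ in $\supp\mu\subset\{\alpha_1,\ldots,\alpha_n\}$, with $\eta_0>0$ in case (b), which ensures that no $\eta_l$ equals the isolated root $\alpha_1=0$. Hence each $\eta_l$ belongs to exactly one pair, of index $j(l)\in\{1,\ldots,k\}$, and the map $l\mapsto j(l)$ is non-decreasing because both the $\eta$s and the pairs are strictly increasing in value.

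Counting and identification. Define
\[
\psi(j):=|\{l:j(l)\le j\}|-j,\qquad j=0,1,\ldots,k.
\]
Then $\psi(0)=0$, $\psi(k)=(k+1)-k=1$, and each increment $\psi(j)-\psi(j-1)$ equals (number of $\eta$s in pair $j$) $-1$, which lies in $\{-1,0,1\}$. Since $\psi$ is integer-valued with steps in $\{-1,0,1\}$ and moves from $0$ to $1$, there is a least $j^*$ with $\psi(j^*)\ge 1$; minimality forces $\psi(j^*-1)=0$ and $\psi(j^*)=1$, with increment $+1$. Thus pair $j^*$ contains two $\eta$s, necessarily $\eta_{l^*}=p_{j^*}$ and $\eta_{l^*+1}=p_{j^*}+1$ for some $l^*$; in particular $\{p_{j^*},p_{j^*}+1\}\subset\supp\mu$. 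Moreover $\psi(j^*)=1$ reads as $|\{l:j(l)\le j^*\}|=j^*+1$, and since the largest such $l$ is $l^*+1$ we obtain $l^*+1=j^*$. Hence $y_{j^*}=y_{l^*+1}\in(\eta_{l^*},\eta_{l^*+1})=(p_{j^*},p_{j^*}+1)$, so $Y_{j^*}=p_{j^*}$. This is the conclusion of (a); case (b) follows by the same argument.

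The main obstacle is not the existence of a pair entirely inside $\supp\mu$---this already follows from pigeonhole once one knows $|\supp\mu|\ge k+1$---but rather ensuring that such a pair can be picked with the $\alpha$-index and the $y$-index equal. The function $\psi$ captures precisely this alignment: its $+1$ increments encode doubled pairs, and its first crossing from $0$ to $1$ must occur at an index where the cumulative $\eta$-count exceeds the pair count by exactly one, yielding the coincidence $l^*+1=j^*$.
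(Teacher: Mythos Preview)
Your argument is correct and follows essentially the same route as the paper: both proofs invoke Proposition~\ref{interleave} and then run an index-alignment argument to locate a pair that captures two consecutive $\eta$'s with the matching $y$ in between. The paper's bookkeeping is slightly leaner---it tracks the set $S=\{j:\alpha_{2j-1}\le\eta_{j-1}\}$ and takes $j_0=\max S$ rather than introducing the counting function $\psi$---but the underlying transition idea is the same, and your crossing argument recovers exactly the identification $l^*+1=j^*$ that yields $Y_{j^*}=p_{j^*}$.
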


 \begin{proof}We treat the even case $n=2k$, using the notation of
\cprop{interleave}; the odd case is similar.  Define
 \be
S=\bigl\{j\in\{1,2,\ldots,k\}\,\big|\, \alpha_{2j-1}\le\eta_{j-1}\bigr\}.
 \ee
 Certainly $1\in S$, since
$\eta_0\in\supp\mu\subset\{\alpha_1,\ldots,\alpha_{2k}\}$.  Let
$j_0=\max S$.  If $j_0=k$ then necessarily $\alpha_{2k-1}=\eta_{k-1}$ and
$\alpha_{2k}=\eta_k$, so that, since $\alpha_{2k}=\alpha_{2k-1}+1$,
\eqref{oneworks1} holds with $j=k$. Suppose then that $j_0<k$.
Because $\alpha_{2j_0+1}>\eta_{j_0}$ we must have 
$\alpha_{2j_0-1}=\eta_{j_0-1}$, $\alpha_{2j_0}=\eta_{j_0}$, so that 
\eqref{oneworks1} holds with $j=j_0$.\end{proof}

\subsection{General inductive procedure}\label{subsecind}

We can now give a general procedure for the reduction of the truncated
moment problem of degree $n$, $n\geq 4$, to several truncated moment
problems of degree $n-2$.  For the moment we suppose that
$\supp\nu\not\subset\bbn_0$ and fix $l$ with
$1\le l\le k=\lfloor n/2\rfloor$.  From $n\ge4$ and our assumption that
$m^{(n-1)}$ is I-realizable it follows that
 \be
  m_2 - (2Y_l+1) m_{1} +Y_l(Y_l+1) >0.
 \ee
 We may thus define $c_l(m):=( m_2 - (2Y_l+1) m_{1} +Y_l(Y_l+1))^{-1}$ and
so the new moment vector
$M_l^{(n-2)}(m):= (M_{l,0}(m),\ldots,M_{l,n-2}(m))$ by
 \be\label{mtoM}
  M_{l,i}(m) := c_l(m) (m_{i+2} - (2Y_l+1) m_{i+1} +Y_l (Y_l+1)m_i),\quad
 \ee
 where the factor $c(m)$ insures that $M_{l,0}(m)=1$. For the moment we
suppress the dependence of $M(m)$ on $l$.  The definition is chosen so
that for any polynomial $Q$ of degree at most $n-2$,
 $$
 L_Q(M(m)) = c(m) L_{(x-Y_l)(x-Y_l-1)Q}(m).
 $$

\begin{lemma}\label{genlem} (a) If the probability measure $\sigma$
realizes $m^{(n)}$ on $\bbn_0$ then the probability measure $\sigma'$ with
$d\sigma'(x)=c(m)(x-Y_l)(x-Y_l-1)d\sigma(x)$ realizes $M^{(n-2)}(m)$ on
$\bbn_0$.
 \par\smallskip\noindent
 (b) $M^{(n-3)}(m)$ is I-realizable.
\end{lemma}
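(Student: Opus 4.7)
For (a) the verification is direct: the polynomial $(x-Y_l)(x-Y_l-1)$ is nonnegative on $\bbn_0$ (vanishing exactly at $Y_l$ and $Y_l+1$), so $d\sigma'$ defines a nonnegative measure on $\bbn_0$; by the definition of $c(m)$ its total mass is $c(m)(m_2-(2Y_l+1)m_1+Y_l(Y_l+1))=1$; and for $0\le i\le n-2$, direct substitution gives $E_{\sigma'}[X^i]=c(m)(m_{i+2}-(2Y_l+1)m_{i+1}+Y_l(Y_l+1)m_i)=M_{l,i}(m)$, so $\sigma'$ realizes $M_l^{(n-2)}(m)$.

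For (b), I would invoke the identity $L_P(M_l^{(n-3)}(m))=c(m)L_{(x-Y_l)(x-Y_l-1)P}(m)$ displayed just before the lemma (valid for $\deg P\le n-3$). Combined with $c(m)>0$, this reduces the claim to showing $L_Q(m)>0$ for every polynomial $Q(x):=(x-Y_l)(x-Y_l-1)P(x)$ with $P\in\P_{n-3}\cup\P_{n-4}$. Such a $Q$ is nonnegative on $\bbn_0$, has degree at most $n-1$, and vanishes on a finite subset $Z\subset\bbn_0$ with $|Z|\le n-1$.

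The core construction is to produce, for a prescribed $k\in\bbn_0\setminus Z$, a probability measure $\sigma_k$ on $\bbn_0$ that realizes $m^{(n-1)}$ and puts positive mass at $k$. By \crem{misc}(b), I-realizability of $m^{(n-1)}$ means that $m^{(n-1)}$ lies in the interior of the set of realizable $(n-1)$-tuples in $\bbr^{n-1}$, while $v_k^{(n-1)}:=(k,k^2,\ldots,k^{n-1})$ is realizable via $\delta_k$. Hence the line segment from $v_k^{(n-1)}$ through $m^{(n-1)}$ extends into the interior past $m^{(n-1)}$: there is $s>1$ such that the point $w:=v_k^{(n-1)}+s(m^{(n-1)}-v_k^{(n-1)})$ is still realizable by some measure $\tau$, and then $\sigma_k:=(1-s^{-1})\delta_k+s^{-1}\tau$ realizes $m^{(n-1)}=(1-s^{-1})v_k^{(n-1)}+s^{-1}w$ with $\sigma_k(\{k\})\ge1-s^{-1}>0$. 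Since $\deg Q\le n-1$ and $Q\ge0$ on $\bbn_0$, it follows that $L_Q(m)=E_{\sigma_k}[Q(X)]\ge(1-s^{-1})Q(k)>0$, because $Q(k)>0$ for $k\notin Z$.

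The main obstacle is the convex-geometric step in the previous paragraph: passing from I-realizability to a realizing measure of $m^{(n-1)}$ that can be made to charge any prescribed point of $\bbn_0$. The essential input is the identification of I-realizable vectors with interior points of the realizable set, which is itself an application of \cthm{biglem}(a). Once this is in hand, any nonnegative polynomial on $\bbn_0$ of degree at most $n-1$ with finite zero set must have strictly positive $L$-value, which is precisely what the argument needs.
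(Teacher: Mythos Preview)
Your proof of (a) is correct and essentially the same as the paper's (the paper just records the identity $E_{\sigma'}[X^k]=c(m)E_\sigma[(X-Y_l)(X-Y_l-1)X^k]$ and leaves the rest implicit).

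For (b) your argument is correct but takes a genuinely different route from the paper's. The paper works with the \emph{interior-point} characterization of I-realizability (\crem{misc}(b)) as the \emph{target}: it observes that the map $(m_3,\ldots,m_{n-1})\mapsto (M_1,\ldots,M_{n-3})$ at fixed $m_1,m_2$ has triangular Jacobian with nonzero diagonal $c(m)$, applies the inverse function theorem, and combines this with (a) to conclude that a full neighborhood of $M^{(n-3)}(m)$ consists of realizable vectors. You instead verify the \emph{definition} of I-realizability directly, reducing via the displayed identity to $L_Q(m)>0$ for $Q=(x-Y_l)(x-Y_l-1)P$ with $P\in\P_{n-3}\cup\P_{n-4}$; since such $Q$ need not lie in $\P_{n-1}\cup\P_{n-2}$ (roots may collide with $Y_l$ or $Y_l+1$), you cannot simply quote \clem{nonzero}, and your convex-geometric construction of a realizing measure for $m^{(n-1)}$ charging any prescribed $k\notin Z$ neatly bypasses this. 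The paper's approach is shorter and more structural (open maps preserve interiors), while yours is more self-contained and yields the extra fact that $L_Q(m)>0$ for every polynomial of degree at most $n-1$ that is nonnegative on $\bbn_0$ and not identically zero there---a mild strengthening of \clem{nonzero}.
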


\begin{proof}(a) This follows  from
  $E_{\sigma'}[X^k]=c(m)E_\sigma[(X-Y_1)(X-Y_1-1)X^k]$.
 \par\smallskip\noindent
 (b) We use the characterization of I-realizability given in Remark
\ref{misc}(b).  Let $U$ be a neighborhood of $m^{(n-1)}$ such that if
$\bar m^{(n-1)}\in U$ then $\bar m^{(n-1)}$ is realizable.  Since the
matrix $\bigl(\partial M_p/\partial m_{q+2}\bigr)_{p,q=1}^{n-3}$ obtained
from \eqref{mtoM} is triangular, with nonzero diagonal elements $c(m)$, we
may apply the inverse function theorem to the map \eqref{mtoM}, at fixed
$m_1$ and $m_2$, to conclude that there is a neighborhood $V$ of
$M^{(n-3)}(m)$ such that if $\bar M^{(n-3)}\in V$ then
$\bar M^{(n-3)}=M^{(n-3)}(\bar m)$ for some $\bar m\in U$.  But then (a)
implies that $\bar M^{(n-3)}$ is also realizable.  \end{proof}

  Recall that $\mt_n$ is the minimal value for which
$\mt^{(n)}=(m_1,\ldots, m_{n-1},\mt_n)$ is realizable on $\mathbb{N}_0$
and that $\mu$ then denotes the unique probability measure realizing
$\mt^{(n)}$; note that then Lemma~\ref{genlem}(a) implies that
$M^{(n-2)}(\mt)$ is also realizable.  We define
$\mathcal{N}^m_n:=\supp\mu$; $\mathcal{N}^m_n$ contains at most $n$
points.

We give below a procedure to compute $\mathcal{N}^m_n$ by induction on
$n$.  This solves the realizability problem, for knowledge of
$\mathcal{N}^m_n$ determines realizability: one must simply choose a
polynomial $P\in\P_n$ whose zeros contain $N^m_n$ (see \crem{misc}(e));
then $m^{(n)}$ is I-realizable if and only if $L_P(m^{(n)})>0$, and is
B-realizable if and only if $L_P(m^{(n)})=0$.

\begin{theorem}\label{thinduct} If $l=j$, where $j$ has the property that
$\{Y_j,Y_{j+1}\}\subset\supp\mu$, then $M^{(n-2)}(\mt)$ is B-realizable
by a unique probability measure $\mu'$.  The support of $\mu'$ is
disjoint from $\{Y_j,Y_j+1\}$ and
$\supp\mu = (\supp\mu') \cup \{Y_j,Y_j+1\}$. \end{theorem}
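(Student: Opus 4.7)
The plan is to use the factorization of the minimizing polynomial $P_n^{(m)}$ furnished by \ccor{one} to construct $\mu'$ explicitly, then to read off B-realizability and the support decomposition from this factorization, and finally to obtain uniqueness of $\mu'$ by a Vandermonde argument analogous to \clem{BR}(a). Throughout we may assume $\supp\nu\not\subset\bbn_0$, since otherwise \crem{easy}(a) reduces the problem to the Stieltjes one.

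First I would write $P_n^{(m)}=P_\alpha$ and use the hypothesis that $\{Y_j,Y_j+1\}\subset\supp\mu$ to identify $(Y_j,Y_j+1)$ as one of the structural consecutive pairs appearing in the definition of $\A_n$. Factoring,
\begin{equation}
P_\alpha(x)=(x-Y_j)(x-Y_j-1)\,R(x),
\end{equation}
removing this specific pair leaves a polynomial whose roots satisfy the defining constraints of $\A_{n-2}$ (consecutive pairs if $n$ is even; zero together with consecutive pairs if $n$ is odd), so that $R\in\P_{n-2}$. Applying \clem{genlem}(a) to $\sigma=\mu$ then produces a probability measure $d\mu'(x):=c(\mt)(x-Y_j)(x-Y_j-1)\,d\mu(x)$ realizing $M^{(n-2)}(\mt)$ on $\bbn_0$. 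Its density vanishes precisely at $Y_j$ and $Y_j+1$ and is nonzero at every other nonnegative integer, which immediately gives $\supp\mu'=\supp\mu\setminus\{Y_j,Y_j+1\}$ and hence the disjoint decomposition $\supp\mu=\supp\mu'\cup\{Y_j,Y_j+1\}$ claimed in the theorem.

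B-realizability of $M^{(n-2)}(\mt)$ will then follow from the identity
\begin{equation}
L_R(M^{(n-2)}(\mt))=c(\mt)\,L_{(x-Y_j)(x-Y_j-1)R}(\mt)=c(\mt)\,L_{P_\alpha}(\mt)=0,
\end{equation}
where the last equality uses that $\mt^{(n)}$ is B-realizable with minimizing polynomial $P_\alpha$; since $R\in\P_{n-2}$, this rules out I-realizability of $M^{(n-2)}(\mt)$, so combined with realizability via $\mu'$ the vector $M^{(n-2)}(\mt)$ is B-realizable. Uniqueness of $\mu'$ then follows exactly as in \clem{BR}(a): any realizing measure $\rho$ satisfies $E_\rho[R]=0$ and, because $R\ge0$ on $\bbn_0$, $\supp\rho$ must lie in the $n-2$ distinct zeros of $R$; the $n-2$ mass coefficients are pinned down by Vandermonde invertibility from $M_0(\mt),\ldots,M_{n-3}(\mt)$, using the I-realizability of $M^{(n-3)}(\mt)$ supplied by \clem{genlem}(b).

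The main obstacle is the combinatorial verification that $R\in\P_{n-2}$, which is precisely where the special choice $l=j$ from \ccor{one} enters: for other values of $l$ the pair $(Y_l,Y_l+1)$ need not align with a structural pair of $\A_n$, and the lift to a lower-degree moment problem would break down. Once $R\in\P_{n-2}$ is in place, the remainder is a direct substitution into~\eqref{mtoM} together with the Vandermonde uniqueness argument already deployed in \csect{finite}.
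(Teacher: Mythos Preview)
Your proposal is correct and follows essentially the same route as the paper's proof: factor the minimizing polynomial $P_n^{(m)}$ as $(x-Y_j)(x-Y_j-1)R$ with $R\in\P_{n-2}$ via \ccor{one}, use the identity $L_R(M(\mt))=c(m)L_{P_n^{(m)}}(\mt)=0$ together with realizability of $M^{(n-2)}(\mt)$ (from \clem{genlem}(a) applied to $\mu$) to force B-realizability, and invoke \clem{BR} for uniqueness; the only difference is cosmetic ordering---you build $\mu'$ explicitly first and then read off B-realizability, whereas the paper argues B-realizability first and then identifies $\mu'$. One small point worth tightening: your opening sentence says the hypothesis $\{Y_j,Y_j+1\}\subset\supp\mu$ alone identifies $(Y_j,Y_j+1)$ as a structural pair of $\alpha\in\A_n$, but in fact membership in $\supp\mu$ only places these two integers among the roots of $P_\alpha$, and they could in principle straddle two adjacent structural pairs; it is the full conclusion of \ccor{one} (namely $\alpha_{2j-1}=Y_j$, $\alpha_{2j}=Y_j+1$ in the even case) that forces the alignment and hence $R\in\P_{n-2}$---exactly as you acknowledge in your final paragraph and as the paper does by citing \ccor{one} directly for the factorization.
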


\begin{proof} Note that a $j$ with $\{Y_j,Y_{j+1}\}\subset\supp\mu$
exists by \ccor{one}.  By \clem{genlem}(b) and Theorem~\ref{biglem} there
exists a minimizing polynomial $P_{n-2}^{(M)}$ for $M^{(n-2)}(m)$.  Let
$P_n^{(m)}$ be a minimizing polynomial for $m^{(n)}$; minimality of
$\mt_n$ implies that $L_{P_n^{(m)}}(\mt)=0$.  By Corollary~\ref{one},
$P_n^{(m)} = (x-Y_j)(x-Y_j-1)Q_{n-2}^{(m)}$ for some
$Q_{n-2}^{(m)} \in \mathcal{P}_{n-2}$.  But then
$L_{Q_{n-2}^{(m)}}(M(\mt)) =c(m)L_{P_n^{(m)}}(\mt)=0$, and as
$M^{(n-2)}(\mt)$ is realizable it must be B-realizable and
$Q_{n-2}^{(m)}$ must be a minimizing polynomial for it. By
Lemma~\ref{BR}, $M^{(n-2)}(\mt)$ is realized by a unique probability
measure $\mu'$ which must be $d\mu'(x)=c(m)(x-Y_j)(x-Y_j-1)d\mu(x)$.  The
support properties of $\mu'$ follow. \end{proof}

We can now describe the inductive procedure for computing
$\mathcal{N}^m_n$.  The key difficulty is that we do not know {\it a
  priori} a ``correct'' index $j$ arising from \ccor{one}, and must carry
our the recursion for each possible index (see step 4 below).

\begin{enumerate}

\item The base cases are $n=2$ and $n=3$.  For these it follows from
\eqref{lown} and the accompanying discussion that $\N_2^m=\{m_1\}$ if
$m_1\in\bbn_0$ and otherwise
$\N_2^m=\{\lfloor m_1\rfloor,\lfloor m_1\rfloor+1\}$. Similarly
$\N_3^m=\{0,m_2/m_1\}$ if $m_2/m_1\in\bbn_0$, and otherwise
$\N_3^m=\{0,\lfloor m_2/m_1\rfloor,\lfloor m_2/m_1\rfloor+1\}$.

\end{enumerate}

\noindent The induction for $n>2$ proceeds as follows:

\begin{enumerate}\setcounter{enumi}{1}

\item Determine $\supp\nu$, that is,
$\{y_1,\ldots,y_k\}$ if $n=2k$ or $\{0,y_1,\ldots,y_k\}$ if $n=2k+1$.
The procedure is given in \crem{support}; in summary:

  $\bullet$ If $n=2k$ then $y_1,\ldots,y_k$ are the roots of
$x^k-\sum_{i=0}^{k-1}\varphi_ix^i=0$, where 
$(\varphi_0,\ldots,\varphi_{k-1})=(m_k,\ldots,m_{n-1})A(k-1)^{-1}$.

  $\bullet$ If $n=2k+1$ then $0,y_1,\ldots,y_k$ are the roots of
$x^{k+1}-\sum_{i=1}^{k}\varphi_ix^i=0$, where 
$(\varphi_1,\ldots,\varphi_{k})=(m_{k+1},\ldots,m_{n-1})B(k-1)^{-1}$.

\item If $\supp\nu\subset\bbn_0$ then $\mathcal{N}^{m}_{n} = \supp \nu$.

\item If $\supp\nu\not\subset\bbn_0$ then for each $l$, $l=1, \ldots ,k$,
define $M_l(m)$ by \eqref{mtoM}.  By \clem{genlem}(b), $M_l^{(n-3)}(m)$ is
I-realizable.  Find recursively the corresponding support
$\N^{M(m)}_{l,n-2}$.  If $\N^{M(m)}_{l,n-2}\cap\{Y_l,Y_l+1\}\ne\emptyset$
then reject this value of $l$. 

\item Choose for each $l$ a polynomial $Q_l \in \mathcal{P}_n$
whose set of roots contains $\mathcal{N}^{M(m)}_{l,n-2} \cup \{ Y_l, Y_l+1 \}$.

\item Find $i$ such that $L_{Q_i}(m)$ is minimal among all $L_{Q_l}(m)$,
$l=1,\ldots,k$.  Then $Q_i$ is a minimizing polynomial for $m^{(n)}$.
There is a unique realizing measure for $m^{(n-1)}$ with support in the
zero set of $Q_i$ (it is also the unique realizing measure for $\mt^{(n)}$)
which may be calculated by the procedure outlined in \clem{BR}(a). 
$\N_n^m$ is the support of this measure.

\end{enumerate}

\begin{theorem}\label{IndProc} The set $\N_n^m$ produced in step~1, 3, or
6 of the above algorithm is  the support of $\mu$.\end{theorem}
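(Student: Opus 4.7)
The plan is to induct on $n$. The base cases $n=2,3$ follow directly from \eqref{lown} and \cthm{Case23}: in each case the set returned by step~1 coincides with $\supp\mu$. For the inductive step $n\ge4$, I would split according to whether $\supp\nu\subset\bbn_0$ or not. When $\supp\nu\subset\bbn_0$, \crem{easy}(a) gives $\mt_n=\mh_n$ and the unique B-realizing measure $\mu$ for $\mt^{(n)}$ coincides with $\nu$, so step~3 correctly returns $\N_n^m=\supp\nu$.

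When $\supp\nu\not\subset\bbn_0$, the heart of the argument is \ccor{one}, which guarantees at least one index $j\in\{1,\ldots,k\}$ with $\{Y_j,Y_j+1\}\subset\supp\mu$; I call any such $j$ correct. For a correct $j$, \cthm{thinduct} identifies $\supp\mu\setminus\{Y_j,Y_j+1\}$ with $\supp\mu'$, where $\mu'$ is the unique probability measure B-realizing $M_j^{(n-2)}(\mt)$, and ensures $\supp\mu'\cap\{Y_j,Y_j+1\}=\emptyset$. Because $M_j^{(n-2)}(\mt)$ is B-realizable, $M_{j,n-2}(\mt)$ is the minimal admissible value of the last moment of $M_j(m)^{(n-2)}$, so $\mu'$ is precisely the measure attached to $M_j(m)$ in the definition of $\N^{M(m)}_{j,n-2}$.

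The recursion is closed by invoking \clem{genlem}(b), which certifies that $M_j^{(n-3)}(m)$ is I-realizable, so the inductive hypothesis applies and the recursive call in step~4 for $l=j$ returns exactly $\supp\mu'$. In particular $\N^{M(m)}_{j,n-2}\cap\{Y_j,Y_j+1\}=\emptyset$, so the correct index $j$ survives step~4, and the polynomial $Q_j$ built in step~5 lies in $\P_n$ with zero set containing $\supp\mu'\cup\{Y_j,Y_j+1\}=\supp\mu$. By \crem{misc}(e), $Q_j$ is then a minimizing polynomial for $m^{(n)}$. Since any non-rejected $Q_l\in\P_n$ satisfies $L_{Q_l}(m)\ge L_{P_n^{(m)}}(m)=L_{Q_j}(m)$, the minimizer $Q_i$ selected in step~6 is itself a minimizing polynomial; its zero set must then contain $\supp\mu$ (again by \crem{misc}(e)), and applying \clem{BR}(a) to $Q_i$ recovers $\supp\mu$ exactly as $\N_n^m$.

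The main obstacle I anticipate is conceptual rather than computational: the algorithm has no way to know a correct index in advance, and may produce spurious candidates for incorrect values of $l$. The resolution is that this does no harm — an incorrect $l$ either gets discarded in step~4 (when its recursive output meets $\{Y_l,Y_l+1\}$) or yields a $Q_l\in\P_n$ that cannot undercut the minimum in step~6, so the selection still lands on a bona fide minimizing polynomial. The most delicate technical point to verify carefully is the matching of tildes: the recursive call produces the support of the measure realizing $\widetilde{M_j(m)}^{(n-2)}$, and one must confirm that $\widetilde{M_j(m)}_{n-2}=M_{j,n-2}(\mt)$, which follows from the fact that $M_j^{(n-2)}(\mt)$ is already B-realizable and hence already sits on the minimality surface.
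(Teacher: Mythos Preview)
Your proposal is correct and follows essentially the same route as the paper's proof: handle the base cases and the $\supp\nu\subset\bbn_0$ case directly, then for the remaining case use \ccor{one} and \cthm{thinduct} to see that $Q_j$ is a minimizing polynomial for a correct $j$, so the $Q_i$ chosen in step~6 is also minimizing, whence \crem{misc}(e) and \clem{BR}(a) identify $\N_n^m$ with $\supp\mu$. You spell out the inductive structure and the ``matching of tildes'' (that $M_j^{(n-2)}(\mt)$, being B-realizable over the I-realizable $M_j^{(n-3)}(m)$, already sits at the minimal last moment) more explicitly than the paper does, but the argument is the same.
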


\begin{proof} In the case in which $\N_n^m$ arises at step~1 or step 3 this
follows from \csect{simple} or \crem{easy}(a), respectively.  Suppose then
that $\N_n^m$ arises at step~6. \cthm{thinduct} implies that if $l=j$, with
$j$ as in Corollary~\ref{one}, then $Q_l$ is a minimizing polynomial for
$m^{(n)}$; note that the procedure will not terminate at step~4 in this
case.  Thus if $i$ is as in step~6, $L_{Q_i}(m)\le L_{Q_j}(m)$
implies that $Q_i$ is also a minimizing polynomial. The characterization of
$\N_n^m$ then follows from \crem{misc}(e).\end{proof}

Because there are $k=\lfloor n/2 \rfloor$ choices for $l$ at step~4, the
algorithm can require $\lfloor n/2\rfloor!$ stages.  For moderate size of
$n$ this should not be a real restriction.  The time might be shortened
by the fact that the procedure can terminate at step~4, but we have no
estimate for how often this may occur.

\subsection{Explicit formulas for $n=4$ and $5$}\label{subsecn4}

 We now specialize to the cases $n=4$ and $n=5$.  Of course, the recursive
procedure of \csect{subsecind} could be used to reduce these to the $n=2$
and $n=3$ cases of \csect{simple}, but there is a simpler answer: we can
obtain explicit formulas for $\supp\nu$ and hence for $P^{(m)}_n$. In
stating the relevant theorems we assume, by \crem{easy}(c), that
$\supp\nu\not\subset\bbn_0$.  When $n=4$ we define
 \begin{eqnarray}\label{tTdefn}
t_1&=&\frac{m_3-(2Y_2+1)m_2+Y_2(Y_2+1)m_1}
   {m_2-(2Y_2+1)m_1+Y_2(Y_2+1)m_0}\;, \qquad T_1=\lfloor t_1\rfloor;\\
\label{tTdefm}
 t_2&=&\frac{m_3-(2Y_1+1)m_2+Y_1(Y_1+1)m_1}
  {m_2-(2Y_1+1)m_1+Y_1(Y_1+1)m_0}\;,  \qquad T_2=\lfloor t_2\rfloor.
 \end{eqnarray}

\begin{theorem}\label{R1R2n} Suppose that $n=4$, that $\nu$ and $\mu$
are as above and that $\supp\nu\not\subset\bbn_0$.  Then
$\supp\mu\subset\{T_1,T_1+1,T_2,T_2+1\}$ with $|\supp\mu|\ge3$; moreover,
$T_2\ge T_1+1$, so that if $T_2>T_1+1$ one may take
 \be \label{eqpoly4}
P_4^{(m)}(x)=(x-T_1)(x-T_1-1)(x-T_2)(x-T_2-1),
 \ee
 and if $T_2=T_1+1$ one may take for example
 \be\label{eqpoly4'}
P_4^{(m)}(x)=(x-T_1)(x-T_2)(x-T_2-1)(x-T_2-2).
 \ee
 \end{theorem}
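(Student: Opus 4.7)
The plan is as follows. I will use the interleaving from \cprop{interleave} to get $|\supp\mu|\ge 3$, then invoke \ccor{one} to force at least one of two structural cases at $n=4$: Case~A (when $j=1$: $\alpha_1=Y_1$, $\alpha_2=Y_1+1$ and $\{Y_1,Y_1+1\}\subset\supp\mu$) or Case~B (when $j=2$: $\alpha_3=Y_2$, $\alpha_4=Y_2+1$ and $\{Y_2,Y_2+1\}\subset\supp\mu$).

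In Case~A, \cthm{thinduct} applied with $l=1$ produces a unique probability measure $\mu'$ which B-realizes $M^{(2)}(\mt)$ and satisfies $\supp\mu=\{Y_1,Y_1+1\}\cup\supp\mu'$ (disjoint union). The first moment of $\mu'$ is $t_2$, and the $n=2$ analysis in \cthm{Case23} and the accompanying discussion imply that the support of any B-realizing measure of a $2$-moment vector lies in a pair of consecutive integers determined by its first moment. Hence $\supp\mu'\subset\{T_2,T_2+1\}$, giving $\supp\mu\subset\{Y_1,Y_1+1,T_2,T_2+1\}$ with $T_2\ge Y_1+2$. Case~B is symmetric and yields $\supp\mu\subset\{T_1,T_1+1,Y_2,Y_2+1\}$ with $T_1\le Y_1$.

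The principal step, and the main obstacle, is to show that in Case~A one has $T_1=Y_1$ (and symmetrically $T_2=Y_2$ in Case~B), since then $\{Y_1,Y_1+1\}=\{T_1,T_1+1\}$ (resp.\ $\{Y_2,Y_2+1\}=\{T_2,T_2+1\}$) and the containment upgrades to the desired $\supp\mu\subset\{T_1,T_1+1,T_2,T_2+1\}$. My strategy is to sandwich $t_1\in[Y_1,Y_1+1)$ by computing it in two ways. For the upper bound, since $\mu$ and $\nu$ share moments through degree~$3$, I may rewrite $t_1=E_\nu[X(X-Y_2)(X-Y_2-1)]/E_\nu[(X-Y_2)(X-Y_2-1)]$; setting $h(y):=(y-Y_2)(y-Y_2-1)$ and $p:=\nu(y_1)$, a direct manipulation gives $t_1-y_1=(y_2-y_1)(1-p)h(y_2)/D$, where $D>0$ by I-realizability of $m^{(n-1)}$ and $h(y_2)\le 0$ because $y_2\in[Y_2,Y_2+1]$, so $t_1\le y_1<Y_1+1$. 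For the lower bound, I use the $\mu$-representation $t_1-Y_1=E_\mu[(X-Y_1)(X-Y_2)(X-Y_2-1)]/D$; the integrand is nonnegative on $\supp\mu$, because $(X-Y_2)(X-Y_2-1)\ge 0$ on all of $\bbn_0$ and $X\ge \alpha_1=Y_1$ throughout $\supp\mu$ by \ccor{one} in Case~A. Combining, $Y_1\le t_1<Y_1+1$ and thus $T_1=Y_1$.

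The bound $T_2\ge T_1+1$ is then immediate from $T_1\le Y_1<Y_2\le T_2$ and integrality. For the polynomial formulas, when $T_2>T_1+1$ the candidate $(x-T_1)(x-T_1-1)(x-T_2)(x-T_2-1)$ has four distinct integer roots and is nonnegative on $\bbn_0$ as a product of two factors of the form $(x-k)(x-k-1)$, hence lies in $\P_4$; since its zero set contains $\supp\mu$, \crem{misc}(e) identifies it as a minimizing polynomial for $m^{(4)}$. When $T_2=T_1+1$ this candidate has a repeated root, so I instead take $(x-T_1)(x-T_2)(x-T_2-1)(x-T_2-2)$, whose four distinct zeros $\{T_1,T_1+1,T_1+2,T_1+3\}$ still contain $\supp\mu\subset\{T_1,T_1+1,T_2,T_2+1\}$, and by the same remark this is a valid choice of $P_4^{(m)}$.
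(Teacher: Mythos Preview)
Your proof is correct and follows essentially the same approach as the paper's. The paper also splits into the two cases coming from \ccor{one}, proves the sandwich $Y_1\le t_1\le y_1<Y_1+1$ in Case~A via exactly the $\nu$-side and $\mu$-side estimates you use (the paper phrases them as $L_{G_{Y_1}}(m)\ge0$ and $L_{G_{y_1}}(m)\le0$ with $G_\tau(x)=(x-\tau)(x-Y_2)(x-Y_2-1)$), and then declares Case~B to be ``handled similarly.'' The only stylistic difference is that you route the identification of the remaining support points through \cthm{thinduct} and the $n=2$ base case, whereas the paper does this directly with the auxiliary polynomial $F_\tau(x)=(x-Y_1)(x-Y_1-1)(x-\tau)$; these are the same computation.

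One small remark on the symmetry: the literal mirror of your Case~A sandwich yields only $Y_2\le t_2\le Y_2+1$ in Case~B (the $\mu$-side bound $X\le\alpha_4=Y_2+1$ is not strict), so you do not immediately get $T_2=Y_2$. However, equality $t_2=Y_2+1$ forces $(X-Y_2-1)(X-Y_1)(X-Y_1-1)=0$ on $\supp\mu$, hence $\supp\mu\subset\{Y_1,Y_1+1,Y_2+1\}$; since $Y_2\in\supp\mu$ in Case~B this requires $Y_2\in\{Y_1,Y_1+1\}$, and combined with the interleaving this puts $\{Y_1,Y_1+1\}\subset\supp\mu$, i.e.\ Case~A also holds and your Case~A argument finishes the job. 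The paper's ``handled similarly'' has the same implicit wrinkle, so your treatment matches it.
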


\begin{proof} From \ccor{one} it follows that either
$\{Y_1,Y_1+1\}\subset\supp\mu$ or $\{Y_2,Y_2+1\}\subset\supp\mu$. Consider
the first case, in which $\supp\mu$ is either $\{Y_1,Y_1+1,k\}$ or
$\{Y_1,Y_1+1,k,k+1\}$, with $k>Y_1+1$ an integer.  Let
$F_\tau(x)=(x-Y_1)(x-Y_1-1)(x-\tau)$, so that the linear equation
$L_{F_\tau}(m)=0$ has root $\tau=t_2$.  Now if $\supp\mu=\{Y_1,Y_1+1,k\}$
then $L_{F_k}(m)=0$ so that $t_2=T_2=k$, while if
$\supp\mu=\{Y_1,Y_1+1,k,k+1\}$ then $L_{F_k}(m)>0$ and $L_{F_{k+1}}(m)<0$,
so that $k<t_2<k+1$, $k=T_2$ and $\supp\mu=\{Y_1,Y_1+1,T_2,T_2+1\}$.
 Note that \cprop{interleave} implies that
$T_2+1>y_2>Y_1+1$, so that $T_2\ge Y_2$ and $T_2> Y_1+1$.

To complete the proof in the case under consideration we need only show
that $T_1=Y_1$, i.e., that $Y_1\le t_1<Y_1+1$.  Let
$G_\tau(x)=(x-\tau)(x-Y_2)(x-Y_2-1)$, so that the equation
$L_{G_\tau}(m)=0$ has root $\tau=t_1$.  Now $G_{Y_1}(x)$ is nonnegative for
$x\in\supp\mu$, so that $L_{G_{Y_1}}(m)\ge0$.  On the other hand,
$G_{y_1}(y_1)=0$ and $G_{y_1}(y_2)\le0$, since $y_1<Y_2\le y_2<Y_2+1$, so
that since $\nu$, which realizes $m^{(3)}$, has support $\{y_1,y_2\}$,
$L_{G_{y_1}}(m)=E_\nu[G_{y_1}(X)]\le0$.  Since $L_{G_{Y_1}}(m)\ge0$ and
$L_{G_{y_1}}(m)\le0$, $Y_1\le t_1\le y_1<Y_1+1$.  This completes the proof
when $\{Y_1,Y_1+1\}\subset\supp\mu$.  

The case $\{Y_2,Y_2+1\}\subset\supp\mu$ is handled similarly.  Now
$\supp\mu$ is either $\{k,Y_2,Y_2+1\}$, with $k<Y_2$, or
$\{k,k+1,Y_2,Y_2+1\}$, with $k+1<Y_2$, and in either case $k=T_1$.  Thus
$T_2\ge T_1+1$ in all cases.  \end{proof}

\begin{remark}(a) We summarize here the procedure to determine
realizability for the case $n=4$.  If $m^{(3)}$ is I-realizable (see
Theorem \ref{Case23} for the conditions to be checked) then:

\begin{itemize}

\item Find the solutions $y_1,y_2$ of the equation \eqref{gCase4}.  If
$y_1$ and $y_2$ are integers then $m$ is realizable if and only if
$\det A(2)\ge0$ and I-realizable if $\det A(2)>0$.

\item Otherwise, define $Y_1 := \lfloor y_1 \rfloor$ and
$Y_2:= \lfloor y_2 \rfloor$, and compute $T_1$ and $T_2$ from
\eqref{tTdefn} and \eqref{tTdefm}.

\item If $T_2>T_1+1$ then define $P_4^{(m)}$ by \eqref{eqpoly4}; otherwise,
i.e. if $T_2=T_1+1$, define $P_4^{(m)}$ by \eqref{eqpoly4'}.  Then
$m^{(4)}$ is realizable if and only if $L_{P_4^{(m)}}(m)\geq 0$; it is
I-realizable if the inequality is strict and B-realizable otherwise.

\end{itemize} 

 \smallskip\noindent
 (b) We may also relate \cthm{R1R2n} to the general inductive procedure
introduced in Section~\ref{subsecind}.  The index $j$ of \ccor{one} must
be either 1 or 2.  If $j=1$, so that $\{Y_1,Y_1+1\}\subset\supp\mu$, then
step~4 of the procedure gives $M_{1,1}(m)=t_2$, so that a recursive
application of step~1 gives $\N_{2,1}^{M(m)}=\{T_2\}$ if $t_2\in\bbn_0$
and $\N_{2,1}^{M(m)}=\{T_2,T_2+1\}$ otherwise.  By what appears to be a
lucky accident (which does not seem to generalize to $n\ge6$), however,
\eqref{tTdefn} gives a value of $T_1$ which coincides with $Y_1$. The
analysis when $j=2$ is similar, so that \eqref{eqpoly4} or
\eqref{eqpoly4'} holds whatever the value of $j$.  \end{remark}

The case $n=5$ is similar to that of $n=4$.  Here we we define
 \begin{eqnarray}\label{tTtdefn}
\tt_1&=&\frac{m_4-(2Y_2+1)m_3+Y_2(Y_2+1)m_2}
   {m_3-(2Y_2+1)m_2+Y_2(Y_2+1)m_1}\;, \qquad \Tt_1=\lfloor \tt_1\rfloor;\\
\label{tTtdefm}
 \tt_2&=&\frac{m_4-(2Y_1+1)m_3+Y_1(Y_1+1)m_2}
  {m_3-(2Y_1+1)m_2+Y_1(Y_1+1)m_1}\;,  \qquad \Tt_2=\lfloor \tt_2\rfloor.
 \end{eqnarray}

\begin{theorem}\label{R1R25} Suppose that $n=5$, that $\nu$ and $\mu$ are
as above and that $\supp\nu\not\subset\bbn_0$.  Then
$\supp\mu\subset\{0,\Tt_1,\Tt_1+1,\Tt_2,\Tt_2+1\}$ with $0\in\supp\mu$ and
$|\supp\mu|\ge4$; moreover, $\Tt_2\ge \Tt_1+1$, so that if $\Tt_2>\Tt_1+1$
one may take
 \be
P_5^{(m)}(x)=x(x-\Tt_1)(x-\Tt_1-1)(x-\Tt_2)(x-\Tt_2-1),
 \ee
 and if $\Tt_2=\Tt_1+1$ one may take for example
 \be
P_5^{(m)}(x)=x(x-\Tt_1)(x-\Tt_2)(x-\Tt_2-1)(x-\Tt_2-2).
 \ee
 \end{theorem}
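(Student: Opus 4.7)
My plan is to mirror the proof of \cthm{R1R2n} for $n=4$, inserting an extra factor of $x$ into every test polynomial to account for the fact that $n=5$ is odd and that $0\in\supp\mu$ by \crem{easy}(c).

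First I would apply \ccor{one}(b) with $n=2k+1=5$ (so $k=2$) to conclude that either $\{Y_1,Y_1+1\}\subset\supp\mu$ (appearing as $\alpha_2,\alpha_3$) or $\{Y_2,Y_2+1\}\subset\supp\mu$ (appearing as $\alpha_4,\alpha_5$). The interleaving property of \cprop{interleave}, applied to $n=5$, supplies three points $\eta_0<y_1<\eta_1<y_2<\eta_2$ of $\supp\mu$ with $\eta_0>0$; combined with $0\in\supp\mu$ this yields $|\supp\mu|\geq 4$. I treat the case $\{Y_1,Y_1+1\}\subset\supp\mu$; the second is symmetric. Here $\supp\mu$ lies in the roots of some $P_\alpha\in\P_5$ with $\alpha_1=0$, $\alpha_2=Y_1$, $\alpha_3=Y_1+1$, and $(\alpha_4,\alpha_5)=(k,k+1)$ for an integer $k>Y_1+1$.

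To pin down $\Tt_2$, introduce $\Ft_\tau(x)=x(x-Y_1)(x-Y_1-1)(x-\tau)$, a polynomial of degree $4\leq n-1$, so that $L_{\Ft_\tau}(m)=E_\mu[\Ft_\tau(X)]$. Direct expansion gives $L_{\Ft_\tau}(m)=A-\tau B$, where $B=L_{x(x-Y_1)(x-Y_1-1)}(m)>0$ by the I-realizability of $m^{(3)}$ (since $x(x-Y_1)(x-Y_1-1)\in\P_3$), so $L_{\Ft_\tau}(m)=0$ iff $\tau=\tt_2$. If $\supp\mu=\{0,Y_1,Y_1+1,k\}$, then $\Ft_k$ vanishes on $\supp\mu$ and $\tt_2=k$. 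If $\supp\mu=\{0,Y_1,Y_1+1,k,k+1\}$, then $\Ft_k\geq 0$ on $\supp\mu$ with strict positivity at $k+1$, while $\Ft_{k+1}\leq 0$ with strict negativity at $k$, so $k<\tt_2<k+1$. Either way $\Tt_2=k$, hence $\supp\mu\subset\{0,Y_1,Y_1+1,\Tt_2,\Tt_2+1\}$.

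To show $\Tt_1=Y_1$, set $\Gt_\tau(x)=x(x-\tau)(x-Y_2)(x-Y_2-1)$; then $L_{\Gt_\tau}(m)=C-\tau D$ with $D=L_{x(x-Y_2)(x-Y_2-1)}(m)>0$ by the same I-realizability argument, so $\tau\mapsto L_{\Gt_\tau}(m)$ is strictly decreasing in $\tau$ and vanishes at $\tt_1$. I would then sandwich $\tt_1$ using two sign inequalities: first, $\Gt_{Y_1}\geq 0$ on $\supp\mu$, giving $L_{\Gt_{Y_1}}(m)\geq 0$ and $\tt_1\geq Y_1$; second, $\Gt_{y_1}\leq 0$ on $\supp\nu=\{0,y_1,y_2\}$ (it vanishes at $0$ and $y_1$ and is nonpositive at $y_2$ since $Y_2\leq y_2<Y_2+1$), giving $L_{\Gt_{y_1}}(m)=E_\nu[\Gt_{y_1}(X)]\leq 0$ and $\tt_1\leq y_1<Y_1+1$. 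Hence $\Tt_1=Y_1$. The symmetric case produces $\Tt_2=Y_2$ and $\Tt_1=k<Y_2$, so $\Tt_2\geq\Tt_1+1$ in all cases, and the two explicit forms of $P_5^{(m)}$ then follow from \crem{misc}(e) since each of the stated polynomials lies in $\P_5$ with zero set containing $\supp\mu$. The main delicate point will be verifying $\Gt_{Y_1}\geq 0$ at each point of $\supp\mu$: this requires a short case analysis distinguishing $Y_1<Y_2$ (where \cprop{interleave} forces $\eta_2\notin\{0,Y_1,Y_1+1\}$ and hence $\Tt_2\geq Y_2$) from the degenerate regime $Y_1=Y_2$ (where the factor $(x-Y_2)(x-Y_2-1)$ already vanishes at $Y_1$ and $Y_1+1$), but each sub-check is routine.
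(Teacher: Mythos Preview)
Your proposal is correct and follows essentially the same approach as the paper, which simply states that the proof is ``completely parallel to that of \cthm{R1R2n}, with the replacement of the polynomials $F_\tau(x)$ and $G_\tau(x)$ by $\Ft_\tau(x)=xF_\tau(x)$ and $\Gt_\tau(x)=xG_\tau(x)$.'' Your write-up in fact supplies more detail than the paper does, correctly invoking \crem{easy}(c) for $0\in\supp\mu$, \cprop{interleave} for the lower bound on $|\supp\mu|$, and identifying the one place (nonnegativity of $\Gt_{Y_1}$ on $\supp\mu$, requiring $\Tt_2\ge Y_2$) where a small extra check is needed.
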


\begin{proof}The proof is completely parallel to that of \cthm{R1R2n}, with
the replacement of the polynomials $F_\tau(x)$ and $G_\tau(x)$ by
$\Ft_\tau(x)=xF_\tau(x)$ and $\Gt_\tau(x)=xG_\tau(x)$, respectively.
\end{proof}

\section{Sufficient condition for realizability on $\bbn_0$}\label{sufficient}

  In this section we obtain a sufficient condition for realizability, which
will be I-realizability, of a moment vector $m^{(n)}$ on $\bbn_0$. To do so
we introduce a new class of polynomials $\W_n$: when $n=2k$, $\W_n$
consists of all polynomials of the form
$W_\beta(x)=V_\beta(x)V_\beta(x-1)$, where
$V_\beta(x)=(x-\beta_1)\cdots(x-\beta_k)$ with $\beta_1,\ldots,\beta_k$
real numbers, and when $n=2k+1$, $\W_n$ consists of all polynomials of the
form $xW_\beta(x)$ with $W_\beta\in\W_{n-1}$.  Note that $\W_n\supset\P_n$,
so that by \cthm{necsuff}, $m^{(n)}$ is realizable if $L_W(m)>0$ for all
$W\in\W_j$, $j=1,\ldots,n$.  This is our sufficient condition; note that
the strict positivity of all $L_W(m)$ and hence of all $L_P(m)$,
$P\in\bigcup_{j=1}^n\P_j$, implies that this is I-realizability.

To obtain this condition in a useful form we first consider $j=2k$ and note
that if $V_\beta(x)=\sum_{i=0}^kc_ix^i$ then
$V_\beta(x-1)=\sum_{i=0}^kb_ix^i$ where $c=(c_0,\ldots,c_k)^T$ and
$b=(b_0,\ldots,b_k)^T$ are related by $b=H(k)c$, with $H(k)$ the
$(k+1)\times(k+1)$ matrix defined by
 \be
H(k)_{il}= \begin{cases}0,&\hbox{if $i>l$},\\
    (-1)^{l-i}\ds\binom{l}{i},&\hbox{if $i\le l$}.\end{cases}
 \ee
If $m^{(n)}$ is realizable and $\mu$ is a realizing measure then 
 \be
   L_{W_\beta}(m)=E_\mu[W_\beta(X)]
   =c^TA(k)H(k)c=\frac12c^T\bigl(H(k)^TA(k)+A(k)H(k)\bigr)c.
 \ee
 Thus if the matrix $D_j=D_{2k}:=\bigl(H(k)^TA(k)+A(k)H(k)\bigr)/2$ is
positive definite then $L_W(m)>0$ for all $W\in\W_j$.  For $j=2k+1$ one
argues similarly that a sufficient condition for $L_W(m)>0$, $W\in\W_j$, is
that $D_j$ be positive definite, where
$D_j=D_{2k+1}:=\bigl(H(k)^TB(k)+B(k)H(k)\bigr)/2$.  We have proved:

\begin{theorem}\label{suffthm} $m^{(n)}$ is realizable on $\bbn_0$, and
in fact I-realizable, if all the matrices $D_j$, $j=1,\ldots,n$, are
positive definite.\end{theorem}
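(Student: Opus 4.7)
The plan is to verify that positive-definiteness of each $D_j$ forces $L_P(m)>0$ for every $P\in\P_j$, whence realizability follows from \cthm{necsuff} and the I-realizability upgrade follows from \cdef{IB}. The work splits into two pieces: (i) establish the inclusion $\P_j\subset\W_j$, and (ii) rewrite $L_{W_\beta}(m)$ as a quadratic form in the coefficient vector of $V_\beta$.

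For step (i), fix $P_\alpha\in\P_j$. When $j=2k$, the defining condition of $\A_n$ groups the roots into consecutive pairs $(\alpha_{2i-1},\alpha_{2i-1}+1)$; taking $V_\beta(x)=\prod_{i=1}^k(x-\alpha_{2i-1})$, one sees immediately that $V_\beta(x-1)=\prod_{i=1}^k(x-\alpha_{2i})$, so $P_\alpha(x)=V_\beta(x)V_\beta(x-1)=W_\beta(x)\in\W_j$. When $j=2k+1$, the required root $\alpha_1=0$ lets one factor $P_\alpha(x)=xP_{\alpha'}(x)$ and apply the even-case construction to $\alpha'=(\alpha_2,\dots,\alpha_n)$, giving $P_\alpha=xW_\beta\in\W_j$.

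For step (ii), I will repeat the algebraic identity already sketched preceding the theorem. Writing $V_\beta(x)=\sum_{i=0}^k c_ix^i$ (so that $c_k=1$, since $V_\beta$ is monic, and in particular $c\ne0$), and $V_\beta(x-1)=\sum_l b_l x^l$ with $b=H(k)c$, the definition $L_{W_\beta}(m)=\sum_{i,l}c_ib_lm_{i+l}$ becomes $c^TA(k)H(k)c$ when $j=2k$ and $c^TB(k)H(k)c$ when $j=2k+1$ (using $L_{xW_\beta}(m)=\sum_{i,l}c_ib_lm_{i+l+1}$). Since each of these is a scalar, it equals its own transpose, and the symmetry of $A(k)$ and $B(k)$ lets me rewrite the quadratic form as $c^TD_jc$. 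Positive-definiteness of $D_j$ then gives $L_{W_\beta}(m)>0$ for every $W_\beta\in\W_j$, and the inclusion from step~(i) yields $L_P(m)>0$ for every $P\in\P_j$.

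To conclude, applying this for $j=n$ gives \eqref{cond2} with strict inequality, and applying it for $j=n-1$ makes the conditional implication in \eqref{cond5} vacuous (its hypothesis $L_P(m)=0$ is never satisfied). By \cthm{necsuff}, $m^{(n)}$ is realizable; by \cdef{IB}, it is I-realizable, since $L_P(m)>0$ on all of $\P_n\cup\P_{n-1}$. There is no real obstacle here: the only points requiring care are the monic normalization (ensuring $c\ne0$ so positive-definiteness of $D_j$ is effective) and the bookkeeping that switches $A(k)$ for $B(k)$ in the odd case; both are immediate from the definitions.
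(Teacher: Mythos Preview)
Your proof is correct and follows essentially the same approach as the paper: establish $\P_j\subset\W_j$, express $L_{W_\beta}(m)$ as the quadratic form $c^TD_jc$, and deduce strict positivity from positive-definiteness, then invoke \cthm{necsuff} and \cdef{IB}. Your explicit check that $c\ne0$ (via the monic normalization) and your handling of the odd case with $B(k)$ are exactly what the paper does in the paragraph preceding the theorem.
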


The first few of the matrices $D_j$ are
 \begin{gather*}
D_1=\begin{pmatrix}m_1\end{pmatrix},\qquad
D_2=\begin{pmatrix}1&m_1-1/2\\m_1-1/2&m_2-m_1\end{pmatrix},\\
D_3=\begin{pmatrix}m_1&m_2-m_1/2\\m_2-m_1/2&m_3-m_2\end{pmatrix},\\
D_4=\begin{pmatrix}1&m_1-1/2&m_2-m_1+1/2\\
   m_1-1/2&m_2-m_1&m_3-3m_2/2+m_1/2\\
   m_2-m_1+1/2&m_3-3m_2/2+m_1/2&m_4-2m_3+m_2\end{pmatrix},
 \end{gather*}
The $p,q$ entry of $D_j$, $p,q=0,\ldots,\lfloor j/2\rfloor$, is just the
corresponding entry of the Hankel matrix $C_j$, that is, $m_{p+q}$, modified
by the addition of a linear combination of lower moments.  Positive
definiteness of all the $C_j$ is necessary for I-realizability on $\bbr_+$
and hence also on $\bbn_0$ (see \clem{pd}(b)), so that we have necessary
conditions and sufficient conditions of a very similar structure. 

 For $n=1$ the necessary condition of \clem{pd}(b), the sufficient
condition of \cthm{suffthm}, and the exact condition of \cthm{IterProc} all
coincide: each is $m_1>0$.  For $n=2$, when the exact condition is from
\cthm{Case23}, they are respectively $m_2-m_1^2>0$, $m_2-m_1^2>1/4$, and
$m_2-m_1^2>\theta_1(1-\theta_1)$, where $\theta_1$ is the fractional part
of $m_1$; note that the necessary condition is also sufficient when $m_1$ is
an integer and that the sufficient condition is also necessary when it is a
half integer.

As noted in the introduction, \cthm{suffthm} may be useful in establishing
realizability for conditions of the form $m_j\ge f_j(m_1,\ldots,m_i)$, $i<j$.
 
\section{A more general realization  problem}\label{general} 

The truncated moment problem on an infinite discrete semi-bounded subset of
$\bbr$ can be solved in the same way, if we adapt our arguments.
Specifically, instead of $\mathbb{N}_0$ we consider a set
$\mathbb{M} \subset \mathbb{R}$ which is discrete and bounded below;
without loss of generality we may assume that
$0\in\mathbb{M} \subset \mathbb{R}_+$.  All the arguments presented
previously apply if one uses, instead of $\lfloor y \rfloor$ and
$\lfloor y \rfloor +1$, the largest element of $\mathbb{M}$ not
greater than $y$, which we denote $l(y)$, and the smallest element of
$\mathbb{M}$ larger than $y$, which we denote $u(y)$.

In the case $n=2$ one thus must replace the polynomial in (\ref{Y2b}) by
 \be
P_2^{(m)}(x)=(x-l(m_1))(x-u(m_1)),
\ee
and the corresponding condition (\ref{Y2}) becomes 
  \be
 m_2-m_1^2\geq (u(m_1)-m_1)(m_1 - l(m_1)).
 \ee
In the case $n=3$ one must replace similarly the polynomial in (\ref{Y3b})
by
 \be
P_3^{(m)}(x)=x(x-l(m_2/m_1))(x-u(m_2/m_1)),
 \ee
and the condition in (\ref{Y3}) becomes
  \be
 \frac{m_3}{m_1}-\left(\frac{m_2}{m_1}\right)^2\ge (u(m_2/m_1)- m_2/m_1) (m_2/m_1 - l(m_2/m_1))
 \ee
When $n=4$ (\ref{tTdefn}) and (\ref{tTdefm}) must be modified to
 \begin{eqnarray}
t_1&=&\frac{m_3-(l(y_2)+u(y_2))m_2+l(y_2)u(y_2)m_1}
   {m_2-(l(y_2)+u(y_2))m_1+l(y_2)u(y_2)m_0}\;,\\
 t_2&=&\frac{m_3-(l(y_1)+u(y_1))m_2+l(y_1)u(y_1)m_1}
  {m_2-(l(y_1)+u(y_1))m_1+l(y_1)u(y_1)m_0}\;,
 \end{eqnarray}
and then in the analogue of Theorem~\ref{R1R2n} one obtains that  
 $$\mathrm{supp}(\mu) \subset \{ l(t_1), u(t_1), l(t_2) , u(t_2)\}$$
 and that the analogue of the minimizing polynomial in (\ref{eqpoly4}) is
\be 
P_4^{(m)}(x)=(x-l(t_1))(x-u(t_1))(x-l(t_1))(x-u(t_2)).
 \ee
The generalization for $n=5$ of Theorem~\ref{R1R25} is analogous.  The
iterative procedure in Theorem~\ref{IterProc} and
Subsection~\ref{subsecind} can be adapted easily.

 \bigskip\noindent
 {\bf Acknowledgments.} The work of M.I. and T.K. was partially supported
by EPSRC Research Grant EP/H022767/1, the work of M.I. was also supported
by a Marie Curie fellowship of Istituto Nazionale di Alta Matematica
(Grant PCOFUND-GA-2009-245492) and by a Young Scholar Fund of University of Konstanz, and the work J.L.L. was supported in part
by NSF Grant DMR~1104500.  M.I. and T.K. wish to express their gratitude to
the Department of Mathematics of Rutgers University for the hospitality and
the support provided during their visits.  We thank L. A. Fialkow, R.
Peled, J. Percus, and A. Prekopa for helpful discussions.

\appendix
\section{The truncated Stieltjes moment problem\label{CFreal}}

In this appendix we consider the solution of the truncated Stieltjes moment
problem given in \cite{CF91}, with the goal of re-expressing it
in a form parallel to that given in the current paper for the truncated
moment problem on $\bbn_0$.  Thus in this appendix realizability always
refers to realizability by a measure supported on $\bbr_+$.  We say that a
realizable moment vector $m^{(n)}$ is {\it I-realizable} if it lies in the
interior of the set of realizable moment vectors, and {\it B-realizable} if
it lies on the boundary of this set.  

Now we suppose that $m^{(n)}$ is a given moment vector and let $C_i$,
$0\le i\le n$, be the corresponding Hankel matrices \eqref{hankel}.

\begin{lemma}\label{pd} (a) If $m^{(n)}$ is realizable then for all
$j\le n$, $m^{(j)}$ is realizable and $C_j\ge0$.
 \par\smallskip\noindent
 (b) The following are equivalent:
 \par\smallskip
 \myitem{(i)} $m^{(n)}$ is I-realizable; 
 \par\smallskip
 \myitem{(ii)} $m^{(j)}$ is I-realizable for all $j\le n$; 
 \par\smallskip
 \myitem{(iii)} $C_j>0$ for all $j\le n$. 
 \end{lemma}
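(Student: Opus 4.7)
The plan is to treat (a) directly from a realizing measure, and to treat (b) by cycling through the implications (i) $\Rightarrow$ (ii) $\Rightarrow$ (iii) $\Rightarrow$ (i). For (a), if $\nu$ realizes $m^{(n)}$ on $\bbr_+$ then the same $\nu$ realizes each $m^{(j)}$ with $j\le n$, and the inequality $C_j\ge0$ is precisely the content of \eqref{pdef} (when $j=2k$) or \eqref{pdef2} (when $j=2k+1$): for any $Q_k\in\bbr^{k+1}$, $Q_k^TC_jQ_k$ equals the expectation under $\nu$ of a manifestly nonnegative polynomial.

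Write $R_j\subseteq\bbr^j$ for the set of realizable moment vectors of length $j$, and let $\pi_j:\bbr^n\to\bbr^j$ denote the coordinate projection onto the first $j$ slots. For (i) $\Rightarrow$ (ii), note that any measure realizing an $n$-tuple also realizes its first $j$ moments, so $\pi_j(R_n)\subseteq R_j$; since $\pi_j$ is an open map, the $\pi_j$-image of any open neighborhood of $m^{(n)}$ inside $R_n$ is an open subset of $\bbr^j$ containing $m^{(j)}$ and sitting inside $R_j$, so $m^{(j)}$ is I-realizable. For (ii) $\Rightarrow$ (iii) I argue the contrapositive. Under (ii), $m^{(n)}$ is realizable, so by (a) each $C_j\ge0$; suppose some $C_j$ fails to be strictly positive, with nonzero null vector $v$. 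Then $Q(m'):=v^TC_j(m')v$ is a nonzero affine functional on $\bbr^j$ vanishing at $m^{(j)}$, since otherwise the polynomial $(\sum_iv_ix^i)^2$ (when $j$ is even) or $x(\sum_iv_ix^i)^2$ (when $j$ is odd) would be constant, forcing $v=0$. Hence arbitrarily small perturbations $m'$ of $m^{(j)}$ in $\bbr^j$ can achieve $Q(m')<0$; for any such $m'$, $C_j(m')$ is no longer positive semidefinite, so by (a) $m'$ is not realizable. Thus $m^{(j)}$ lies on the boundary of $R_j$, contradicting (ii).

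Finally, (iii) $\Rightarrow$ (i) is the nontrivial direction and follows from the Curto--Fialkow solution of the truncated Stieltjes moment problem \cite{CF91}: strict positivity of every Hankel matrix $C_j$, $j\le n$, places $m^{(n)}$ in the interior of the realizable cone on $\bbr_+$. The main obstacle is thus this last implication, which is where the genuine Stieltjes content enters; the openness of coordinate projections used in (i) $\Rightarrow$ (ii) and the simple functional argument in (ii) $\Rightarrow$ (iii) are essentially formal consequences of part (a).
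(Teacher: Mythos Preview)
Your proof is correct and follows the same overall architecture as the paper's: part (a) directly from \eqref{pdef}--\eqref{pdef2}, and part (b) via the cycle (i) $\Rightarrow$ (ii) $\Rightarrow$ (iii) $\Rightarrow$ (i), with (i) $\Rightarrow$ (ii) from openness of the coordinate projection and (iii) $\Rightarrow$ (i) from \cite{CF91} together with the fact that strict positive definiteness is an open condition (the paper spells out this last continuity step explicitly, whereas you absorb it into the phrase ``places $m^{(n)}$ in the interior'').

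The one local difference is in (ii) $\Rightarrow$ (iii). The paper argues inductively: assuming $C_i>0$ for $i<n$, one has $\det C_n=m_n\det C_{n-2}+(\text{terms independent of }m_n)$ with $\det C_{n-2}>0$, so if $\det C_n=0$ then decreasing $m_n$ slightly makes $\bar C_n$ indefinite and hence $\bar m^{(n)}$ non-realizable by (a). You instead note directly that $m'\mapsto v^TC_j(m')v$ is a nonconstant affine functional vanishing at $m^{(j)}$, so arbitrarily small perturbations make it negative. Both arguments are valid; yours avoids induction and does not single out a particular coordinate to perturb, while the paper's identifies the perturbation explicitly.
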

 
\begin{proof} (a) Realizability of $m^{(n)}$ trivially implies that for
$j\le n$, $m^{(j)}$ is realizable, and by Theorems~5.1 and 5.3 of
\cite{CF91} (or see \csect{intro}, equations \eqref{pdef} and \eqref{pdef2},
for a direct proof), the latter implies that $C_j\ge0$.
 \par\smallskip\noindent
 (b) First, (i) implies (ii), since I-realizability of $m^{(n)}$ implies
that for some $\epsilon>0$ all $\bar m^{(n)}$ with
$|\bar m_i-m_i|<\epsilon$ for $1\le i\le n$ are realizable, and this
implies I-realizability of $m^{(j)}$, $j\le n$.  Next, (ii) implies (iii),
since if (ii) holds then we may assume inductively that $C_j>0$ for $j<n$
and Theorem~5.1 or 5.3 of \cite{CF91} implies that $C_n\ge0$, so that we
need only show that $C_n$ is nonsingular.  But if $\det C_n=0$ then, since
$\det C_{n-2}>0$ and
 \be
\det C_n=m_n\det C_{n-2}+\text{(terms independent of $m_n$)},
 \ee
 any small perturbation $m_n\to\bar m_n<m_n$ would render the corresponding
Hankel matrix $\bar C_n$ non-positive and hence, from (a), $\bar m^{(n)}$
non-realizable, contradicting the I-realizability of $m^{(n)}$.  Finally,
(iii) implies (i), for if (iii) holds then also for some sufficiently small
perturbation $\bar m^{(n)}$ of $m^{(n)}$ the corresponding Hankel matrices
$\bar C_j$ also satisfy $\bar C_j>0$ for $j\le n$, and then Theorems~5.1
and 5.3 of \cite{CF91} imply that $\bar m^{(n)}$ is realizable.\end{proof}

\begin{proposition}\label{rt} (a) If the moment vector $m^{(n)}$ is
realizable then either (i)~$C_i>0$ for $0\le i\le n$ or (ii)~there exist an
index $j$, $1\le j\le n$, and constants $\varphi_0,\ldots,\varphi_{r-1}$, where
$r=\lfloor(j+1)/2\rfloor$, such that $C_i>0$ for $i<j$, $C_i\ge0$
with $C_i$ singular for $i\ge j$, and 
 \be\label{goal}
m_{r+k}=\sum_{i=0}^{r-1}\varphi_im_{k+i}\qquad\hbox{for $k=0,\ldots,n-r$}.
 \ee

 \par\smallskip\noindent
 (b) Conversely, if either (i) or (ii) holds then $m^{(n)}$ is
realizable. 

 \par\smallskip\noindent
 (c) In case (ii) of (a) the realizing measure is uniquely determined by
$m^{(n)}$ and its support consists of $r$ points; the support includes $0$
if and only if $j$ is odd. \end{proposition}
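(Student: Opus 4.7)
The plan is to unpack the Hankel positivity structure in tandem with the existence theorems of \cite{CF91}. For part (a), first invoke \clem{pd}(a) to get $C_i\ge0$ for every $i\le n$. If in fact $C_i>0$ for every $i\le n$ we are in case (i), so assume not and let $j$ be the smallest index with $C_j$ singular. Observe that the top-left principal submatrix of $C_j$ is $C_{j-2}$ (interpreting $C_{-1},C_{-2}$ as empty), which is positive definite by the minimality of $j$ and \clem{pd}(b). Consequently the kernel of $C_j$ is one-dimensional and any spanning vector has nonzero last coordinate; normalizing that coordinate to $1$ produces the recursion. When $j=2r$ this yields $m_{r+k}=\sum_{i=0}^{r-1}\varphi_i m_{k+i}$ for $k=0,\ldots,r$; when $j=2r-1$ the kernel relation for $B(r-1)$ takes the form $m_{r+k}=\sum_{i=1}^{r-1}\psi_i m_{k+i}$ for $k=0,\ldots,r-1$, which we rewrite as \eqref{goal} with $\varphi_0:=0$ and $\varphi_i:=\psi_i$ for $i\ge1$.

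The central obstacle is extending this initial block of equations to all $k\le n-r$ and simultaneously forcing $C_i$ to be singular for every $j\le i\le n$. My plan is an induction on $i$: given that $\mathrm{rank}\,C_i=r$ and \eqref{goal} holds through the moments entering $C_i$, adjoining one more row and column to form $C_{i+1}$ introduces exactly one new moment, and the positivity requirement $C_{i+1}\ge0$, together with the positive definite principal block already contained in $C_{i+1}$, pins this new moment uniquely to the value dictated by \eqref{goal}. This is essentially the flat extension lemma of Curto-Fialkow, and I would cite the relevant part of \cite{CF91} rather than reprove it. Consequently $\mathrm{rank}\,C_{i+1}=r$, the vector $(-\varphi_0,\ldots,-\varphi_{r-1},1,0,\ldots,0)$ remains in its kernel, and $C_{i+1}$ is singular, closing the induction and completing (a).

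For (b), case (i) is the implication (iii)$\Rightarrow$(i) in \clem{pd}(b). In case (ii), \eqref{goal} forces any realizing measure to be supported in the zero set of $p(x):=x^r-\sum_{i=0}^{r-1}\varphi_i x^i$. To exhibit a realizer, I would show that $p$ has $r$ distinct nonnegative real roots $x_1<\cdots<x_r$ and that inverting the $r\times r$ Vandermonde system $\sum_i c_i x_i^k=m_k$, $0\le k\le r-1$, produces nonnegative weights $c_i$; \eqref{goal} then propagates the match of moments up to $m_n$. The structure of the roots and the nonnegativity of the weights follow from applying Theorems~5.1 and~5.3 of \cite{CF91} to the singular Hankel data $C_j$, $C_{j+1}$, with the parity of $j$ dictating which of $A,B$ controls the positivity. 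Part (c) then falls out: uniqueness is immediate from Vandermonde invertibility once the support is forced to lie in the $r$-point zero set of $p$, while $0\in\mathrm{supp}\,\mu$ exactly when $\varphi_0=0$, which by our construction in the first paragraph happens precisely when $j$ is odd.
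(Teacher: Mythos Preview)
Your inductive step in part~(a) does not work as stated. You claim that once $\mathrm{rank}\,C_i=r$, the condition $C_{i+1}\ge0$ together with the positive-definite $r\times r$ principal block pins down the next moment and forces $\mathrm{rank}\,C_{i+1}=r$. This is false: take $n=3$, $m^{(3)}=(1,1,2)$. Here $j=2$, $r=1$, $C_2=A(1)=\bigl(\begin{smallmatrix}1&1\\1&1\end{smallmatrix}\bigr)$ has rank~$1$, and $C_3=B(1)=\bigl(\begin{smallmatrix}1&1\\1&2\end{smallmatrix}\bigr)$ is positive \emph{definite}, so $m_3$ is not forced to equal $\varphi_0 m_2=1$. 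The recursion \eqref{goal} fails at $k=2$, and indeed $(1,1,2)$ is not realizable on $\bbr_+$ (any realizing measure would have to be $\delta_1$). The flat extension lemma goes the other way: it assumes flatness and produces further flat extensions, whereas you need to \emph{deduce} flatness from positivity alone, which the example shows is impossible. The only input you extract from realizability is $C_i\ge0$ via \clem{pd}(a), and that is strictly weaker than realizability.

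The paper's proof avoids this by using the realizing measure $\nu$ directly. From $Q^T C_j Q=E_\nu[g(X)^2]=0$ (where $g(x)=x^r-\sum_i\varphi_i x^i$) one gets $\supp\nu\subset\{g=0\}$, and then $E_\nu[X^k g(X)]=0$ for \emph{every} $k\ge0$ immediately yields \eqref{goal} for all $k\le n-r$ in one stroke; singularity of the higher $C_i$ then follows because \eqref{goal} exhibits a column dependence. This measure-theoretic step is the missing ingredient in your argument, and it is also what gives (c) cleanly: the support equals (not merely lies in) the zero set of $g$, since a strictly smaller support would make some $C_i$ with $i<j$ singular. Your treatment of (b), and your observation that $\varphi_0\ne0$ for even $j$ follows from the minimality of $j$, are fine.
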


Note that by \clem{pd} the cases (i) and (ii) of (a) correspond
respectively to the I-realizability and B-realizability of $m^{(n)}$.

\begin{proof} (a,c) Suppose that $m^{(n)}$ is realizable, with realizing
measure $\nu$, but that (i) does not hold.  By \clem{pd}(a), $C_i\ge0$ for
$0\le i\le n$, and since $C_0=[1]>0$ there is an index $j$,
$1\le j\le n$, with $C_i>0$ for $0\le i<j$ and $C_j\ge0$ with $C_j$
singular.
 \par\smallskip\noindent
 {\bf Case 1: $j=2r$ even}. In this case $C_j=A(r)$ is singular but
$C_{j-2}=A(r-1)$ is not, so that $C_j$ has a null vector of the form
$Q=(-\varphi_0,\ldots,-\varphi_{r-1},1)^T$.  Let
$g(x)=x^r-\sum_{i=0}^{r-1}\varphi_ix^i$; then a computation as in
\eqref{pdef} shows that $Q^TC_jQ=E_\nu[g(X)^2]=0$, so that the support of
$\nu$ must be contained in the zero set of $g(x)$.  Indeed, the support
must be precisely this zero set and must consist of $r$ points, since
otherwise there would be a polynomial $\gt(x)$ with $\deg\gt<r$ vanishing
on the support of $\nu$, and from $E_\nu[\gt(X)^2]=0$ we could conclude,
again as in \eqref{pdef}, that $C_{2(\deg\gt)}$ was singular, a
contradiction.  Moreover, then $0\in\supp\nu$ if and only if $\varphi_0=0$,
and then with $\Qt=(-\varphi_1,\ldots,-\varphi_{r-1},1)^T$ and
$h(x)=x^{r-1}-\sum_{i=1}^{r-1}\varphi_ix^{i-1}$ we would have
$\Qt^TB(r-1)\Qt=E_\nu[Xh(X)^2]=0$, so that $B(r-1)=C_{j-1}$ would be
singular, again a contradiction. This establishes (c). Next,
 \be
 E_\nu[X^kg(X)]=m_{r+k}-\sum_{i=0}^{r-1}\varphi_im_{k+i}=0
 \qquad \hbox{for $0\le k\le n-r$},
 \ee
which verifies \eqref{goal}.  Finally, \eqref{goal} implies that if for
some $i\ge j$, $\v_0,\ldots,\v_{\lfloor i/2\rfloor}$ are the columns of
$C_i$, then $\v_l=\sum_{q=0}^{r-1}\varphi_q\v_q$ for $l\ge r$, so that
$C_i$ is singular.

 \par\smallskip\noindent
 {\bf Case 2: $j=2r-1$ odd}. The proof is similar.  Now $C_j=B(r-1)$ has a
null vector $Q=(-\varphi_1,\ldots,-\varphi_{r-1},1)^T$, and if
$h(x)=x^{r-1}-\sum_{i=1}^{r-1}\varphi_ix^{i-1}$ then
$Q^TC_jQ=E_\nu[xh(X)^2]=0$, so that the support of $\nu$ must be contained
in the zero set of $g(x)=xh(x)=x^r-\sum_{i=0}^{r-1}\varphi_ix^i$.  As
above, the support must be precisely this zero set and must consist of $r$
points, so that (c) holds.  Now \eqref{goal} follows from
$E_\nu[X^{k}g(X)]=0$, and the argument that $C_i$ is singular for
$i\ge j$ is the same.

 \par\smallskip\noindent
 (b) If (i) holds then $m^{(n)}$ is realizable by \clem{pd}.  If (ii)
holds, then according to Theorems~5.1 and 5.3 of \cite{CF91}, realizability
of $m^{(n)}$ requires positive semidefiniteness of $C_n$ and $C_{n-1}$ and
that a certain vector $\v$ lie in the range of $C_{n-1}$, where
$\v=(m_{l+1},\ldots,m_{2l})^T$ if $n=2l$ and
$\v=(m_{l+1},\ldots,m_{2l+1})^T$ if $n=2l+1$; the latter condition follows
immediately from \eqref{goal} for $k=l+1-r,\ldots,n-r$, which expresses
$\v$ as a linear combination of the last $r$ columns of $C_{n-1}$.
\end{proof}

\begin{remark}\label{support}Suppose that we are in the situation of
\cprop{rt}(a.ii). If $j=2r$ is even, and we define
$\Phi=(\varphi_0,\ldots,\varphi_{r-1})$, $M=(m_r,\ldots,m_{2r-1})$,
then $\Phi=MA(r-1)^{-1}$.  Similarly if $j=2r-1$ is odd then
$\Phit=\Mt B(r-2)^{-1}$, where
$\Phit=(\varphi_1,\ldots,\varphi_{r-1})$,
$\Mt=(m_r,\ldots,m_{2r-2})$.  These formulas permit the computation
of the $\varphi_i$, and hence of the polynomial $g(x)$ whose zeros form the
support of $\nu$, in terms of minors of $C_j$.  For example, when $j=4$ and
$j=5$ the support of $\nu$ consists respectively of the roots of
  \be\label{gCase4}
 \begin{vmatrix}m_0&m_1\\m_1&m_2\end{vmatrix}x^2
-\begin{vmatrix}m_0&m_1\\m_2&m_3\end{vmatrix}x
+\begin{vmatrix}m_1&m_2\\m_2&m_3\end{vmatrix}=0.
 \ee
 and
 \be
 \begin{vmatrix}m_1&m_2\\m_2&m_3\end{vmatrix}x^3
-\begin{vmatrix}m_1&m_2\\m_3&m_4\end{vmatrix}x^2
+\begin{vmatrix}m_2&m_3\\m_3&m_4\end{vmatrix}x=0.
 \ee
  \end{remark}

The specific consequence of \cprop{rt} needed in \csect{intro} is:

\begin{corollary}\label{needed}(a) If $m^{(n-1)}$ is B-realizable, then
$m^{(n)}$ is realizable if and only $m_n$ satisfies
 \be\label{goalx}
m_n=\sum_{i=0}^{r-1}\varphi_im_{n-r+i},
 \ee
 that is, satisfies \eqref{goal} with $r+k=n$, and
then is B-realizable.  
 \par\smallskip\noindent
 (b) If $m^{(n-1)}$ is I-realizable, then $m^{(n)}$ is
realizable if and only if $C_n\ge0$: I-realizable if $C_n>0$, B-realizable
if $C_n$ is singular.\end{corollary}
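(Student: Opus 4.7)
The plan is to derive both parts by assembling the two earlier results, \clem{pd} and \cprop{rt}, so that the corollary becomes essentially a repackaging. Throughout, I will use the characterizations: I-realizability is equivalent to $C_i>0$ for all $i\le n$, and B-realizability is equivalent to realizability together with some $C_j$ being singular (so by \clem{pd}(b) $m^{(n-1)}$ B-realizable forces $m^{(n)}$ to not be I-realizable).

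For part (a), since $m^{(n-1)}$ is B-realizable, \cprop{rt}(a) puts it in case (ii) with some $j\le n-1$ and coefficients $\varphi_0,\ldots,\varphi_{r-1}$, and \cprop{rt}(c) says there is a unique realizing measure $\nu$ for $m^{(n-1)}$, supported on the $r$ zeros of $g(x)=x^r-\sum\varphi_ix^i$. Any measure realizing $m^{(n)}$ also realizes $m^{(n-1)}$, hence equals $\nu$, so $m^{(n)}$ is realizable if and only if $m_n=E_\nu[X^n]$. Because $g$ vanishes on $\supp\nu$, we have $X^r=\sum_{i=0}^{r-1}\varphi_iX^i$ pointwise on $\supp\nu$, and multiplying by $X^{n-r}$ and integrating yields $m_n=\sum_i\varphi_im_{n-r+i}$, which is precisely \eqref{goalx}. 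Conversely, if $m_n$ satisfies \eqref{goalx}, then $\nu$ itself realizes $m^{(n)}$. B-realizability of $m^{(n)}$ then follows from \clem{pd}(b): since some $C_j$ with $j\le n-1$ is singular, $m^{(n)}$ cannot be I-realizable.

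For part (b), $m^{(n-1)}$ I-realizable means $C_i>0$ for $i\le n-1$. Necessity of $C_n\ge0$ if $m^{(n)}$ is realizable is \clem{pd}(a). For sufficiency, if $C_n>0$ then $C_i>0$ for all $i\le n$, and \clem{pd}(b) directly gives I-realizability of $m^{(n)}$. If instead $C_n\ge0$ is singular, I aim to invoke \cprop{rt}(b) in case (ii) with $j=n$. I need to produce the coefficients $\varphi_0,\ldots,\varphi_{r-1}$ and verify \eqref{goal}. Because $C_{n-2}=A(r-1)$ (respectively $B(r-1)$ depending on parity) is strictly positive while $C_n$ is singular of rank one less, the nullspace of $C_n$ is one-dimensional and admits a unique vector with last entry equal to $1$, say $Q=(-\varphi_0,\ldots,-\varphi_{r-1},1)^T$ in the even case and $Q=(-\varphi_1,\ldots,-\varphi_{r-1},1)^T$ with $\varphi_0=0$ in the odd case. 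Reading off $C_nQ=0$ row by row is exactly \eqref{goal} for $k=0,\ldots,n-r$, so \cprop{rt}(b) applies and $m^{(n)}$ is realizable; it is B-realizable by \clem{pd}(b) since $C_n$ is singular.

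The main technical point I expect to need care with is the split by parity in part (b): one must check that the null vector of $C_n$ can indeed be chosen with the claimed normalization (which rests on non-singularity of $C_{n-2}$) and that the row-by-row reading of $C_nQ=0$ delivers \eqref{goal} in exactly the range $k=0,\ldots,n-r$ required by \cprop{rt}. Once this bookkeeping is done, both parts reduce to citation. A final minor check is to make sure that in part (a) the $\varphi_i$ produced by \cprop{rt}(a) applied to $m^{(n-1)}$ are the same ones used in the realizability criterion \eqref{goalx} for $m^{(n)}$; this is automatic because the $\varphi_i$ are determined by $m^{(n-1)}$ alone via $\Phi=MA(r-1)^{-1}$ or $\Phit=\Mt B(r-2)^{-1}$ as in \crem{support}.
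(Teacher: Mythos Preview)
Your argument is correct in both parts, and for part (a) it is genuinely different from the paper's route. The paper argues algebraically: it invokes \cprop{rt}(b) in case (ii), so that realizability of $m^{(n)}$ is equivalent to \eqref{goalx} together with $C_n\ge0$ singular, and then has to verify these matrix conditions separately---singularity by the column relation coming from \eqref{goal} and \eqref{goalx}, and positive semidefiniteness by appealing to Theorem~2.4 of \cite{CF91}. You instead exploit the uniqueness of the realizing measure from \cprop{rt}(c): the only candidate for a realizing measure is $\nu$, so realizability reduces to the single equation $m_n=E_\nu[X^n]$, which you read off from $g(X)=0$ on $\supp\nu$. This is cleaner and entirely self-contained within the appendix, avoiding the extra citation.

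For part (b) your approach is essentially the same as the paper's in spirit but routed differently. The paper cites Theorems~5.1 and 5.3 of \cite{CF91} directly for sufficiency when $C_n\ge0$ is singular; you instead construct the null vector of $C_n$ (normalizable with last entry $1$ because $C_{n-2}>0$ forces any null vector to have nonzero last component), read off \eqref{goal} row by row, and then invoke \cprop{rt}(b). Since the proof of \cprop{rt}(b) itself rests on those same theorems of \cite{CF91}, the two arguments ultimately coincide; yours just makes the verification of the hypotheses of \cprop{rt}(b) explicit. The parity bookkeeping you flag is indeed routine: in the odd case $n=2k+1$ one has $r=k+1$, $C_n=B(k)$, $C_{n-2}=B(k-1)$ is the top-left principal block, and $B(k)Q=0$ with $\varphi_0=0$ delivers \eqref{goal} for $k'=0,\ldots,k=n-r$ exactly as required.
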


\begin{proof}(a) If $m^{(n-1)}$ is B-realizable then we are in case (ii) of
\cprop{rt} with $j\le n-1$; thus the $\varphi_i$ are defined and \cprop{rt}
implies that realizability of $m^{(n)}$ is equivalent to \eqref{goalx}
together with positive semidefiniteness and singularity of $C_n$. But
singularity of $C_n$ follows from \eqref{goal} (for $k+r<n$) and
\eqref{goalx}, since these show that the last column of $C_n$ is a linear
combination of the previous $r$ columns, and positive semidefiniteness of
$C_n$ follows from \eqref{goalx} and Theorem~2.4 of \cite{CF91} applied to
$\gamma=(m_0,\ldots,m_{2l-2})$ if $n=2l$ and to
$\gamma=(m_1,\ldots,m_{2l-1})$ if $n=2l+1$ and $m_1\ne0$.  Note that the
theorem assumes $\gamma_0\ne0$ and so does not apply if $m_1=0$, but in
that case $r=1$, $\varphi_0=0$, and \eqref{goalx} becomes $m_n=0$, easily
seen to be necessary and sufficient for realizability.

 \par\smallskip\noindent
 (b) If $m^{(n-1)}$ is I-realizable then $C_i>0$ for $i=1,\ldots,n-1$ by
\clem{pd}, and by the same lemma, I-realizability of $m^{(n)}$ is then
equivalent to $C_n>0$.  B-realizability of $m^{(n)}$ certainly implies that
$C_n$ be positive semidefinite and singular, by \cprop{rt}; conversely,
the latter conditions imply realizability of $m^{(n)}$, by Theorems~5.1 and
5.3 of \cite{CF91}, and  this must be
B-realizability, by \cprop{rt}.  \end{proof}

\section{Algebraic techniques\label{algebraic}}

As noted in \crem{misc}(f), some of the results of \csect{finite} may be
obtained directly from the properties of the polynomials in $\P_n$, without
reference to a realizing measure.

\begin{lemma}\label{posdir} Let $m=m^{(n)}$ be a moment vector and suppose
that $L_P(m) \geq 0$ for all $P \in \P_n$. Then (i)~$L_P(m)\geq 0$ for all
$P \in \P_{n-2}$, and (ii)~if $Q \in \P_{n-2}$ satisfies $L_Q(m)=0$, then
$L_{xQ}(m) \leq 0$.  \end{lemma}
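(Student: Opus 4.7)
The plan is to embed each $Q\in\P_{n-2}$ into a one-parameter family of polynomials in $\P_n$ by multiplying by the ``shifting pair'' factor $T_\alpha(x):=(x-\alpha)(x-\alpha-1)$, with $\alpha$ a large integer, and then extract both inequalities from the $\alpha\to\infty$ asymptotics of the hypothesis $L_{T_\alpha Q}(m)\ge 0$.

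First I would verify that $T_\alpha Q\in\P_n$ for every integer $\alpha$ exceeding the largest root of $Q$. Writing $Q=P_\gamma$ with $\gamma\in\A_{n-2}$, the new pair $(\alpha,\alpha+1)$ consists of consecutive nonnegative integers lying strictly above every $\gamma_i$, so the tuple obtained by appending $(\alpha,\alpha+1)$ to $\gamma$ still lies in $\A_n$. This covers both parities uniformly: the ``pairs'' structure for $n$ even and the ``$0$ plus pairs'' structure for $n$ odd are each preserved by adding one more consecutive pair at the top.

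By hypothesis $L_{T_\alpha Q}(m)\ge 0$, and expanding $T_\alpha(x)=x^2-(2\alpha+1)x+\alpha(\alpha+1)$ yields
\begin{equation}\label{planmain}
0\ \le\ L_{x^2Q}(m)\ -\ (2\alpha+1)\,L_{xQ}(m)\ +\ \alpha(\alpha+1)\,L_Q(m)
\end{equation}
for every sufficiently large integer $\alpha$.

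For part~(i), I would divide \eqref{planmain} by $\alpha(\alpha+1)$ and let $\alpha\to\infty$ through integers; the first two summands are $O(1/\alpha^2)$ and $O(1/\alpha)$ respectively, so the limit leaves $L_Q(m)\ge 0$. For part~(ii), the assumption $L_Q(m)=0$ collapses \eqref{planmain} to $L_{x^2Q}(m)-(2\alpha+1)L_{xQ}(m)\ge 0$; dividing by $\alpha$ and sending $\alpha\to\infty$ yields $-2L_{xQ}(m)\ge 0$, that is, $L_{xQ}(m)\le 0$. No step should be genuinely hard; the only point needing attention is the parity case check that $T_\alpha Q\in\P_n$, which is immediate from the definition of $\A_n$.
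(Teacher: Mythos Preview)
Your proof is correct and follows essentially the same route as the paper: multiply $Q\in\P_{n-2}$ by the consecutive-pair factor $(x-\alpha)(x-\alpha-1)$ for large integer $\alpha$, expand $L_{(x-\alpha)(x-\alpha-1)Q}(m)\ge0$ as a quadratic in $\alpha$, and read off (i) and (ii) from the leading and subleading coefficients. Your verification that appending a top pair keeps the root tuple in $\A_n$ is slightly more explicit than the paper's, but the argument is the same.
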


\begin{proof} Suppose that $Q\in\P_{n-2}$.  If $k$ is such that neither $k$
nor $k-1$ is a zero of $Q$ then $(x-k)(x-k-1)Q(x) \in \P_n$, so that 
 \be\label{AT1}
 L_{(x-k)(x-k-1)Q}(m) = L_{x^2Q}(m) - (2k+1)L_{xQ}(m) +k(k+1) L_Q(m)\ge0.
 \ee
 Then (i) and (ii) follow by  letting $k$ become very large in
\eqref{AT1}.  \end{proof}

\begin{proposition}\label{algebra} Suppose that $L_P(m) \geq 0$ for all
$P \in \P_n \cup \P_{n-1}$.  Then $L_P(m)\geq 0$ for all
$P \in \bigcup_{k=1}^n \P_k$, and if $L_Q(m)=0$ for $Q \in \P_k$ with
$k \leq n-2$ then $L_{x^iQ}(m)=0$ for all $i$ with $1 \leq i \leq n-1-k$ .
\end{proposition}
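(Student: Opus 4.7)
My plan is to obtain the first assertion by iterating \clem{posdir}(i), and the second by an induction on $i$ whose base case $i=1$ contains the real work. For the first assertion: \clem{posdir}(i) applied to the hypothesis on $\P_n$ yields $L_P(m)\ge0$ for $P\in\P_{n-2}$, and iterating this step gives $\P_{n-4},\P_{n-6},\ldots$; applying the lemma instead to the truncated moment vector $m^{(n-1)}$ (viewing the hypothesis on $\P_{n-1}$ as one for $m^{(n-1)}$) produces $\P_{n-3},\P_{n-5},\ldots$. The two alternating chains together cover $\bigcup_{k=1}^n\P_k$.

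For the second assertion I would first prove a strengthened base case: for every $k\le n-2$ and every $Q\in\P_k$ with $L_Q(m)=0$, one has $L_{xQ}(m)=0$. The upper bound $L_{xQ}(m)\le0$ will come from \clem{posdir}(ii) applied to the truncated moment vector $m^{(k+2)}$, whose hypothesis $L_P(m^{(k+2)})\ge0$ for $P\in\P_{k+2}$ is guaranteed by the first assertion. For the matching lower bound, let $r$ be the smallest element of $\bbn_0$ that is not a root of $Q$. I plan to show that $(x-r)Q\in\P_{k+1}$: nonnegativity on $\bbn_0$ is immediate since $Q$ vanishes on $\{0,1,\ldots,r-1\}$ while $x-r\ge0$ for $x\ge r$, and the delicate point is that the pairing rules defining $\A_k$ force $r\equiv k\pmod2$, which is precisely the parity needed for the enlarged root set $\{0,1,\ldots,r\}\cup\{\alpha_i:\alpha_i>r\}$ to fit the structure of $\A_{k+1}$. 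Once this is established, the first assertion yields $L_{(x-r)Q}(m)\ge0$, and since $L_{(x-r)Q}(m)=L_{xQ}(m)-rL_Q(m)=L_{xQ}(m)$, the desired equality $L_{xQ}(m)=0$ follows.

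To pass from the base case to general $i\le n-1-k$ I would iterate the construction, setting $Q_0:=Q$ and $Q_{\ell+1}:=(x-r_\ell)Q_\ell$ with $r_\ell$ the smallest non-root of $Q_\ell$. The same parity check, now applied to $Q_\ell$ in place of $Q$, shows $Q_\ell\in\P_{k+\ell}$ at every stage, and the identity $L_{Q_{\ell+1}}(m)=L_{xQ_\ell}(m)-r_\ell L_{Q_\ell}(m)$ combined with the base case applied to $Q_\ell$ propagates $L_{Q_\ell}(m)=0$ through the recursion, as long as $k+\ell\le n-2$. Finally, applying the base case to $Q_{i-1}\in\P_{k+i-1}$ gives $L_{xQ_{i-1}}(m)=0$; expanding $xQ_{i-1}=x^iQ+\sum_{j=0}^{i-1}c_jx^jQ$ and invoking the inductive equalities $L_{x^jQ}(m)=0$ for $j<i$ then forces $L_{x^iQ}(m)=0$.

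The only nontrivial step in this plan is the parity verification that $(x-r)Q\in\P_{k+1}$ (and its analogue for each $Q_\ell$); this is the one place where the combinatorial definition of $\A_k$ is really used, and everything else reduces to substitution and linear manipulation of the functionals $L_{\cdot}(m)$.
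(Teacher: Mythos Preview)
Your argument is correct and follows essentially the same route as the paper's proof: iterate \clem{posdir}(i) for the first assertion, then for the second combine the upper bound $L_{xQ}(m)\le0$ from \clem{posdir}(ii) with the lower bound coming from $(x-r)Q\in\P_{k+1}$ (where $r$ is the least non-root of $Q$), and propagate by repeating the construction. The only cosmetic differences are that the paper simply asserts $(x-\alpha)Q\in\P_{k+1}$ without isolating the parity check you flag, and that in place of your explicit induction on $i$ the paper observes that the polynomials $T_j=(x-r_0)\cdots(x-r_{j-1})$, $j=0,\ldots,n-k-1$, span all polynomials of degree at most $n-k-1$, whence $L_{x^iQ}(m)=0$ for $i\le n-1-k$ follows at once.
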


\begin{proof} The first statement follows from repeated application of
\clem{posdir}.  Suppose then that $Q \in \P_k$, with $k\le n-2$, satisfies
$L_Q(m)=0$.  If $\alpha$ is the smallest nonnegative integer which is not a
zero of $Q$ then $(x -\alpha)Q\in\P_{k+1}$, and hence
$L_{(x-\alpha)Q}(m)\geq0$. On the other hand, from \clem{posdir},
 \be
 L_{(x-\alpha)Q}(m) = L_{xQ}(m)-\alpha L_{Q}(m) = L_{xQ}(m) \leq 0.
 \ee
 We conclude that $L_{(x-\alpha)Q}(m)=0$. 

Thus we have constructed a linear polynomial $T_1$ with $T_1Q \in \P_{k+1}$
and $L_{T_1Q}(m)=0$.  By repeating the argument we can generate a sequence
of polynomials $T_1, \ldots, T_{n-k-1}$, with $\deg T_j=j$, such that
$T_jQ \in \P_{k+j}$ and $L_{T_jQ}(m)=0$. Because $\deg T_j=j$ it follows
that $L_{PQ}(m)=0$ for all polynomials $P$ of degree less or equal to
$n-k-1$. The conclusion follows by taking $P(x)=x^i$ with
$1 \leq i \leq n-1-k$.  \end{proof}

We see that \clem{nonzero} is an immediate consequence of
\cprop{algebra}.  Moreover, we can now relax the hypotheses of
\cthm{biglem}(a), requiring only that $L_P(m) >0 $ for $P\in\P_{n-2}$ and
$P\in\P_{n-3}$, rather than for all $P\in\P_k$ with $k\le n-2$.

\small{
 
}
\end{document}